\documentclass[11pt]{article}

\usepackage{amsmath, amsfonts, amsthm, graphicx,amssymb}
\usepackage[top=21mm, bottom=22mm, left=21mm, right=21mm]{geometry}
\usepackage{comment}
\usepackage{hyperref}
\hypersetup{
	colorlinks=true,
	linkcolor=blue,
	filecolor=magenta,      
	urlcolor=cyan,
	citecolor=blue
}
\usepackage{authblk}
\usepackage{enumitem}
\usepackage[none]{hyphenat}
\usepackage{tikz, tikz-3dplot}
\usetikzlibrary{calc, patterns,intersections, scopes}
\usetikzlibrary{positioning,arrows,shapes,decorations.markings,decorations.pathreplacing, decorations.pathmorphing,matrix,patterns}
\tikzstyle{vtx}=[circle,draw=black,fill=black,inner sep=0,minimum size=2pt,text=white,font=\footnotesize]
\usetikzlibrary{backgrounds}
\pgfdeclarelayer{bg}
\pgfsetlayers{bg,main}
\usepackage{algorithm}
\usepackage{algpseudocode}
\algrenewcommand\algorithmicrequire{\textbf{Input:}}
\algrenewcommand\algorithmicensure{\textbf{Output:}}

\newtheorem{theorem}{Theorem}[section]
\newtheorem{corollary}[theorem]{Corollary}
\newtheorem{lemma}[theorem]{Lemma}
\newtheorem{claim}[theorem]{Claim}
\newtheorem{conjecture}[theorem]{Conjecture}

\usepackage{cleveref}
\crefname{lemma}{lemma}{lemmas}
\Crefname{lemma}{Lemma}{Lemmas}
\crefname{claim}{claim}{claims}
\Crefname{claim}{Claim}{Claims}
\crefname{question}{question}{questions}
\Crefname{question}{Question}{Questions}
\crefname{conjecture}{conjecture}{conjecture}
\Crefname{conjecture}{Conjecture}{Conjecture}

\theoremstyle{definition}

\newcommand{\eps}{\varepsilon}

\DeclareMathOperator{\bx}{box}

\DeclareMathOperator{\Var}{Var}

\title{Geometric triangle-free graphs of large chromatic number}
\author{Istv\'an Tomon}
\affil{Department of Mathematics and Mathematical Statistics\\ 
Ume\r{a} University, Ume\r{a}, Sweden}
\affil{\texttt{istvan.tomon@umu.se}}
\date{}

\begin{document}

\maketitle
\sloppy

\begin{abstract}
  We present several geometric constructions of graphs with rapidly growing chromatic numbers, most of which are triangle-free or have large girth.

  \begin{enumerate}
  \setlength{\itemsep}{0pt}  
\setlength{\parsep}{0pt}
      \item We construct a triangle-free intersection graph of $n$ boxes in $\mathbb{R}^3$ with independence number $n(\log n)^{-1+o(1)}$, and thus chromatic number $(\log n)^{1-o(1)}$. This is the first improvement over the double logarithmic lower bound of Burling from 1965, and almost matches the best known upper bound $O(\log n)$. Moreover,  the bound $o(n)$ on the independence number answers a question of Walczak. 

      \item We prove that if there exists a unit distance graph in $\mathbb{R}^d$ of chromatic number $r$, then there also exists an induced unit distance graph in $\mathbb{R}^d$ of girth at least $g$ and the same chromatic number. Thus, the problem of determining the maximum chromatic number of unit distance graphs with any prescribed lower bound on the girth reduces entirely to the unrestricted problem, thereby strengthening a long line of results.

      \item We prove that there is a set of $n$ points $V\subset \mathbb{R}^2$ such that no three points are collinear, no four points are concyclic, and the integer distance graph of $V$ has chromatic number  at least $(\log n)^{1/2-o(1)}$. This gives an alternative (and considerably more elementary) solution of a recently resolved conjecture of Andr\'asfai and Erd\H{o}s.

      \item We construct an intersection graph of $n$ lines in $\mathbb{R}^3$ with girth at least $g$ and chromatic number $\Omega_g((\log n)^{1-o(1)})$. This quantitatively improves a construction of Davies.

      \item We construct a set of $n$ circles in the plane such that the tangency graph of the circles has girth at least $g$ and chromatic number $\Omega_g((\log n)^{1-o(1)})$. This quantitatively improves the result of Davies, Keller,  Kleist, Smorodinsky, and Walczak, and provides an alternative solution of Ringel's circle
problem.

      \item We construct triangle-free ordered graphs on $n$ vertices avoiding a fixed ordered path of length three as an induced subgraph, and having chromatic number $n^{\Omega(1/\log \log n)}$. This is motivated by ordered analogues of the Gy\'arf\'as-Sumner conjecture.
  \end{enumerate}

\medskip
   \noindent
   \textbf{Keywords.} triangle-free, independence number, chromatic number, box intersection

   \noindent
   \textbf{MSC2020.} 05C10, 05C15
\end{abstract}

\section{Introduction}
One of the first surprising results a student might learn in a Graph Theory course is the existence of graphs which do not contain a triangle, yet have arbitrarily large chromatic number. The first constructions of families of such graphs date back to Tutte \cite{Tutte} in 1947, and independently Zykov \cite{Zykov}. Since then, many fundamentally different such families have been constructed with various methods, such as Mycielski graphs \cite{Myc}, shift graphs \cite{shift}, Burling graphs \cite{Burling}, certain Kneser graphs \cite{Kneser}, and the list goes on. Moreover, due to randomized constructions \cite{random}, we know that there exist $n$-vertex triangle-free graphs with chromatic number as large as $n^{1/2-o(1)}$. An even harder problem is to construct graphs with large \emph{girth} (length of the shortest cycle) and large chromatic number, whose existence also follows from probabilistic methods \cite{random0}.

However, it is believed that triangle-free (or large girth) graphs with large chromatic number should not appear ``naturally''. There is a large body of work studying the phenomenon that if a triangle-free graph represents geometric configurations, or satisfies certain local restrictions, then its chromatic number is small. In 1961, Berge coined the term \emph{perfect graph}, which refers to graphs where the clique number coincides with the chromatic number in every induced subgraph. In particular, triangle-free perfect graphs are 2-colorable, or equivalently, bipartite. Many extensively studied families of graphs are perfect, such as interval graphs, comparability graphs, and cographs. Later, the fundamental work of Gy\'arf\'as \cite{Gyarfas} introduced a relaxation of perfection. A family of graphs $\mathcal{G}$ is called \emph{$\chi$-bounded}, if there is a function $f:\mathbb{N}\rightarrow\mathbb{N}$ such that every member of $\mathcal{G}$ of clique number $k$ has chromatic number at most $f(k)$. Examples of $\chi$-bounded (but not perfect) families include the family of intersection graphs of axis-parallel rectangles in the plane \cite{AG60}, and the family of graphs avoiding a path of fixed length $\ell$ \cite{Gyarfas}. See the survey of Scott and Seymour \cite{SS} for a general reference.

On the other hand, there is an extent to which ``natural'' triangle-free and large girth families have small chromatic number, and there is great interest in constructing counterexamples with various additional constraints. The goal of our paper is to present several such constructions:
\begin{enumerate}
\setlength{\itemsep}{0pt}  
\setlength{\parsep}{0pt}
    \item triangle-free intersection graphs of boxes in $\mathbb{R}^3$ and higher dimensions of polylogarithmic chromatic number,
    \item induced unit distance graphs in $\mathbb{R}^d$ with arbitrarily large girth and exponential-in-$d$ chromatic number,
    \item integer distance graphs of points in strong general position of polylogarithmic chromatic numbers,
    \item intersection graphs of lines in $\mathbb{R}^3$ of arbitrarily large girth and polylogarithmic chromatic number,
    \item tangency graphs of circles of  arbitrarily large girth and polylogarithmic chromatic number,

    \item triangle-free ordered graphs avoiding a simple ordered path and having almost polynomial chromatic number.
\end{enumerate}

\bigskip
\noindent
\textbf{AI declaration.} The constructions presented in this paper were found by ChatGPT Astra after guidance from the author. We aim to present the constructions in a (hopefully) informative and human-digestible manner.

\subsection{Box-intersection graphs}

One of the starting points of the study of $\chi$-boundedness can be traced back to 1960, when Asplund and Gr\"unbaum \cite{AG60} proved that the family of intersection graphs of axis-parallel rectangles in the plane is $\chi$-bounded with $\chi$-bounding function $f(k)=O(k^2)$. Here, given a family $\mathcal{C}$ of sets, its \emph{intersection graph} is the graph whose vertices are the elements of $\mathcal{C}$, and there is an edge connecting two members if their intersection is non-empty. Demonstrating the surprising complexity of rectangle intersection graphs, it is still open whether there is a linear $\chi$-bounding function $f(k)=O(k)$, with the best known bound $f(k)=O(k\log k)$ being established by Chalermsook and Walczak \cite{CW20}.

In contrast, a celebrated construction of Burling \cite{Burling} from 1965 shows that the family of intersection graphs of boxes in $\mathbb{R}^3$ (here and later, boxes are always axis-parallel) is not $\chi$-bounded. \emph{Burling graphs} are an infinite sequence of triangle-free graphs, where the chromatic number of each $n$-vertex member is $\Theta(\log \log n)$. In his PhD thesis, Burling \cite{Burling} proved that each member of this sequence is realizable as an intersection graph of boxes in $\mathbb{R}^3$. Since then, many more applications of Burling graphs have been found \cite{PTI,PTII,PTIII}. It turns out, before one attempts to prove the $\chi$-boundedness of a family, it is advised to test whether the family contains Burling graphs.  

Thus, this settles the question whether the family of intersection graphs of boxes in $\mathbb{R}^3$ (and higher dimensions) is $\chi$-bounded. However, it remained an intriguing problem to determine how fast  the chromatic number can grow given the number of elements $n$. A simple divide-and-conquer argument shows that if $G$ is an $n$-vertex triangle-free intersection graph of boxes in $\mathbb{R}^3$, then the chromatic number satisfies $\chi(G)=O(\log n)$, see e.g. \cite{TomBox}. In contrast, Burling graphs show that one can achieve a lower bound of $\Omega(\log \log n)$, leaving an exponential gap. The lower bound has not been improved since the 1960's, but we \cite{TomSemilin} suspected for a long time  that the upper bound is closer to the truth. 

Here, we close the gap almost completely, by proving an even stronger statement. One way to show that a graph $G$ has large chromatic number $\chi(G)$ is to show that it has small independence number $\alpha(G)$, as $\chi(G)\geq n/\alpha(G)$. Walczak \cite{Walczak} asked whether there exist triangle-free intersection graphs of $n$ boxes in $\mathbb{R}^3$ with independence number $o(n)$, a question also repeated at the Oberwolfach Workshop on Discrete Geometry \cite[Problem 19]{OBF}. We answer this question by showing that there exist such graphs.

\begin{theorem}\label{thm:box_dim3}
For infinitely many $n$, there exists a triangle-free intersection graph of $n$ boxes in $\mathbb{R}^3$ with independence number $O(\frac{n\log \log n}{\log n})$, and thus chromatic number $\Omega(\frac{\log n}{\log \log n})$.
\end{theorem}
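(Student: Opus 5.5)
We construct the graph explicitly as a "product-like" family of boxes in which each coordinate direction plays a distinct role. Fix parameters $k$ (number of levels) and $m$ (branching), and let the vertex set be indexed by sequences $x=(x_1,\dots,x_k)\in[m]^k$ together with a level $i\in[k]$, so $n\approx k m^k$ and $\log n\approx k\log m$. We aim for $m\approx \log n$ up to constants, so that $k\approx \log n/\log\log n$. For the vertex $(x,i)$ we take a box whose $z$-extent encodes the level $i$ and whose $x$- and $y$-extents encode the prefix $(x_1,\dots,x_{i})$ via nested intervals. Concretely, we use an $m$-adic subdivision of $[0,1]$, as in a Burling/shift-graph hybrid: the first coordinate is the interval of the prefix $x_{\le i-1}$, and the second coordinate is a thin slab placed at position $x_i$ inside the cell of that prefix.

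We want two boxes $(x,i)$ and $(y,j)$ with $i<j$ to intersect roughly iff $y$ extends the prefix of $x$ in a specific "shifted" way: $x_{\le i}$ agrees with $y_{\le i}$ except for a prescribed relation at position $i$, say $y_i>x_i$. The $z$-coordinate is used to cut out all intersections between boxes of the same level, and the nesting prevents intersections among unrelated prefixes.

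The proof then has two parts. \textbf{Triangle-freeness.} Given three pairwise intersecting boxes at levels $i<j<l$, the nesting in the first coordinate forces all three to share the prefix up to $i$. We then verify that the "shift" constraints at positions $i$ and $j$ are incompatible. This mirrors the argument that shift graphs are triangle-free, and the geometric encoding ensures that only these intended edges occur. \textbf{Independence number.} Let $I$ be independent. For each level $i$, the vertices of $I$ at level $i$ sitting over a fixed prefix form, along coordinate $x_i$, a set that must avoid edges to higher levels. We assign weights and use an averaging or entropy argument. Choose a uniformly random leaf sequence $x\in[m]^k$ and look at the $k$ vertices $(x,1),\dots,(x,k)$ along it, together with their "siblings". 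The independence condition forces, along a random path, the density of $I$ to drop by a factor of roughly $(1-1/m)$-type terms. Alternatively, $I$ can occupy at most a $1/k$-fraction plus an $O(1/m)$ error at each level. Balancing the two terms gives $\alpha\le n(1/k+1/m)$, which is $O(n\log\log n/\log n)$ when $m=\log n$ and $k=\log n/\log\log n$.

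The main obstacle is designing the geometric realization so that the intersection graph is \emph{exactly} the intended triangle-free graph, with no spurious intersections, while still having the dense "every vertex sees many higher-level vertices" structure needed for the independence bound. Shift graphs themselves are not box-intersection graphs in $\mathbb{R}^3$, so the edge relation must be weakened to one realizable with three interval coordinates. I would first try to make each coordinate responsible for one inequality: nesting for the prefix, order for $x_i<y_i$, and level for $i<j$. I would then check the triangle condition case by case.
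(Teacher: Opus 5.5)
There is a genuine gap. The edge relation you describe is not triangle-free, and the box encoding you sketch cannot realize the graph that does work.

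\textbf{The stated relation has triangles.} You ask that, for $i<j$, $(x,i)\sim(y,j)$ iff $x_{<i}=y_{<i}$ and $y_i>x_i$. Take $x=(0,0,0,\dots)$, $y=(1,0,0,\dots)$ and $z=(1,1,0,\dots)$. Then $(x,1)\sim(y,2)$, $(x,1)\sim(z,3)$ and $(y,2)\sim(z,3)$. So the ``shift constraints at positions $i$ and $j$'' are perfectly compatible, and you get a triangle.

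\textbf{The box encoding loses information.} Your boxes depend only on the prefix $x_{\le i}$, so $(x,i)$ and $(x',i)$ with equal prefixes get identical boxes. If they are distinct vertices, they form large cliques. Otherwise the graph collapses to an $m$-ary tree. There the deepest level is independent, since same-level boxes are separated by your $z$-coordinate, and it contains all but an $O(1/m)$-fraction of the vertices.

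\textbf{The independence bound is not derived.} The bound $\alpha\le n(1/k+1/m)$ is asserted (``averaging or entropy argument'') rather than proved.

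\textbf{The missing idea.} The coordinate in which the two vectors differ must be indexed by the \emph{higher} level, and the vectors must agree in every other coordinate, not just on a prefix. With $Q=\{0,\dots,s-1\}^r$, the paper puts, for $i<j$, $(x,i)\sim(y,j)$ iff $x$ and $y$ differ only in coordinate $j$ and $x_j<y_j$.

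\emph{Triangle-freeness.} A triangle $(x,i),(y,j),(z,l)$ with $i<j<l$ would give $x_j<y_j=z_j=x_j$.

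\emph{Independence.} Let $I_i=\{x:(x,i)\in I\}$. For $i<j$, every line of $Q$ in direction $j$ meets $I_i\cap I_j$ in at most one point. Hence $|I_i\cap I_j|\le s^{r-1}$, and $|I|\le s^r+\binom{r}{2}s^{r-1}$. Taking $s=r^2$ and $n=rs^r$, this is at most $2n/r$. Your target $1/k+1/m$ is actually attainable for this graph: bound $|I_j\cap(I_1\cup\dots\cup I_{j-1})|\le s^{r-1}$ instead, which gives $|I|\le s^r+(r-1)s^{r-1}$.

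\emph{Realization.} Adjacency requires agreement both before and after coordinate $j$, so prefix nesting alone can never realize it. The paper uses two opposite digit encodings, with interval lengths scaled by the level.
\begin{itemize}
\item The first two coordinates use the lexicographic value $\bar x=\sum_h x_hb^{r-h}$: $X_{x,i}=[2\bar x,2\bar x+b^{r-i}]$ and $Y_{x,i}=[2\bar x-(2x_i+1)b^{r-i},2\bar x]$. Together they force the first differing index to be at least $j$.
\item The third coordinate uses the reversed-digit value $\hat x_{[i]}$ of the suffix $x_{>i}$: $Z_{x,i}=[2\hat x_{[i]}-2b^{i-1},2\hat x_{[i]}+(2x_i-1)b^{i-1}]$. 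It forces the last differing index to be at most $j$, with $x_j<y_j$.
\end{itemize}
The $Z$-coordinate also separates boxes sharing the same vector $x$. The $X$- and $Z$-coordinates jointly separate distinct boxes of the same level.
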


We also extend our construction to higher dimensions. We note that a divide-and-conquer argument shows that the chromatic number of a triangle-free intersection graph of $n$ boxes in $\mathbb{R}^d$ is at most $O((\log n)^{d-2})$, see e.g. \cite{TomBox}. However, the best known lower bound $\Omega(\log \log n)$ persisted in every fixed dimension. While we cannot quite match the upper bound, we present the following improvement over the lower bound.

\begin{theorem}\label{thm:box_dimd}
    For every positive integer $h$,  $d=6h-3$ and infinitely many $n$, there exists a triangle-free intersection graph of $n$ boxes in $\mathbb{R}^d$ of independence number $n(\log n)^{-h+o(1)}$, and thus chromatic number $(\log n)^{h-o(1)}$.
\end{theorem}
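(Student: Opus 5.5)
The plan is to obtain Theorem~\ref{thm:box_dimd} from the three-dimensional construction of Theorem~\ref{thm:box_dim3} by a product-type amplification. Each extra factor of $(\log n)^{-1}$ in the independence ratio will cost six more coordinates, which explains $d=6h-3$: the base case $h=1$ uses $3$ dimensions, and each step adds $6$. I would isolate from the proof of Theorem~\ref{thm:box_dim3} a more structured statement, valid for all large $m$. It should give a triangle-free family $\mathcal{B}_m$ of $N$ boxes in $\mathbb{R}^3$, where $\log N=(\log m)^{1+o(1)}$, such that every set of more than $N(\log m)^{-1+o(1)}$ boxes spans an edge. In addition, the boxes should come with a bipartition-like labelling (a ``top'' and ``bottom'' layer, or a vertex/edge incidence structure) that is compatible with gluing.

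For the induction step I would use a lexicographic-type substitution. Start with the family $\mathcal{F}_{h-1}$ in $\mathbb{R}^{6h-9}$ and fix a box $B\in\mathcal{F}_{h-1}$. Replace $B$ by a copy of a fresh $3$-dimensional Theorem~\ref{thm:box_dim3} family living in three new coordinates. Use a further three coordinates to ``switch off'' unwanted intersections: copies attached to adjacent boxes of $\mathcal{F}_{h-1}$ should meet only in a controlled matching-like pattern, so that no triangle is created. Concretely, the new box attached to the pair (old box $B$, inner box $C$) is $B\times C\times D$. Here $D$ is an interval box in the last three coordinates, chosen, for instance, as a shift-graph-type or Burling-type gadget. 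Its job is to ensure that when $B\sim B'$, the inner copies intersect along a bipartite pattern.

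The independence bound then follows by a standard weighted averaging (fractional) argument. Let $I$ be an independent set. For each $B$, the fibre $I_B$ is independent in the inner copy, so it has size at most $(\log m)^{-1+o(1)}$ times the inner size. In addition, the set of $B$ with a large fibre must be nearly independent in $\mathcal{F}_{h-1}$, since the gadget forces an edge between two large fibres over adjacent $B,B'$. Combining the two losses multiplies the densities, giving $(\log n)^{-h+o(1)}$, provided the parameters are balanced so that $\log n$ is comparable to $\log$ of each level's size up to $(\log n)^{o(1)}$ factors.

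I expect the main obstacle to be the design of the connecting gadget in the extra coordinates. It must simultaneously (i) keep the whole graph triangle-free, since triangles can mix outer edges with inner edges, and (ii) guarantee that large fibres over adjacent outer boxes are forced to be adjacent, which is a density-type (supersaturation) requirement rather than a mere existence one. My first attempt would be to let the gadget realise a complete bipartite-free ``random-like'' incidence between the two inner copies, namely a bipartite box graph of girth at least $6$ and high density. Such a graph is realisable with $3$ coordinates via a point-box incidence structure, and it would make triangles through two fibres impossible while giving edges between any two dense subsets.
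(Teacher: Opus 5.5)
\textbf{There is a genuine gap.} The induction step rests entirely on the connecting gadget $D$ in three extra coordinates. You never construct it, and the properties you ask of it do not fit together.

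\emph{Triangles through two fibres.} Such a triangle has the form $(B,C_1),(B,C_2),(B',C_3)$, where $C_1C_2$ is an inner edge and $C_3$ is a common cross-neighbour of $C_1$ and $C_2$. The girth of the bipartite cross pattern does not rule this out: any vertex of cross-degree at least $2$ yields a path $C_1C_3C_2$. What you actually need is that every cross-neighbourhood be independent in the inner graph, which is a condition coupling $D$ to the inner family. A matching-like pattern does achieve this. But then your density step fails, because an independent set in one fibre has only as many cross-neighbours as it has elements, so large independent sets in the two fibres can easily avoid each other.

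\emph{The density step.} For the densities to multiply, you need the following. Whenever $BB'$ is an outer edge, any independent sets in the two fibres of size about $\alpha_{\mathrm{in}}/\rho_{h-1}$ must span a cross edge. This threshold is a factor $\rho_{h-1}$ below the inner independence number. It must hold for all outer edges simultaneously, with a single family $\{D_{B,C}\}$ in $\mathbb{R}^3$. Moreover, adjacency of $B\times C\times D$ is the conjunction of three intersection conditions, so the $D_{B,C}$ within one fibre must also meet along every inner edge, or inner edges are lost. The suggested realisation by a dense, girth-$6$ point--box incidence graph in $\mathbb{R}^3$ is not available in the dense sense: $K_{2,2}$-free point--box incidence graphs in fixed dimension have only $n(\log n)^{O(1)}$ edges. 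In any case, no such supersaturation is proved. So the claim that combining the two losses multiplies the densities is unsupported. The proposal reduces the theorem to an object whose requirements are at least as delicate as the theorem itself.

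\textbf{The idea you are missing: the paper does not induct.} It takes $k=2h-1$ copies of the three-dimensional realisation $\{B_{x,i}\}$ of the graph $G$ on $Q\times[r]$, where $Q=\{0,\dots,s-1\}^r$. It forms the product boxes $\prod_{t=1}^{k} B_{\mathbf{x}_t,\mathbf{i}_t}\subset\mathbb{R}^{3k}$, whose intersection graph is the strong power $G^{\boxtimes k}$.

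Triangles are killed by restricting the label tuples $\mathbf{i}$ to a three-separating set $R\subset[r]^k$, not by a gadget. Vertices with equal label tuples are non-adjacent. Three distinct tuples are pairwise distinct in some coordinate, and there triangle-freeness of $G$ applies.

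The independence bound is not multiplicative either. For an independent set $I$, the fibres $I_{\mathbf{a}}\subset Q^k$ over distinct labels overlap in at most $ks^{rk-1}$ points. This follows from the sign-pattern lemma (\Cref{lemma:multidim}) applied to a pattern with at most $k$ non-zero entries. Hence $\alpha\le s^{rk}+k|R|^2s^{rk-1}$, which is a ratio of order $1/|R|$ when $s=k|R|^2$.

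The exponent $h$ comes from the size $|R|=r^{h-o(1)}$ of three-separating families in $[r]^{2h-1}$ (Shangguan--Ge), together with $\log n=O_k(r\log r)$. This, and not ``three plus three coordinates per level'', is why $d=3(2h-1)=6h-3$.
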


These constructions have further significance. Given a collection of $n$ boxes $\mathcal{B}$ in $\mathbb{R}^d$, the \emph{piercing number} (also known as \emph{transversal} number) of $\mathcal{B}$ is the minimum size of a set that intersects all members of $\mathcal{B}$, and is denoted by $\tau(\mathcal{B})$. Clearly, the piercing number is at least the independence number of $\mathcal{B}$ (referring to the independence number of the intersection graph), which we denote by $\alpha(\mathcal{B})$. While the piercing number can be strictly larger than the independence number, a celebrated conjecture of Wegner \cite{Wegner} from 1965 proposed that in dimension $d=2$, we have $\tau(\mathcal{B})\leq 2\alpha(\mathcal{B})-1$. After 60 years, this conjecture was disproved recently by Ajwani, Gajjala, Raman, Ray \cite{AGRR}, see also Bal \cite{Bal} for subsequent developments. On the other hand, it remains open whether $\tau(\mathcal{B})=O(\alpha(\mathcal{B}))$, with the best known general bound $\tau(\mathcal{B})=O(\alpha(\mathcal{B})(\log\log \alpha(\mathcal{B}))^2)$ following from a work of Correa, Feuilloley, P\'erez-Lantero, and Soto \cite{CFPS}.

In dimension $d$, the same relation $\tau(\mathcal{B})=O_d(\alpha(\mathcal{B}))$ was believed to be true \cite{Eckhoff}. However, for $d\geq 3$, we \cite{TomBox}  recently disproved this by a configuration of boxes with $\alpha(\mathcal{B})=k$ and $\tau(\mathcal{B})\geq \Omega(k(\log k/\log \log k)^{d-2})$ for every $k$. This separation between the independence and piercing numbers is also optimal up to  double logarithmic factors. The construction in \Cref{thm:box_dim3} also achieves the same separation for $d=3$, and it is fundamentally different from the construction presented in \cite{TomBox}. Indeed, if $G$ is the intersection graph of the collection of boxes $\mathcal{B}$, then $\tau(\mathcal{B})\geq |\mathcal{B}|/\omega(G)$. Therefore, if $G$ is triangle-free, we have $\tau(\mathcal{B})\geq |\mathcal{B}|/2$. Thus, \Cref{thm:box_dim3} ensures the existence of a set of $n$ boxes $\mathcal{B}$ in $\mathbb{R}^3$ such that $k:=\alpha(\mathcal{B})\leq O(n\log \log n/\log n)$ and $\tau(\mathcal{B})\geq n/2\geq \Omega(k\log k/\log \log k)$.

\subsection{Unit distance graphs}

Given a set of points $X\subset \mathbb{R}^d$, the \emph{induced unit distance graph} of $X$ is the graph whose vertices are the elements of $X$, and two vertices are joined by an edge if their distance is 1. A unit distance graph is any subgraph of an induced unit distance graph. Determining the maximum chromatic number of planar unit distance graphs is the celebrated Hadwiger-Nelson problem. The answer is known to be either $5,6,$ or $7$, where 5-chromatic unit distance graphs were first constructed not that long ago by De Grey \cite{deGrey}.

The chromatic numbers of unit distance graphs in higher dimensions are also extensively studied. A breakthrough of Frankl and Wilson \cite{FW} from 1981 showed that there are unit distance graphs in $\mathbb{R}^d$ with chromatic number growing exponentially as a function of $d$. The current record $\chi(G)\geq (1.239+o(1))^d$ is due to  Ra\u{\i}gorodski\u{\i} \cite{Raig}, while the best known upper bound $(3+o(1))^d$ is due to Larman and Rogers~\cite{LR}. 

In 1975, Erd\H{o}s \cite{ErdUnit} asked whether there exist unit distance graphs in the plane with girth at least 4 and chromatic number at least 4. In what follows, it is important to highlight the distinction between induced and non-induced unit distance graphs, as forcing large girth in the former setting turns out to be a significantly harder task.  The question of Erd\H{o}s was answered positively by Wormald \cite{Wormald}, who constructed a triangle-free 4-chromatic induced unit distance graph on more than 6000 vertices. Erd\H{o}s \cite{ErdUnit} also asked whether there exist unit distance graphs with chromatic number 4 and arbitrarily large girth. This was answered in the affirmative by O'Donnell \cite{ODonnell1,ODonnell2}. Higher dimensional analogues of this problem were first considered by Ra\u{\i}gorodski\u{\i} \cite{Raig0} and Rubanov \cite{Rubanov}. Kupavskii \cite{Kupavskii} proved that for every $g>0$ there exists $\delta=\delta(g)>0$ such that there are unit distance graphs of girth at least $g$ and chromatic number at least $(1+\delta+o(1))^d$ in $\mathbb{R}^d$; see also Alon and Kupavskii \cite{AK} for further discussion. Buci\'c and Davies \cite{BD} improved this by showing the existence of unit distance graphs in $\mathbb{R}^d$ of chromatic number at least $(1.074+o(1))^d$ and arbitrarily large fixed girth $g$. Buci\'c and Davies \cite{BD} point out that their construction does not give induced unit distance graphs, and ask whether analogous high-girth, high-chromatic constructions exist in the induced setting.

We not only answer this question, but simultaneously improve all previous results by the following simple theorem, which says that every chromatic number that can be achieved by an (induced) unit distance graph in $\mathbb{R}^d$ can also be achieved by a large girth induced unit distance graph in the same dimension.

\begin{theorem}\label{thm:main_unitdist}
    Let $d\geq 2$, and assume that there exists a unit distance graph in $\mathbb{R}^d$ of chromatic number~$r$. Then for every $g$ there exists an induced unit distance graph of chromatic number $r$ and girth at least $g$ in $\mathbb{R}^d$.
\end{theorem}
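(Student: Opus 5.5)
The plan is a hypergraph amplification argument in the spirit of the Erdős–Hajnal / Nešetřil–Rödl constructions of high-girth, high-chromatic uniform hypergraphs, combined with a generic-rotation trick so that the union of many copies is an \emph{induced} unit distance graph.

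Since $\chi$ is attained on a finite subgraph, we may fix a finite unit distance graph $H$ in $\mathbb{R}^d$ with $\chi(H)=r$, say on $k$ vertices, realized by points $p_1,\dots,p_k$.

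\textbf{Step 1: a combinatorial skeleton.} Let $N$ be large. By the classical theorem of Erdős–Hajnal (probabilistic, or Lovász/Nešetřil–Rödl constructively), there is a $k$-uniform hypergraph $\mathcal{F}$ with the following two properties:
\begin{itemize}
\item every edge carries a fixed bijection to $V(H)$;
\item the hypergraph has girth larger than $g$, so any two edges share at most one vertex and there are no Berge cycles of length less than $g$.
\end{itemize}
It should also have the property that every $(r-1)$-coloring of its vertices contains an edge $e$ on which the coloring, transported via the bijection, is a proper coloring of $H$ \emph{fails}. In other words, some edge of $H$ inside the copy $e$ is monochromatic.

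This is the standard ``partite amalgamation'' statement: the graph $G=\bigcup_{e}H_e$, where $H_e$ is a copy of $H$ placed on $e$, has girth at least $g$ except for cycles inside a single copy. Its chromatic number is at least $r$, because any $(r-1)$-coloring restricts to a proper coloring of some $H_e$, which is impossible. The existence of such an $\mathcal{F}$ with girth $>g$ and no $(r-1)$-coloring that is proper on every $H_e$ follows from the usual random-hypergraph deletion argument. I would invoke it as known; it is the Nešetřil–Rödl ``sparse partite construction.''

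\textbf{Step 2: removing short cycles inside copies.} Cycles inside a single copy $H_e$ remain, so I would first replace $H$ by a large-girth graph. The intended route is iterative: take $H$ itself of girth at least $3$, and apply the construction with $\mathcal{F}$ indexed by \emph{edges}. Each hyperedge is then a pair carrying a copy of a single unit segment, and the chromatic requirement is coded differently.

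The cleaner route is to let the hyperedges be copies of $H$ while still requiring girth $>g$ in the resulting graph. This is impossible if $H$ has short cycles. So instead I would use the Ramsey-type statement of Nešetřil–Rödl for graphs: for every $H$ and $g$ there is a graph $G$ of girth $>g$ together with a family of homomorphisms $f_e:G|_e\to H$ such that every $(r-1)$-coloring of $G$ is non-proper. Concretely, I would take $G$ of girth $>g$ and a homomorphism $\phi:G\to H$ such that $\chi(G)\ge r$. Such a $G$ exists by Erdős–Hajnal, e.g.\ a high-girth graph $G$ with $\chi(G)\ge r$ that maps homomorphically to $H$. This follows from the result (Nešetřil–Rödl, Müller) that for each $H$ there are high-girth graphs with $\chi=\chi(H)$ admitting a homomorphism to $H$.

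\textbf{Step 3: geometric realization.} Given $G$ and the homomorphism $\phi:G\to H$, I would place each vertex $v$ at $x_v=p_{\phi(v)}$, but this merges points. So I would realize $G$ edge by edge: each edge $uv$ maps to a unit edge $p_{\phi(u)}p_{\phi(v)}$. I would then build positions $x_v\in\mathbb{R}^d$ with $|x_u-x_v|=1$ for every $uv\in E(G)$ by choosing generic orthogonal transformations along the tree-like high-girth structure.

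The main obstacle is precisely this step. We need $|x_u-x_v|=1$ for all edges of $G$ while ensuring \emph{no} other pair is at distance $1$ and all points are distinct. The plan is an algebraic-genericity argument:
\begin{itemize}
\item parametrize realizations by a real algebraic variety $\mathcal{V}$ of configurations satisfying the edge equations, which is nonempty because the homomorphism gives one (degenerate) point;
\item show that for each non-edge $\{a,b\}$ the polynomial $|x_a-x_b|^2-1$ does not vanish identically on the relevant irreducible component;
\item conclude that a generic point of that component is an induced realization.
\end{itemize}
Non-vanishing would be shown by exploiting high girth. Near $a$, the graph is a tree, so one can apply a small rotation to the branch containing $b$ about a cut structure. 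I would try to first replace $G$ by a graph built so that a spanning ``rotation freedom'' exists, for instance by using the hypergraph $\mathcal{F}$ from Step 1 with copies of $H$ as blocks. Each block then moves by an independent isometry, and the block-tree structure of a high-girth hypergraph gives the free parameters needed. Distinctness and the absence of accidental unit distances then follow from a generic choice of these isometries.
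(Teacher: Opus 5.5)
\textbf{There is a genuine gap: Step 3, the geometric realization, is not proved, and the whole proposal rests on it.}

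Steps 1--2 only give you an abstract graph $G$ of girth $>g$ with $\chi(G)=r$ and a homomorphism $\phi:G\to H$. The homomorphism yields only the degenerate configuration $x_v=p_{\phi(v)}$, in which many vertices coincide. Nothing you say shows that the variety cut out by $|x_u-x_v|=1$ for $uv\in E(G)$ contains an injective point near it. Still less does it show a point where every non-edge polynomial $|x_a-x_b|^2-1$ is nonzero.

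The local argument you sketch does not work. High girth makes balls of radius $<g/2$ tree-like, but it gives no cut vertices. For $r\ge 3$ the graph $G$ must contain cycles, all of them long, so rotating ``the branch containing $b$'' breaks edges on every long cycle through that branch. Realizability is not automatic: $K_{2,3}$ maps homomorphically onto the unit distance graph $K_2$, yet it is not a unit distance graph in $\mathbb{R}^2$. High girth removes that particular obstruction, but you give no argument that it removes all of them.

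Your fallback, gluing copies of $H$ along a high-girth hypergraph and moving each block by an independent isometry, fails twice. The blocks reintroduce the short cycles of $H$, which is exactly the obstruction you identified in Step 2. And blocks sharing vertices cannot receive independent isometries consistently around the long Berge cycles of the hypergraph. Separately, the colouring property you demand of $\mathcal{F}$ in Step 1 is vacuous, since no $(r-1)$-colouring of a copy of $H$ is proper.

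\textbf{The missing idea is to avoid realizing an abstract graph at all.} Instead, pass to induced subgraphs of an ambient graph that is already an induced unit distance graph with built-in rotational freedom.

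The paper uses the Cartesian power $H^{\square k}$ (with $H$ induced), embedded by $x\mapsto\sum_{i=1}^k R_i p_{x_i}$ with independent random rotations $R_i\in\text{SO}(d)$. Tuples differing in one coordinate keep their distance. Tuples differing in two or more coordinates almost surely neither coincide nor lie at distance $1$. Every induced subgraph of this configuration is again an induced unit distance graph, so inducedness comes for free.

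Girth is then obtained by taking a random $p$-sample with $p=k^{-3/4}$ and deleting a vertex from each short cycle. This works because $H^{\square k}$ has only $O(|V(H)|^k k^{\lfloor \ell/2\rfloor})$ cycles of length $\ell$.

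The real work is showing that $\chi\ge r$ survives the sampling. The paper does this with the container method. Any $r-1$ containers, each inducing maximum degree below $k/(r|V(H)|)$, miss at least a $1/(2|V(H)|)$ fraction of the vertices. The reason is that every slice (a copy of $H$) fully covered by them must contain an edge inside some container. A union bound over container tuples then shows the sample is robustly not $(r-1)$-colourable. The upper bound $\chi\le r$ is automatic, since $\chi(H^{\square k})=\chi(H)$.
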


This theorem has the following simple corollary based on the previously described results.

\begin{corollary}
    Let $g$ be a fixed integer.
    \begin{enumerate}
        \item There exists an induced unit distance graph in $\mathbb{R}^d$ of girth at least $g$ and chromatic number at least $$(1.239+o(1))^d.$$
        \item  There exists an induced unit distance graph in $\mathbb{R}^2$ of girth at least $g$ and chromatic number 5. 
    \end{enumerate}
\end{corollary}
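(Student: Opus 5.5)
The corollary follows by feeding known unrestricted constructions into \Cref{thm:main_unitdist}; no new argument should be needed.

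For part 1, take the unit distance graphs of Ra\u{\i}gorodski\u{\i} \cite{Raig}, which give, for each $d$, a unit distance graph $G_d\subset\mathbb{R}^d$ with $\chi(G_d)\geq (1.239+o(1))^d$. Set $r_d=\chi(G_d)$. For fixed $g$, apply \Cref{thm:main_unitdist} with $r=r_d$. This yields an induced unit distance graph in $\mathbb{R}^d$ of girth at least $g$ and chromatic number exactly $r_d\geq(1.239+o(1))^d$. The $o(1)$ term depends only on $d$, not on $g$, because the theorem preserves the chromatic number exactly.

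For part 2, take de Grey's $5$-chromatic planar unit distance graph \cite{deGrey} and apply \Cref{thm:main_unitdist} with $d=2$, $r=5$.

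The only point to check is a hypothesis. \Cref{thm:main_unitdist} asks only for a (not necessarily induced) unit distance graph of chromatic number $r$, and Ra\u{\i}gorodski\u{\i}'s graphs are typically given as subgraphs of induced unit distance graphs. Such a subgraph is exactly what the theorem allows, so no issue arises.

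If one prefers to start from the induced graph on the same point set instead, note that adding the extra unit-distance edges can only increase the chromatic number. One then applies the theorem with that chromatic number $r'\geq r$. This still gives at least $(1.239+o(1))^d$ in part 1. In part 2 it could only give $r'\geq 5$, but $r'\leq 7$ is known, and applying the theorem to the original $5$-chromatic graph gives exactly $5$.
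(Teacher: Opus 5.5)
Your proposal is correct and follows the paper's approach: the corollary is obtained by applying \Cref{thm:main_unitdist} to Ra\u{\i}gorodski\u{\i}'s graphs for part 1 and to de Grey's $5$-chromatic planar unit distance graph for part 2. Your final paragraph is unnecessary, since \Cref{thm:main_unitdist} already accepts non-induced unit distance graphs as input and returns chromatic number exactly $r$.
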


The proof of \Cref{thm:main_unitdist} is based on the following graph theoretic result, which might be of independent interest. The \emph{$k$-th Cartesian power} of a graph $H$ is the graph $H^{\square k}$, whose vertices are the $k$-tuples of vertices of $H$, and two such tuples $u$ and $v$ are joined by an edge if they differ in exactly one coordinate $i$, where $u_iv_i\in E(H)$. It was proved by Horvat and Pisanski \cite{unit_distance} that the Cartesian product of induced unit distance graphs is also an induced unit distance graph in the same dimension. See also \cite{AAS,ALS} for the same result used in Euclidean Ramsey theory. Therefore, \Cref{thm:main_unitdist} is a simple consequence of the following theorem. 

\begin{theorem}\label{thm:main_cart_prod}
     Let $H$ be a graph and $g$ be an integer. Then there exists $k=k(H,g)$ and an induced subgraph of $H^{\square k}$ of girth at least $g$ and chromatic number $\chi(H)$.
\end{theorem}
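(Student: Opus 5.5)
The upper bound is free: if $c\colon V(H)\to\mathbb{Z}_r$ is a proper colouring with $r=\chi(H)$, then $x\mapsto \sum_i c(x_i) \pmod r$ properly colours $H^{\square k}$, since along an edge exactly one summand changes, and changes to a different value. So every induced subgraph of $H^{\square k}$ has chromatic number at most $r$. The whole task is therefore to find an induced subgraph $F\subseteq H^{\square k}$ of girth at least $g$ that is not $(r-1)$-colourable.

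My plan is to import the ``sparse incomparability'' phenomenon of Ne\v{s}et\v{r}il and R\"odl: for every $H$ and $g$ there is a graph $G$ of girth at least $g$ with a homomorphism $f\colon G\to H$ and $\chi(G)=\chi(H)$. The catch is that $G$ must appear as an \emph{induced} subgraph of a Cartesian power. So rather than quoting that result, I would rerun its proof inside $H^{\square k}$.

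The underlying object is the usual Erd\H{o}s--Hajnal/Lov\'asz input: a hypergraph of large girth and large chromatic number. Concretely, I will work with a random family of ``lines'' in $V(H)^k$. A line $L(x,i)=\{x^{(i\to v)}: v\in V(H)\}$ is obtained by fixing all coordinates of $x$ except the $i$-th. The steps are as follows.
\begin{enumerate}
\item Choose a sparse random set $S\subseteq V(H)^k$ with each point taken independently with probability $p=k^{-1+\varepsilon}$, so that each point has about $k^{\varepsilon}$ neighbours in $H^{\square k}[S]$.
\item Show that the expected number of cycles of length less than $g$ in $H^{\square k}[S]$ is $o(|S|)$, and delete one vertex from each. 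Deleting vertices keeps the subgraph induced, which is exactly why this framework suits the induced requirement.
\item Show that with high probability every set $T\subseteq S$ with $|T|\geq |S|/(r-1)$ is ``rich'': it still contains, inside the remaining graph, a copy of the obstruction forcing $r$ colours.
\end{enumerate}

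Step 3 is the main obstacle. It is not enough to say that a large colour class contains an edge, since we need $r$ colours, not $3$. I would argue by a pigeonhole-over-coordinates scheme in the spirit of the proof that $\chi(H^{\square k})\geq\chi(H)$. Given an $(r-1)$-colouring $\phi$ of $F$, a colour pattern must be repeated along many parallel lines. Averaging over lines $L(x,i)$, I then look for a point set in which $\phi$ restricted to a line pulls back to a map $V(H)\to[r-1]$. Because $\chi(H)=r$, that map must be improper, which gives a monochromatic edge of $H^{\square k}$ with both endpoints in $F$.

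The difficulty is that a line meets $S$ in only about $p|V(H)|\ll 1$ points. To make this work I expect to need a density-increment/amplification argument in the style of Ne\v{s}et\v{r}il--R\"odl: replace single lines by a tree-like hypergraph of ``generalised lines'' (paths of coordinate changes realising $H$ via the homomorphism structure) and choose $k$ enormous in terms of $|V(H)|$, $g$ and $r$. I would first try to establish directly a supersaturation statement: most $(r-1)$-colourings of $S$ produce monochromatic edges in a constant fraction of these structures. Then the alteration in step 2 cannot destroy all of them, because it removes only $o(|S|)$ vertices. If this direct count fails, the fallback is to build $F$ iteratively, one colour at a time, by the classical amalgamation (partite construction) technique, using coordinates as fresh dimensions at each stage.
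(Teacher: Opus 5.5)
You have the right outer framework, and it matches the paper. The colouring $x\mapsto\sum_i c(x_i)\bmod r$ gives $\chi\le r$. A sparse random sample of $V(H)^k$, followed by deleting one vertex from every short cycle, handles the girth: $H^{\square k}$ has $O_{m,\ell}(Nk^{\lfloor \ell/2\rfloor})$ cycles of length $\ell$, where $N=m^k$ and $m=|V(H)|$, so your $p=k^{-1+\varepsilon}$ works for $\varepsilon$ small in terms of $g$.

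However, step 3 is the whole theorem, and you have not supplied it. As literally stated it is false. Take the two largest classes of the proper $r$-colouring $x\mapsto\sum_i c(x_i)\bmod r$. Together they contain at least $2|S|/r\ge |S|/(r-1)$ points of $S$ and induce a bipartite graph, so for $r\ge 3$ they contain no $r$-chromatic obstruction. What you need is that every $(r-1)$-colouring of every subset of $S$ missing only a few points has a monochromatic edge; "most colourings" is not enough. As you note yourself, such an edge cannot be found line by line, since a line meets $S$ in about $pm\ll 1$ points. A union bound over the $(r-1)^N$ colourings is also hopeless. The density-increment and partite-construction fallbacks are not specified enough to close this gap.

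The missing idea is to do the combinatorics in the dense graph $G=H^{\square k}$ and transfer it to the sample with the graph container method.
\begin{itemize}
\item Run the container algorithm with $\Delta_0=k/(rm)$. Every independent set of $G$ then lies in one of at most $\exp(2Nrm\log k/k)$ containers $C$ with $\Delta(G[C])<\Delta_0$, hence $e(G[C])\le kN/(2rm)$.
\item Apply your slice-pigeonhole idea to $(r-1)$-tuples of containers rather than to colourings of $S$. A slice (a copy of $H$) lying entirely in $C_1\cup\dots\cup C_{r-1}$ must contain an edge of some $G[C_j]$, because $\chi(H)=r$. Each edge of $G$ lies in exactly one slice, so fewer than $kN/(2m)$ slices are fully covered. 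Averaging over the $k$ directions gives $|V(G)\setminus(C_1\cup\dots\cup C_{r-1})|\ge N/(2m)$.
\item The colour classes of an $(r-1)$-colouring of an induced subgraph $G[U']$ are independent in $G$. So Chernoff plus a union bound over the $|\mathcal{C}|^{r-1}$ tuples shows that, with high probability, no $U'\subseteq U$ with $|U'|\ge |U|-pN/(4m)$ is $(r-1)$-colourable. The union bound is affordable because $p\gg r^2m^2\log k/k$.
\end{itemize}
This robustness is exactly what survives the cycle deletion. Covering by $r-1$ containers simultaneously is what lets the argument reach $\chi(H)$, rather than only the bound given by the independence ratio. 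That is the issue you correctly sensed when you said a large colour class containing an edge is not enough.
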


\subsection{Integer distance graphs}

Given a set of points $V$ in the plane, the \emph{integer distance graph} of $V$ is the graph on vertex set $V$ in which two points are adjacent if their distance is an integer.  A classical result of Anning and Erd\H{o}s \cite{AnErd} from 1945 states that there exist arbitrarily large point sets in the plane, not all lying on a line, whose integer distance graph is complete. On the other hand, every infinite set of points with this property is collinear. 

Say that a set of points is in \emph{strong general position} if no three points are collinear, and no four points are concyclic. Erd\H{o}s asked several times \cite{Erdos75,Erdos83,Erdos87} whether there exist  arbitrarily large sets of points in strong general position whose integer distance graph is complete. The largest known construction has 7 points \cite{KK}, while Ascher, Braune and Turchet \cite{ABT} proved that there is a uniform upper bound on the size of such sets conditioned on Lang's conjecture. See \cite{GIP} for further partial results.

In a similar spirit,  a question of Andr\'asfai and Erd\H{o}s \cite{Erdos97} asks whether there exists an infinite set of points in strong general position, whose integer distance graph has infinite chromatic number. It is easy to show that this is equivalent to asking whether there exist sets of points in strong general position, whose integer distance graph has arbitrarily large chromatic number. Very recently, the Star Fleet project \cite{Starfleet} announced a Lean-verified solution to this question. Their accompanying  report  \cite{Starfleet} describes a construction based on the Hales-Jewett theorem and rational circle tangency configurations.

Here, we present a more elementary argument, which also yields good quantitative bounds on the chromatic number. We prove that by simply randomly sampling the integer grid $\{1,\dots,m\}^2$ with a suitable probability $p$, the resulting set can be altered to yield a set in strong general position, whose integer distance graph has large chromatic number.

\begin{theorem}\label{thm:main_integer}
    For infinitely many $n$, there exists a set of $n$ points  $V\subset \mathbb{R}^2$ such that no three points are collinear, no four points are concyclic, and the integer distance graph of $V$ has independence number at most $O(\frac{n\log \log n}{(\log n)^{1/2}})$, and thus chromatic number at least $\Omega(\frac{(\log n)^{1/2}}{\log \log n})$.
\end{theorem}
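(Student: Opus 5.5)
The construction samples the grid $[m]^2=\{1,\dots,m\}^2$ at random with probability $p$ and then deletes the few points that violate strong general position. The integer distance graph of the grid is dense in a useful way: every Pythagorean displacement $(a,b)$ with $a^2+b^2=c^2$ gives an integer distance. The key structural input is a family of $N$-element subsets of $[m]^2$ whose integer distance graphs are complete. One choice is points on a common horizontal line, but these are forbidden by the collinearity condition after sampling. So the plan is to use \emph{many} copies of a large sparse structure and argue that every set of size $\alpha$ contains an edge. Concretely, fix a large set $D$ of primitive Pythagorean vectors $(a,b)$, with $|D|=k$. For each point $x$ and each $v\in D$, the pair $\{x,x+v\}$ is an edge. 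The integer distance graph $G_m$ of $[m]^2$ therefore contains the Cayley-type graph with generator set $D$. I want every subset of $[m]^2$ of density $\delta$ to span many edges, i.e.\ at least $\delta^2 k m^2$-ish edges (supersaturation). The cleanest route is a spectral bound: the Cayley graph on $\mathbb{Z}_M^2$ with generators $\pm D$ has degree $\sim k$. If the vectors in $D$ are chosen with distinct slopes spread out, the nontrivial eigenvalues $\sum_{v\in D}\cos(2\pi\langle\xi,v\rangle/M)$ should be at most $O(\sqrt{k\log M})$ for typical $D$, or one can instead exploit the directions directly. The expander mixing lemma then gives that any $\delta$-dense set spans about $\delta^2 k m^2$ edges once $\delta \gg \sqrt{\log m/k}$.

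Next, sample each grid point independently with probability $p$. Let $V_0$ be the sample; it has $n\approx pm^2$ points. For a fixed set $S\subset[m]^2$ of size $s=\delta m^2$ with $e(S)\ge c\delta^2 k m^2$, the probability that $S\cap V_0$ is independent and large is controlled by Janson's inequality. Rather than taking a union over $S$ directly, I would use the container method (hypergraph containers for the graph $G_m$ restricted to $D$-edges) to get a bound: with high probability every independent set of $V_0$ has size at most $O(\delta pm^2)$. The number of containers is $\exp(O(m^2\log(1/\delta)/(\delta k)))$ (here $\delta k$ plays the role of a degree-type parameter), and each container of size $\delta m^2$ has at most $2\delta pm^2$ sampled points except with probability $\exp(-\Omega(\delta pm^2))$. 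The union bound works provided $p\gtrsim \log(1/\delta)/(\delta^2 k)$.

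Then the alteration step. Degenerate configurations in $V_0$ are collinear triples and concyclic quadruples of grid points. The number of collinear triples in $[m]^2$ is $O(m^4\log m)$, and the number of concyclic quadruples is $m^{6+o(1)}$ (roughly $m^4$ centers/radii-squared pairs times the cube of the number of lattice points on circles, bounded by divisor counts). So the expected numbers surviving are $p^3m^4\log m$ and $p^4 m^{6+o(1)}$. Both must be $\ll \delta pm^2$ so that deleting one point from each bad configuration keeps $\alpha$ essentially unchanged, which holds for $p=m^{-1-\epsilon'}$-ish. Meanwhile $n\approx pm^2$ remains large, and deletions only shrink independent sets, so $\alpha(V)\le O(\delta n)$.

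The main obstacle is balancing parameters. Primitive Pythagorean vectors of length at most $m$ number about $m$, so $k$ can be as large as a power of $m$. However, the spectral or container bound needs $\delta^2 k\gtrsim 1/p$, while $p$ must be below about $m^{-1}$ to kill concyclic quadruples. That pushes $k$ above $m$, which is impossible, so a more refined choice is needed: restrict to vectors of small length $L$, so that $k\approx L$. A single point then has few neighbors, but cycles are local, and I would apply the containers argument in a multiscale way over $\log m$ dyadic scales of $L$. Each scale contributes a constant to the "degree", giving an effective $k\sim\log m$ (or $\sqrt{\log n}$ after losses). This would yield $\delta\sim \log\log n/\sqrt{\log n}$, matching the statement. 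Getting this multiscale counting to work, particularly the upper bound on concyclic quadruples among points using short Pythagorean displacements, is the step I expect to be hardest.
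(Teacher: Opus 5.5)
Your skeleton (sample $[m]^2$, control the independent sets of the sample with containers, then delete collinear triples and concyclic quadruples) matches the paper's. However, the ingredient that makes the container step work is missing, and you discarded it when you set aside collinear cliques because of the collinearity condition.

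\textbf{The Cayley graph cannot give enough edges.} Edges $\{x,x+v\}$ with $v$ in a set $D$ of primitive Pythagorean vectors can never suffice, because $|D|=O(m)$. The deletion step only permits $p\lesssim 1/(m\sqrt{\log m})$ (see below), so the sampled Cayley graph has expected degree $p|D|=O(1/\sqrt{\log m})$ and is almost edgeless. Your multiscale fix with ``effective $k\sim\log m$'' is off by a factor of $m$. By your own criterion $p\gtrsim\log(1/\delta)/(\delta^2k)$, it would need $p$ to be at least an inverse power of $\log m$.

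\textbf{The missing idea: lines in Pythagorean directions.} Every lattice line in a Pythagorean direction is a clique of $G_{[m]^2}$. Supersaturation is only needed in the full grid, so it does not matter that such collinear cliques cannot survive in the final set. Take coprime $u<v\leq m^{1/4}$ of opposite parity. The lines of direction $(v^2-u^2,2uv)$ meeting $[m]^2$ number $O(mv^2)$, and each is a clique because distances along it are integer multiples of $u^2+v^2$. By Cauchy--Schwarz, any $U\subset[m]^2$ with $|U|\geq m^{1.9}$ spans $\Omega(|U|^2/(mv^2))$ edges in that direction. Each dyadic range $t\leq v\leq 2t$ contains $\Omega(t^2)$ admissible pairs, so it contributes $\Omega(|U|^2/m)$ edges. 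Summing over the $\Theta(\log m)$ ranges gives $e(G_U)=\Omega(|U|^2\log m/m)$, hence a vertex of degree $\Omega(|U|\log m/m)$. The effective degree is therefore of order $m\log m$. This is the correct normalization of your multiscale heuristic, and the gain over the Cayley graph comes exactly from the non-primitive multiples $tv$.

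\textbf{The parameter window is essentially forced.} By Caro--Wei, $n/\alpha$ is at most the average degree plus one, which is $O(pm\log m)$ in the sample. So the target requires $p\gtrsim 1/(m\sqrt{\log m}\,\log\log m)$. On the other hand, the sample has an expected $\Theta(p^3m^4\log m)$ collinear triples, and deleting a point from each while keeping half of the roughly $pm^2$ points requires $p\lesssim 1/(m\sqrt{\log m})$. You only need at most $|U|/2$ bad configurations, not $\ll\delta pm^2$, since deleting vertices cannot increase $\alpha$.

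\textbf{Two further steps of your plan break inside this window.}
\begin{enumerate}
\item At $p=\varepsilon/(m\sqrt{\log m})$, your bound $m^{6+o(1)}$ on concyclic quadruples is useless. It would force $p\leq m^{-4/3}$, where the sample is almost edgeless. You need the Ghosal--Goenka--Keevash bound $O(m^5)$ on \emph{all} concyclic 4-tuples in $[m]^2$, not only those with short displacements.
\item Your dense-regime union bound fails. It assumes each container keeps at most $2\delta pm^2$ sampled points except with probability $e^{-\Omega(\delta pm^2)}$, so it needs $\log|\mathcal{C}|\lesssim\delta pm^2$. Even with the correct supersaturation, the available count $|\mathcal{C}|\leq\binom{m^2}{\leq\beta}$ only gives $\log|\mathcal{C}|\lesssim\beta\log m\approx m\log\log m$. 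This is far above $pm^2=\varepsilon m/\sqrt{\log m}$.
\end{enumerate}

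\textbf{How the paper closes the argument.} It runs the container algorithm with $\Delta_0=m$. Using supersaturation at every halving stage, it gets fingerprints of size at most $\beta=O(m\log\log m/\log m)$ and $|T_S|\leq L=O(m^2/\log m)$. It then sums over fingerprints weighted by $p^{|S|}$, which is valid since $S\subset I\subset U$, and uses $\mathbb{P}(|T_S\cap U|\geq 3\beta)\leq(eLp/(3\beta))^{3\beta}$. Since $pL\ll\beta$ and $pm^2/\beta=O(\sqrt{\log m})$, this gives $\alpha(G_U)\leq 4\beta$, and hence $\alpha/n=O(\log\log n/\sqrt{\log n})$.
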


\subsection{Line-intersection graphs}

A celebrated result of Pawlik, Kozik, Krawczyk, Laso\'n, Micek, Trotter, and Walczak \cite{Pawlik} shows that the family of intersection graphs of line segments in the plane is not $\chi$-bounded, thus disproving an old conjecture of Erd\H{o}s. In particular, they proved that Burling graphs can be realized as intersection graphs of line segments. Thus, there exists a collection of $n$ segments, whose intersection graph is triangle-free of chromatic number $\Omega(\log \log n)$. On the other hand, McGuinness \cite{McG} proved that the triangle-free intersection graphs of $n$ segments have chromatic number $O(\log n)$, see also \cite{FP14,RW19,Suk} for further generalizations. In contrast, if such an intersection graph has girth at least 5, then the chromatic number is bounded by an absolute constant \cite{FP14,PS}.

Pach, Tardos, and T\'oth \cite{PTT} studied the 3-dimensional version of this problem. Clearly, the family of intersection graphs of segments in $\mathbb{R}^3$ is also not $\chi$-bounded. However, in \cite{PTT}, it was asked whether the family of intersection graphs of lines in $\mathbb{R}^3$ is $\chi$-bounded. This was disproved by Norin (his argument published by Davies \cite{D21}), who showed that so-called double-shift graphs \cite{shift} can be realized as intersection graphs of lines. This provides a collection of $n$ lines, whose intersection graph is triangle-free with chromatic number $\Omega(\log\log n)$. We \cite{TomLine} constructed collections of lines that improve the bound on the chromatic number to $\Omega(n^{1/15})$, showing great contrast with the case of two-dimensional segments. Furthermore, Davies \cite{D21} proved the existence of intersection graphs of lines with arbitrarily large girth and chromatic number. His construction is based on Ramsey type results on homothetic copies of sets \cite{PV}, and thus quantitatively provides extremely poor bounds. We give an alternative construction of large girth intersection graphs of lines with polylogarithmic chromatic number.

\begin{theorem}\label{thm:main_line}
    For every positive integer $g$ and infinitely many $n$, there is an $n$-vertex intersection graph of $n$ lines in $\mathbb{R}^3$ with girth at least $g$, independence number $O_g\left(\frac{n(\log \log n)^2}{\log n}\right)$, and thus chromatic number $\Omega_g\left(\frac{\log n}{(\log \log n)^2}\right)$.
\end{theorem}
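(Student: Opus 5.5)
We realize an auxiliary abstract graph with a large-girth, small-independence-number property by lines, then delete the few short cycles. The abstract model is a random ``grid-line'' construction. Choose a prime $q$ and lines in $\mathbb{R}^3$ from a family indexed so that incidence is controlled by a simple algebraic condition. Concretely, take lines of the form $\ell_{a,b}=\{(t,\,a+bt,\,\phi(a,b,t))\}$, or more conveniently two classes of lines with each line of one class meeting each line of the other class exactly when a bilinear condition holds. The intersection graph is then an algebraically defined graph. We sample each line independently with probability $p$, and the parameters are tuned so that the expected number of cycles of length less than $g$ is at most half the expected number of vertices. Deleting one vertex from each short cycle leaves girth at least $g$ and about $n$ lines. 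This is the same random-sampling-plus-alteration scheme the paper uses for \Cref{thm:main_integer}, and in spirit for \Cref{thm:main_unitdist}.

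The key steps, in order, are as follows.
\begin{enumerate}
\item Fix a configuration of $N$ lines in $\mathbb{R}^3$, built from a regulus-type or grid-of-lines construction, and prove two properties of its intersection graph $G_0$. First, every pair of lines meets in at most one point, which is automatic for distinct lines. Second, a supersaturation property holds: every set $S$ of $\beta N$ lines spans at least $c\,\beta^{s}\,|E(G_0)|$ edges for a suitable exponent $s$. Equivalently, $G_0$ is ``pseudo-random'' enough that independent sets are small in a robust, counting sense.
\item Count the cycles of length $\ell<g$ in $G_0$, bounding their number by $N\,D^{\ell}\cdot(\text{small factor})$, where $D$ is the typical degree. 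Each lies in the sample with probability $p^{\ell}$. Choose $pD=(\log N)^{\Theta(1/g)}$, or a similarly slowly growing quantity, so that short cycles number $o(pN)$.
\item Show that with high probability the sample has no independent set of size $\beta pN$, for $\beta\approx (\log\log n)^2/\log n$. Do this by a union bound over independent sets, using the hypergraph container method: containers for independent sets of $G_0$ number $\exp(O(N\log N/ D))$-ish, each has at most $\beta N/2$ vertices, and each meets the sample in fewer than $\beta pN$ vertices except with probability $e^{-\Omega(\beta pN)}$.
\item Delete one vertex per short cycle. This decreases the independence bound only negligibly and gives girth at least $g$.
\end{enumerate}

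The main obstacle is step 3 with a degree as large as the bound requires. The container count must be beaten by the Chernoff bound, which forces $pD$ to be about $\log N/\beta$. On the other hand, step 2 needs $pD$ small enough that cycles of length below $g$ are rare. These are compatible only if the underlying line configuration has very few short cycles relative to $(pD)^{\ell}$, for instance if $G_0$ is itself a product-like graph whose $\ell$-cycles are concentrated. I would first try taking $G_0$ to be a power or blow-up of a line configuration in which lines are parametrized by points of $[m]^k$, and two lines meet iff the points differ in one coordinate in a prescribed way, mimicking the Cartesian-power idea of \Cref{thm:main_cart_prod}. The balancing would then lose the two $\log\log n$ factors that appear in the stated bound.
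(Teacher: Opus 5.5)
There is a genuine gap: the proposal never specifies a configuration of lines, and the concrete directions you suggest cannot work.

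\textbf{The suggested constructions.} If the lines form two classes and only cross-class incidences are prescribed by a bilinear condition (a regulus, a grid of lines), the intersection graph is bipartite, so $\alpha\ge n/2$. The Cartesian-power idea would first need an analogue of \Cref{lemma:cartesian} for line intersections, which you do not have; that lemma relies on the translation and rotation invariance of the unit-distance relation. More decisively, every induced subgraph of $H^{\square k}$ has independence ratio at least $1/\chi(H^{\square k})=1/\chi(H)$. So powering and sampling can buy girth, but never a smaller independence ratio.

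\textbf{The parameters.} Your choices are also inconsistent. With $pD=(\log N)^{\Theta(1/g)}$ the sample has average degree $(\log N)^{O(1/g)}$. By Caro--Wei its independence ratio is then at least $(\log N)^{-O(1/g)}$, far above $(\log\log n)^2/\log n$. You are forced to take $pD$ of order $\log n$, and then short cycles can be rare only because of specific structure of the underlying graph. The tension you flag in your last paragraph is the entire difficulty, and it is left unresolved.

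\textbf{How the paper resolves it.} It uses a specific algebraic graph. On $[m]^3$, set $u\sim v$ iff $F(u-v)=0$ with $F(x,y,z)=xz-y^2$. This is realized by the lines $\ell_{(a,b,c)}=\{(x,\,b+(a-\lambda b)x,\,c+(b-\lambda c)x):x\in\mathbb{R}\}$ for generic $\lambda$, and is sampled with $p=\eps/m$, giving expected degree $\Theta(\eps\log m)$. Three ingredients make this work.
\begin{enumerate}
\item[(i)] Supersaturation: $\Delta(G_V)\ge\Omega(|V|\log m/m^2)$ for all $|V|\ge m^{5/2}$ (\Cref{lemma:circle_saturation}). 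This comes from the cliques along the directions $(r^2,rs,s^2)$.
\item[(ii)] A sharper container argument. By (i), each time $T$ halves only $O(m^2/\log m)$ vertices join $S$. Hence $|S|=O(m^2\log(1/\mu)/\log m)$ with $\mu\asymp(\log\log m)^2/\log m$. The union bound runs over fingerprints weighted by $p^{|S|}$, with Chernoff applied only to $T_S$. Your version, a union over containers against $e^{-\Omega(\beta pN)}$, needs $pD\gg\log N/\beta$, i.e.\ $p\gg\log m/(m(\log\log m)^2)$. At such $p$, the $\Theta(m^5)$ triangles of $G_{[m]^3}$ (e.g.\ on lines parallel to the first axis) survive in number $p^3m^5\gg pm^3$.
\item[(iii)] Cycle counting. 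Every triangle lies on a line of direction $(r^2,\pm rs,s^2)$, giving $O(m^5)$ triangles. Non-adjacent vertices have only $m^{o(1)}$ common neighbours (\Cref{lemma:common_neighbor}, via the bound on solutions of $Ax^2+By^2=C$). This gives $m^{\ell+1+o(1)}$ induced $\ell$-cycles, so the sample has $O(\eps^3m^2)$ short cycles against $|U|\approx\eps m^2$. For small $\eps$, deleting them costs at most half the vertices.
\end{enumerate}
Note also that the $(\log\log n)^2$ loss comes from the fingerprint count in (ii), not from balancing degree against cycles as your last paragraph suggests.
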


We present two constructions, as we believe both are interesting in their own right. The first construction achieves independence number $n(\log n)^{-1/2+o(1)}$, which is weaker than the promised one, but it is based on results in additive combinatorics. The second construction is more involved, and not only proves \Cref{thm:main_line}, but also can be realized as a tangency graph of circles, which we discuss in the next subsection.

\subsection{Ringel's circle problem}

A \emph{constellation} is a set of circles in the plane such that no three are tangent at the same point. The \emph{tangency graph} (or touching graph) of a set of circles is the graph whose vertices are the circles, and two are joined by an edge if they are tangent (intersect in exactly one point). Jackson and Ringel \cite{JR} asked about the maximum chromatic number of four types of constellations.

\begin{enumerate}
\setlength{\itemsep}{0pt}  
\setlength{\parsep}{0pt}
    \item Non-overlapping unit circles.
    \item Non-overlapping circles.
    \item Unit circles.
    \item Arbitrary circles.
\end{enumerate}
In cases 1 and 2, the tangency graph is planar, so the maximum chromatic number is $4$ by the celebrated Four Color theorem \cite{AH,AHK}. In case  3, the tangency graph is isomorphic to a unit distance graph, so the problem of determining its maximum chromatic number is the Hadwiger-Nelson problem, and the answer is known to be in the set $\{5,6,7\}$ \cite{deGrey}.

Whether there is a finite upper bound in case 4 was asked by Ringel \cite{Ringel} in 1959. This question, despite the significant interest,  remained open until recently, when Davies, Keller, Kleist, Smorodinsky, and Walczak \cite{DKKSW} answered it negatively. They proved the existence of tangency graphs with arbitrarily large girth and chromatic numbers. Here, we note that triangle-freeness already implies that the circles form a constellation. Their construction is based on an idea of Davies \cite{D21}, where one builds graphs with large girth and chromatic number based on the  Ramsey type results of Pr\"omel and Voigt \cite{PV} on homothetic copies of sets. A disadvantage of this approach is that the size of the resulting graph as a function of the girth and chromatic number is astronomical.

Here, we provide an alternative construction of tangency graphs of circles with large girth and chromatic number, based on parametric constructions and random sampling. A consequence of this approach is a reasonable dependence between the chromatic number and number of vertices. We remark that in our construction, all tangencies are internal, in contrast to the construction of \cite{DKKSW}, where all tangencies are external.

\begin{theorem}\label{thm:main_circle}
  For every positive integer $g$ and infinitely many $n$, there exists a collection of $n$ circles in the plane whose tangency graph has girth at least $g$, independence number $O_g\left(\frac{n(\log \log n)^2}{\log n}\right)$, and thus chromatic number $\Omega_g\left(\frac{\log n}{(\log \log n)^2}\right)$.
\end{theorem}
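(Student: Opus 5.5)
The plan is to realize an abstract high-girth, small-independence-number hypergraph-type construction as circle tangencies with all tangencies internal, following the same parametric scheme used for \Cref{thm:main_line}. Two circles with centers $c_1,c_2$ and radii $r_1>r_2$ are internally tangent if and only if $|c_1-c_2|=r_1-r_2$. Writing a circle as a point $(x,y,r)\in\mathbb{R}^3$, this is a "light-cone" condition: the vector $(c_1-c_2,r_1-r_2)$ is a null vector of the Lorentzian form $x^2+y^2-r^2$. So the task becomes to construct $n$ points in $\mathbb{R}^3$ whose "null-separation graph" has girth at least $g$ and independence number $O_g(n(\log\log n)^2/\log n)$.

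\textbf{Step 1: a parametric family with many null directions.} Fix $m$ rational points $u_1,\dots,u_m$ on the unit circle; $(u_i,1)$ are null directions. Consider a base family of circles indexed by vectors $a\in[N]^m$, where circle $a$ corresponds to the point $\sum_i a_i \lambda_i (u_i,1)$ for generic positive weights $\lambda_i$. Moving along coordinate $i$ changes the point by a multiple of a null vector, so circles $a$ and $a+te_i$ are tangent. With generic $\lambda_i$ and $u_i$, no other pairs are tangent, apart from a controlled exceptional set to be removed. The graph then contains a Hamming-type/Cartesian power structure $P_N^{\square m}$, or a "lines in $m$ directions" structure. A density Hales--Jewett / Gallai-type argument, or more quantitatively a counting of combinatorial lines, shows that every set of density $\delta$ contains many edges.

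\textbf{Step 2: random sampling to kill short cycles.} Sample each vertex with probability $p$, chosen so that the expected number of cycles of length $<g$ is small compared to the number of vertices, and delete one vertex from each short cycle. To control independence, the plan is to show that for every set $S$ of size $\alpha n$ in the sample, there is a dense collection of tangent pairs inside $S$. I would do this via a supersaturation statement on the full grid: any set of density $\beta\ge (\log\log n)^2/\log n$ spans at least $\beta^2\cdot(\text{edges})$ many edges, in a sufficiently spread way. Then I would apply a union bound or a container-type argument over all candidate independent sets. The choices $m\approx \log n/\log\log n$ and $N$ polylogarithmic should balance the parameters into the stated $(\log\log n)^2/\log n$ density.

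\textbf{Step 3: genericity.} This step verifies that no unintended tangencies occur. An unintended tangency is an algebraic condition on the $\lambda_i$, so choosing them algebraically independent avoids it. It also verifies that tangencies are single points, so the triangle-freeness/constellation condition holds automatically.

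The main obstacle is Step 2. The points $(u_i,1)$ are linearly dependent in $\mathbb{R}^3$, so the geometric graph contains many unintended short cycles and extra tangencies coming from relations among the $u_i$. Random sampling must destroy all cycles of length $<g$ while preserving the small-independence property for all sets simultaneously, and the number of candidate independent sets is too large for a naive union bound. I would first try to reduce to the abstract construction already built for \Cref{thm:main_line}, mapping each line to a circle through the same null-vector parametrization. The key observation is that line intersection and internal tangency are both "difference lies in a fixed quadric cone" conditions, so the same sampled configuration and independence analysis can be reused.
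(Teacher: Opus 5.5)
\textbf{There is a genuine gap: the Step~1 graph, combined with your sampling scheme, cannot reach the claimed bound.} Your observation that internal tangency is a null-cone condition on differences is correct, and the paper exploits exactly this. The trouble is the point set you build on it. With algebraically independent $\lambda_i$, the graph of Step~1 is precisely the Hamming graph $K_N^{\square m}$ (not $P_N^{\square m}$). Indeed, for an integer difference vector $d$ the Lorentz form equals $2\sum_{i<j}d_id_j\lambda_i\lambda_j(u_i\cdot u_j-1)$, which vanishes only when $d$ has at most one nonzero entry. So unintended tangencies are not the obstacle.

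The obstacle is that every axis line is an $N$-clique. A $p$-sample contains about $mN^{m-1}\binom{N}{3}p^3\approx \frac{m(pN)^2}{6}\cdot pN^m$ triangles inside axis lines. So, as your Step~2 requires, short cycles are few compared to the sample size only if $pN=O(m^{-1/2})$. Then the average degree is $m(N-1)p=O(\sqrt m)$. Shearer's bound $\alpha\ge(1-o(1))\,n\log d/d$ for triangle-free graphs of average degree $d$ then forces independence ratio $\Omega(\log m/\sqrt m)$. Since $n\approx pN^m$ gives $\log n=\Omega(m)$, the best this route yields is $(\log n)^{-1/2+o(1)}$. That is the bound of \Cref{thm:line_alter}, not $(\log\log n)^2/\log n$.

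No choice of $m,N$ repairs this. With equal line lengths, each new direction multiplies the number of points by $N$, while the per-vertex triangle count, of order $m(pN)^2$, caps the degree at about $\sqrt m$. Your fallback of ``reusing the construction for \Cref{thm:main_line}'' is not an argument. In the paper that theorem is proved by the very construction needed here. The family of Section~\ref{sect:line} uses the cubic relation $(A-a)^3=B-b$, which is not a cone condition and gives only the weaker bound.

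\textbf{The missing idea is a low-dimensional grid whose null directions carry lines of all different lengths.} The paper takes $Q=[m]^3$ with $u\sim v$ iff $F(u-v)=0$, where $F(x,y,z)=xz-y^2$. This is your light cone in other coordinates: the circle for $(a,b,c)$ has center $(a-c,2b)$ and radius $R+a+c$.
\begin{itemize}
\item The rational null directions $(r^2,rs,s^2)$, with $r>s$ coprime, give cliques along lines containing about $m/r^2$ grid points. Summing $|V|^2/(12r^2m^2)$ over coprime pairs gives $\Delta(G_V)=\Omega(|V|\log m/m^2)$; the $\log m$ comes from the harmonic-type sum over directions.
\item With $p=\varepsilon/m$, the sample has about $\varepsilon m^2$ points and degree of order $\varepsilon\log m$.
\item Yet it has only $O(\varepsilon^3m^2)$ triangles. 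Common neighbours of adjacent points lie on the line through them, so triangles are collinear triples, dominated by the few long lines.
\item It has only $\varepsilon^\ell m^{1+o(1)}$ induced $\ell$-cycles for $\ell\ge 4$. Non-adjacent points have $m^{o(1)}$ common neighbours, via the equation $D(2x-a)^2+4(ay-bx)^2=Da^2$ and a bound on representations by binary quadratic forms.
\item The container method, with the refined fingerprint bound $|S|=O(m^2\log(1/\mu)/\log m)$, then gives independence ratio $O((\log\log m)^2/\log m)$ with $n=\Theta(m^2)$.
\end{itemize}
Degree of order $\log n$ with only a small constant number of short cycles per vertex is exactly what a product of cliques cannot provide.
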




\subsection{Forbidding ordered trees}

A fundamental conjecture in  structural graph theory is the Gy\'arf\'as-Sumner conjecture \cite{Gy1,Sumner}, which proposes that for every fixed forest $T$, the family of graphs avoiding $T$ as an induced subgraph is $\chi$-bounded. This conjecture is known, for example, if $T$ is a path \cite{Gy1,Gyarfas}, a subdivision of a star \cite{Scott}, or a tree of radius two \cite{KP}, but it is open in general. On the other hand, Nguyen, Scott and Seymour \cite{NSS} proved that every $n$-vertex induced-$T$-free graph of clique number at most $\omega$ has chromatic number at most $\exp((\log n)^{1-c})=n^{o(1)}$ for some $c=c(T,\omega)>0$.

It is natural to wonder whether a similar result holds for ordered graphs. An \emph{ordered graph} is a graph with a linear ordering on its vertex set, and (induced) containment of ordered graphs is a containment respecting the orderings. It is easy to see that the natural ordered analogue of the Gy\'arf\'as-Sumner conjecture is false:

\medskip

\noindent
\textbf{Example.} The \emph{shift graph} $S_k$ is the graph whose vertices are the pairs $(a,b)$ with $1\leq a<b\leq k$, and two vertices $(a,b)$ and $(c,d)$ are joined by an edge if $b=c$ or $a=d$. These graphs were introduced by Erd\H{o}s and Hajnal \cite{shift}, who showed that $S_k$ is triangle-free on $n=\binom{k}{2}$ vertices of chromatic number $\Omega(\log n)$. Ordering the vertices lexicographically, the resulting ordered graph avoids every three-edge ordered path $x_1x_2x_3x_4$ with $x_1,x_3<x_2,x_4$ as an induced subgraph.

\medskip

On the other hand, it is tempting to believe that the following relaxation of the Gy\'arf\'as-Sumner conjecture continues to hold.

\begin{conjecture}
    Let $F$ be an ordered forest, $\omega\in \mathbb{N}$, and $\eps>0$, then the following holds for every sufficiently large $n$. If $G$ is an $n$-vertex ordered graph with $\omega(G)\leq \omega$ and with no induced copy of $F$, then $\chi(G)\leq n^{\eps}$.
\end{conjecture}

This conjecture is relatively unexplored. Bria\'nski, Davies, and Walczak \cite{BDW} announced a proof that if $M$ is an ordered matching, then the family of  ordered graphs avoiding $M$ as an induced subgraph is $\chi$-bounded.  Axenovich, Rollin, Ueckerdt \cite{ARU} studied the non-induced variant of the conjecture, and provided several examples of ordered forests whose exclusion does not force $\chi$-boundedness. While we do not have a solution to this conjecture yet, our final construction shows that there exist triangle-free families of ordered graphs that avoid a simple three-edge path as an induced subgraph, yet have almost polynomial chromatic number.

Say that an ordered graph is \emph{cross-path-free} if it does not contain four vertices $a<b<c<d$ such that $ac,ad,bd\in E(G)$ and $bc\not\in E(G)$. Note that being cross-path-free is more restrictive than forbidding the ordered path $cadb$, as we do not require $ab$ and $cd$ to be non-edges. See \Cref{fig:1} for an illustration.

\begin{figure}[ht]
\begin{center}
\begin{tikzpicture}

  \node[circle, draw, fill=black, inner sep=2pt,label=below:$a$] (v1) at (0,0) {};
  \node[circle, draw, fill=black, inner sep=2pt,label=below:$b$] (v2) at (2,0) {};
  \node[circle, draw, fill=black, inner sep=2pt,label=below:$c$] (v3) at (4,0) {};
  \node[circle, draw, fill=black, inner sep=2pt,label=below:$d$] (v4) at (6,0) {};
  \draw (v1) to[bend left=20] (v3);
  \draw (v1) to[bend left=30] (v4);
  \draw (v2) to[bend left=20] (v4);
  \draw[dashed] (v2) -- (v3);
\end{tikzpicture}
\end{center}
\vspace{-10pt}
\caption{The forbidden configuration.}
\label{fig:1}
\end{figure}

\begin{theorem}\label{thm:main_ordered}
For infinitely many $n$, there exists a triangle- and cross-path-free ordered graph on $n$ vertices with independence number $n^{1-\Omega(1/\log \log n)}$, and thus chromatic number $n^{\Omega(1/\log \log n)}$.
\end{theorem}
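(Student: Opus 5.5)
We will build the ordered graph as a shift-graph-type construction over a product of orders, i.e.\ an iterated lexicographic product of shift graphs. Concretely, we take vertices to be tuples $x=(x_1,\dots,x_t)$ with $x_i\in[m]$, ordered lexicographically. We join $x$ and $y$ when they first differ at coordinate $i$, with $x_i<y_i$, and a prescribed ``shift'' relation holds between the tail of $x$ and the head of $y$ at that level. The model is the shift graph $S_k$ from the Example, whose lexicographic order already excludes one ordered $P_4$. The parameters are $m$ and $t\approx \log m$-ish chosen so that $n=m^{t}$, and the target $\alpha\le n^{1-c/\log\log n}$ corresponds to losing a constant factor in density at each of $t\approx \log n/\log\log n$ levels. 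That is, we need $\alpha(G_t)\le (1-c)^t n$ roughly, or rather a per-level loss of a factor $m^{-c/\log\log n}$.

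Alternatively, and this is the version I would commit to, the vertices are a random subset of pairs $(a,b)$ of real intervals, or points of $[N]^2$ sampled with probability $p$. Edges are placed by a geometric rule (say $x=(a,b)$ and $y=(c,d)$ with $a<c$ adjacent iff $c\in(b-w,b]$ for a window $w$), and the parameters are tuned so that triangles are rare and can be deleted. The proof then has three steps.

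\textbf{Step 1: cross-path-freeness.} This should be deterministic. Suppose $u<v<x<y$ with $ux,uy,vy$ edges. The window rule forces the ``right endpoint'' of $u$ to lie near both $x$ and $y$, and the right endpoint of $v$ to lie near $y$. Interval monotonicity then forces the right endpoint of $v$ to be near $x$, so $vx$ is an edge. The edge rule must therefore be chosen \emph{interval-like}: each vertex's forward neighbourhood is an interval in the order, and these intervals have monotone endpoints. This is the structural reason cross-paths vanish. This also survives vertex deletion, since the property is hereditary.

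\textbf{Step 2: triangle removal.} A triangle $u<v<x$ requires the forward intervals of $u$ and $v$ both to contain $x$ and $v$ to lie in $u$'s interval. With multi-scale windows (a random scale at each of $\approx\log\log n$ levels, in the spirit of Burling/shift iteration), we count expected triangles. We choose $p$ so that their number is at most half the vertices, then delete one vertex per triangle.

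\textbf{Step 3: independence number.} This is the main obstacle. We need to show that every set $I$ of size $n^{1-c/\log\log n}$ spans an edge. I would argue by density increment across scales. If $I$ is independent, then at each scale the forward intervals of elements of $I$ must avoid $I$. A counting argument shows that this forces $I$ to lose a factor in density when passed to a sub-block at the next scale, and iterating over $\log n/\log\log n$ scales gives the bound. I would first try to prove the per-scale lemma deterministically for the unsampled structure. I would then use a union bound over ``block-structured'' candidate sets, rather than over all sets, to transfer it to the random sample. Controlling that union bound is where I expect the real difficulty.
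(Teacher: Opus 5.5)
There is a genuine gap. The construction you commit to is not cross-path-free, and Step 3, which carries the whole quantitative content of the theorem, is only a hope.

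For Step 1, order the points by first coordinate, as your rule suggests. Take $w=10$ and $u=(1,12)$, $v=(2,20)$, $x=(6,\cdot)$, $y=(11,\cdot)$. Then $ux,uy,vy$ are edges, but $vx$ is not, since $6\notin(10,20]$. Every defining inequality holds with slack, so this is not a degenerate coincidence that random sampling avoids, and your plan has no step that deletes cross-paths.

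Your monotonicity heuristic looks at the wrong endpoint. If forward neighbourhoods are order-intervals $[L(z),R(z)]$, then $u<v<x<y$ is a cross-path exactly when $L(u)\le x<L(v)\le y\le\min(R(u),R(v))$, so only left endpoints matter. Non-decreasing left endpoints do not exclude this configuration. Non-increasing ones do, but they force every forward neighbourhood to lie beyond the last vertex that has a nonempty one; every edge then crosses that cut, so the graph is bipartite.

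In Step 3 you never specify the scales or blocks, nor why an independent set must lose density between scales. So there is as yet no argument for $\alpha\le n^{1-\Omega(1/\log\log n)}$.

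The missing idea is to amplify $\rho=|V|/\alpha$ recursively rather than with a single-shot geometric rule. The paper starts from any triangle- and cross-path-free ordered graph with $\rho_0>8$, namely a Hasse diagram of a random planar point set ordered by first coordinate. It then combines two such graphs $K,L$, on $k$ and $\ell$ vertices, into $G(K,L,a,b)$:
for each $w\in V(L)$ there is an interval $J_w$ made of $a$ consecutive copies of the blow-up $K^{(b)}$. Each copy is labelled by an independent uniformly random permutation $\tau$ of $[kb]$. Vertices $u\in J_w$ and $v\in J_z$ are adjacent iff $wz\in E(L)$ and $\tau(u)=\tau(v)$.

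Triangle- and cross-path-freeness follow from an eight-case check on how $u_1<u_2<u_3<u_4$ fall into the intervals $J_w$. For the independence number, every independent set lies in a twin-closed union $S(\mathcal I)$ of independent sets of the copies of $K^{(b)}$. There are only $2^{n/b}$ such candidates, and this is the precise form of your ``block-structured'' union bound. For a fixed candidate, let $N_{w,i}$ be the number of its vertices in $J_w$ with label $i$, and set $Z=\sum_i\max_I\sum_{w\in I}N_{w,i}$, the maximum taken over independent $I\subset V(L)$. Then $Z$ bounds the candidate's independence number, has mean at most $2n/(\rho_K\rho_L)$, and is concentrated by Azuma's inequality. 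This gives $\rho_G\ge\rho_K\rho_L/4$ when $a\ge\rho_K\rho_L^2$ and $b\ge2\rho_K^2\rho_L^2$.

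Taking $K=L=G_i$ yields $\rho_{i+1}\ge\rho_i^2/4$ and $n_{i+1}\le6\rho_i^7n_i^2$. The number of rounds satisfies $i\le\log_2\log_2 n_i$, and the accumulated $\rho^7$ overhead over these rounds is what costs the $\log\log n$ factor. The result is $\rho_i\ge2^{\Omega(\log n_i/\log\log n_i)}$.
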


We highlight that our initial interest in cross-path-free ordered graphs came from a notorious combinatorial geometry problem. Given a set of points $P$ in $\mathbb{R}^2$, its \emph{box-Delaunay graph}, also known as \emph{rectangle-visibility} graph, is the graph whose vertices are the elements of $P$, and $x,y\in P$ are joined by an edge if the axis-parallel rectangle spanned by $x$ and $y$ does not contain any other point of $P$. We denote this graph by $D_{\bx}(P)$. Even, Lotker, Ron, and Smorodinsky \cite{ELRS} and Har-Peled and Smorodinsky \cite{HS} conjectured that every $n$-vertex box-Delaunay graph contains an independent set of size $\Omega(n)$. This was disproved by Chen, Pach, Szegedy, and Tardos \cite{CPST}, who showed that a random $n$-point set $P\subset [0,1]^2$ satisfies $\alpha(D_{\bx}(P))\leq O(n(\log \log n)^2/\log n)$ with high probability. Recently, Jin, Kwan and Lichev \cite{JKL} improved this by showing that  $\alpha(D_{\bx}(P))=\Theta(n(\log \log n)/\log n)$ for the random point set $P$. 

Chen, Pach, Szegedy, and Tardos \cite{CPST} proposed a natural relaxation of the now disproved conjectures of \cite{ELRS} and \cite{HS}:  we have $\alpha(D_{\bx}(P))=n^{1-o(1)}$ for every $n$-point set $P$. This seemingly innocent problem has become a well-known open problem in the area.  The best known lower bound $\alpha(D_{\bx}(P))=\Omega(n^{0.631})$ is due to Chan \cite{Chan}, improving an earlier result of \cite{AEGR}. On the other hand, we \cite{TomDel} recently showed the existence of $n$-point sets $P$ with $\alpha(D_{\bx}(P))=ne^{-\Omega(\sqrt{\log n})}$, possibly making this conjecture even more puzzling. 

A set $P\subset \mathbb{R}^2$ naturally defines the partial ordering, where  $x,y\in P$ are ordered as $x\preceq y$ if $x_1\leq y_1$ and $x_2\leq y_2$. The Hasse-diagram  of this poset, denoted by $H(P)$, is a subgraph of $D_{\bx}(P)$. It is not difficult to show that the conjecture that $\alpha(D_{\bx}(P))=n^{1-o(1)}$ holds for every $n$-point set $P$ is equivalent to the conjecture that $\alpha(H(P))=n^{1-o(1)}$ for every $n$-point set $P$. However, observe that ordering $P$ by the first coordinates of the points, the resulting ordering of $H(P)$ is a triangle-free and cross-path-free ordered graph. Therefore, a possible route to showing that $\alpha(H(P))=n^{1-o(1)}$ would be to prove that every triangle- and cross-path-free ordered graph $G$ on $n$ vertices satisfies $\alpha(G)=n^{1-o(1)}$. While \Cref{thm:main_ordered} does not invalidate this strategy, it suggests certain surprising limitations.

\bigskip

\noindent
\textbf{Organization.} In the next section, we introduce some preliminary results and notation. Then, \Cref{thm:box_dim3} is proved in \Cref{sect:box3}, followed by the proof of \Cref{thm:box_dimd} in \Cref{sect:boxd}. We prove \Cref{thm:main_unitdist} and \Cref{thm:main_cart_prod}  in \Cref{sect:unitdist}. Then, we prove \Cref{thm:main_integer} in \Cref{sect:integer}. Then, a weaker version of \Cref{thm:main_line} is proved in \Cref{sect:line}, while the proofs of \Cref{thm:main_line} and \Cref{thm:main_circle} are presented in \Cref{sect:circle}. Finally, the proof of \Cref{thm:main_ordered} is presented in \Cref{sect:ordered}.

\section{Preliminaries}
In almost all of our constructions, we employ probabilistic arguments. In particular, a common proof strategy is to take a random sample of the vertices of a certain structured graph $G$, and show that sampling destroys triangles and short cycles, while it keeps the ratio of the independence number to the number of vertices. In order to bound the independence number of random subsets of graphs, we employ the celebrated \emph{graph container method} \cite{KW82,S05}. We do not assume familiarity with this method, we present the key steps in a self-contained manner. Given a graph $G$, $\Delta(G)$ denotes its maximum degree, and if $T\subset V(G)$, then $G[T]$ is the subgraph of $G$ induced by $T$.

\bigskip

\noindent
\textbf{The graph container method.} Let $G$ be a graph. Our goal is to find a small collection of sets, called \emph{containers}, such that every independent set of $G$ is contained in one of the containers. Given an independent set $I$, we run the following algorithm to build a container. Fix an arbitrary vertex ordering $<$ of the vertices of $G$, and fix a parameter $\Delta_0$.

\begin{algorithm}
\caption{Container Algorithm}\label{alg:cont}
\begin{algorithmic}[1]
    \Require Independent set $I$
    \State Set $T:=V(G)$ and $S:=\emptyset$
    \While{$\Delta(G[T]) \geq \Delta_0$}
        \State Let $v$ be the first vertex of maximum degree in $G[T]$ with respect to $<$.
        \If{$v \notin I$}
            \State $T:=T- \{v\}$
        \Else
            \State $S:= S \cup \{v\}$
            \State $T:= T-\bigl(N_{G[T]}(v)\cup\{v\}\bigr)$
        \EndIf
    \EndWhile
    \State \Return $(S,T)$
\end{algorithmic}
\end{algorithm}

\noindent
The set $S=S(I)$ is called the \emph{fingerprint} of $I$. Standard observations about the algorithm are that 
\begin{itemize}
    \item[1.] $S\subset I\subset S\cup T$,
    \item[2.] $S$ and $T$ are disjoint,
    \item[3.] $|S|\leq |V(G)|/(\Delta_0+1)$,
    \item[4.] the set $T$ is uniquely defined by $S$,
    \item[5.] there are no edges between $S$ and $T$,
    \item[6.] $\Delta(G[T])<\Delta_0$.
\end{itemize}
  Here, 3. follows from the observation that every time the algorithm adds a vertex to $S$, it removes at least $\Delta_0+1$ vertices of $T$. By 4., we may write $T_S$ denoting the unique set $T$. In order to build a collection $\mathcal{C}$ of containers, we run the algorithm for every independent set $I$, and define
$$\mathcal{C}=\{S\cup T_S: S\text{ is a fingerprint of some independent set }I\}.$$

We also collect some standard concentration inequalities that will be used throughout our paper.

\begin{lemma}[Multiplicative Chernoff bound]
Let $X_1,\dots,X_n$ be independent Bernoulli random variables, and let $X=X_1+\dots+X_n$.
\begin{enumerate}
    \item If $t\geq 2\mathbb{E}X$, then
    $$\mathbb{P}(X\geq t)\leq \exp(-t/6).$$
    \item We have 
    $$\mathbb{P}(X\leq \mathbb{E}X/2)\leq \exp(-\mathbb{E}X/8).$$
\end{enumerate}
\end{lemma}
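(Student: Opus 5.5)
The plan is to prove both parts from the moment generating function and Markov's inequality. Write $\mu=\mathbb{E}X=\sum_i p_i$, where $p_i=\mathbb{P}(X_i=1)$. For any real $\lambda$, independence gives
$$\mathbb{E}e^{\lambda X}=\prod_{i=1}^n\bigl(1+p_i(e^\lambda-1)\bigr)\leq \exp\bigl(\mu(e^\lambda-1)\bigr),$$
using $1+x\leq e^x$.

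For part 1, take $\lambda>0$ and apply Markov's inequality to $e^{\lambda X}$:
$$\mathbb{P}(X\geq t)\leq \exp\bigl(\mu(e^\lambda-1)-\lambda t\bigr).$$
The standard choice is $\lambda=\log(t/\mu)$, which is positive since $t\geq 2\mu>\mu$; if $\mu=0$ then $X=0$ almost surely and there is nothing to prove. This choice gives
$$\mathbb{P}(X\geq t)\leq \exp\bigl(-t\log(t/\mu)+t-\mu\bigr)\leq \exp\bigl(-t(\log(t/\mu)-1)\bigr).$$
Setting $s=t/\mu\geq 2$, it remains to check $\log s-1\geq 1/6$ in the relevant range. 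This fails near $s=2$, since $\log 2-1<0$, so I will not drop the $-\mu$ term. Instead I will keep the full exponent $-\mu\bigl(s\log s-s+1\bigr)$ and verify the elementary inequality $s\log s-s+1\geq s/6$ for all $s\geq 2$. At $s=2$ the left side is $2\log 2-1\approx 0.386$ and the right side is $1/3$, so it holds. The derivative of $s\log s-s+1-s/6$ is $\log s-1/6>0$ for $s\geq 2$, so the difference only grows. This yields $\mathbb{P}(X\geq t)\leq e^{-\mu s/6}=e^{-t/6}$.

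For part 2, take $\lambda<0$, i.e.\ apply Markov's inequality to $e^{-\lambda X}$ with $\lambda>0$:
$$\mathbb{P}(X\leq \mu/2)\leq \exp\bigl(\mu(e^{-\lambda}-1)+\lambda\mu/2\bigr).$$
Choosing $\lambda=\log 2$ gives the exponent $\mu(-1/2+\tfrac12\log 2)=-\mu(1-\log 2)/2\approx -0.153\mu$. This is at most $-\mu/8$, which proves the bound.

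The only delicate point is the calculus inequality in part 1 near $s=2$, where the crude simplification breaks down. Keeping the $-\mu$ term resolves it.
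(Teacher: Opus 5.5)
Your proof is correct. The paper gives no proof of this lemma: it quotes it as a standard concentration inequality. Your derivation is the standard moment-generating-function and Markov argument, so there is nothing in the paper to compare it against.

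Each step checks out:
\begin{itemize}
\item The bound $\mathbb{E}e^{\lambda X}\le \exp(\mu(e^\lambda-1))$ holds for every real $\lambda$, since each factor $(1-p_i)+p_ie^\lambda$ is nonnegative.
\item In part 1, the choice $\lambda=\log(t/\mu)$ reduces the claim to $s\log s-s+1\ge s/6$ for $s\ge 2$, where $s=t/\mu$. This holds because $2\log 2-1\approx 0.386>1/3$ and the difference has derivative $\log s-1/6>0$ for $s\ge 2$.
\item The degenerate case $\mu=0$ is handled correctly.
\item In part 2, the choice $\lambda=\log 2$ gives exponent $-\tfrac{1-\log 2}{2}\mu\approx -0.153\mu\le -\mu/8$.
\end{itemize}

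You were right to keep the $-\mu$ term in part 1, since dropping it makes the bound fail near $s=2$.
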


\begin{lemma}[Azuma-Hoeffding inequality]
Let $X_0,\dots,X_n$ be a martingale such that $|X_i-X_{i-1}|\leq c$ for $i\in [n]$. Then for every $t>0$, we have
$$\mathbb{P}(X_n-X_0>t)\leq \exp\left(-\frac{t^2}{2c^2n}\right)\quad\text{and}\quad\mathbb{P}(X_0-X_n>t)\leq \exp\left(-\frac{t^2}{2c^2n}\right).$$
\end{lemma}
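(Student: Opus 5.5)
We will prove this by the standard exponential moment method. Let $\mathcal{F}_i$ be the $\sigma$-algebra generated by $X_0,\dots,X_i$, and write $D_i=X_i-X_{i-1}$ for $i\in[n]$. The martingale property gives $\mathbb{E}[D_i\mid \mathcal{F}_{i-1}]=0$, and by assumption $|D_i|\leq c$. We then have $X_n-X_0=\sum_{i=1}^n D_i$. For any $\lambda>0$, Markov's inequality applied to $e^{\lambda(X_n-X_0)}$ gives
$$\mathbb{P}(X_n-X_0>t)\leq e^{-\lambda t}\,\mathbb{E}\bigl[e^{\lambda(X_n-X_0)}\bigr].$$
So it suffices to bound the moment generating function by $e^{n\lambda^2c^2/2}$.

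The key step is a conditional form of Hoeffding's lemma: if $\mathbb{E}[D\mid\mathcal{F}]=0$ and $|D|\leq c$, then $\mathbb{E}[e^{\lambda D}\mid \mathcal{F}]\leq e^{\lambda^2c^2/2}$. To see this, use convexity of $x\mapsto e^{\lambda x}$ on $[-c,c]$. Writing $x$ as a convex combination of the endpoints, we get for $x\in[-c,c]$
$$e^{\lambda x}\leq \frac{c-x}{2c}e^{-\lambda c}+\frac{c+x}{2c}e^{\lambda c}.$$
Taking conditional expectations kills the linear term, leaving $\mathbb{E}[e^{\lambda D}\mid\mathcal{F}]\leq \cosh(\lambda c)$. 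Comparing Taylor series termwise, using $(2k)!\geq 2^k k!$, gives $\cosh(y)\leq e^{y^2/2}$.

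Next we iterate via the tower property. Since $e^{\lambda(X_{n-1}-X_0)}$ is $\mathcal{F}_{n-1}$-measurable,
$$\mathbb{E}\bigl[e^{\lambda(X_n-X_0)}\bigr]=\mathbb{E}\Bigl[e^{\lambda(X_{n-1}-X_0)}\,\mathbb{E}[e^{\lambda D_n}\mid\mathcal{F}_{n-1}]\Bigr]\leq e^{\lambda^2c^2/2}\,\mathbb{E}\bigl[e^{\lambda(X_{n-1}-X_0)}\bigr].$$
Induction on $n$ then yields $\mathbb{E}[e^{\lambda(X_n-X_0)}]\leq e^{n\lambda^2c^2/2}$. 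Hence
$$\mathbb{P}(X_n-X_0>t)\leq \exp\bigl(-\lambda t+n\lambda^2c^2/2\bigr),$$
and choosing $\lambda=t/(c^2n)$ gives the bound $\exp(-t^2/(2c^2n))$.

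For the second inequality, apply the first one to the martingale $-X_0,\dots,-X_n$. It satisfies the same bounded-difference condition with respect to the same filtration.

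The only step requiring any care is the conditional Hoeffding lemma. The convexity argument is pointwise and survives conditioning without difficulty, so we do not expect a genuine obstacle; the rest is routine.
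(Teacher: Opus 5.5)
Your proof is correct. It is the standard exponential-moment argument: Markov's inequality, the conditional bound $\mathbb{E}[e^{\lambda D_i}\mid\mathcal{F}_{i-1}]\le\cosh(\lambda c)\le e^{\lambda^2c^2/2}$ via convexity, iteration through the tower property, the choice $\lambda=t/(c^2n)$, and the reflection $X_i\mapsto -X_i$ for the lower tail. The paper does not prove this lemma at all; it simply quotes it as a standard concentration inequality, so your argument supplies the usual textbook proof and there is no competing route to compare against.
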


\section{Box-intersection graphs in dimension 3}\label{sect:box3}

In this section, we prove \Cref{thm:box_dim3}. In particular, we deduce it from the following more general theorem.

\begin{theorem}\label{thm:box_parametric}
For every pair of positive integers $r,s$, there exists a triangle-free intersection graph of  $s^rr$ boxes in $\mathbb{R}^3$ with independence number at most $(r^2+s)s^{r-1}$.
\end{theorem}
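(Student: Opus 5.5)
The plan is to construct a structured triangle-free graph $G_{r,s}$ on $r\cdot s^r$ vertices with small independence number, and then realize it by boxes in $\mathbb{R}^3$. The vertices are pairs $(i,x)$ with $i\in[r]$ (the \emph{level}) and $x\in[s]^r$ (the \emph{position}). For $i<j$, I join $(i,x)$ to $(j,y)$ if and only if $x$ and $y$ differ in exactly the $i$-th coordinate. Each level is an independent set of size $s^r$. The bound $(r^2+s)s^{r-1}=s^r+r^2s^{r-1}$ says that any independent set beats a single level by at most $r^2s^{r-1}$, so the combinatorial heart is a charging argument along lines.

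Triangle-freeness is immediate. Suppose $(i,x),(j,y),(k,z)$ with $i<j<k$ formed a triangle. Then $y$ and $z$ both differ from $x$ only in coordinate $i$, so $z_j=x_j=y_j$. But $(j,y)\sim(k,z)$ requires $z_j\neq y_j$, a contradiction.

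For the independence number, fix an independent set $I$. Partition $[s]^r$ into the $s^{r-1}$ lines in direction $i$, for each $i$. Call a direction-$i$ line $L$ \emph{claimed} if $I$ contains a level-$i$ vertex on $L$.
\begin{itemize}
\item If $I$ contains two level-$i$ vertices at distinct positions of $L$, then $I$ contains no vertex of level $>i$ on $L$ at all.
\item If $I$ contains exactly one level-$i$ vertex on $L$, then the higher-level vertices of $I$ on $L$ all sit at that same position, so there are at most $r$ of them.
\end{itemize}
Now assign each vertex $(j,y)\in I$ to the smallest level $i<j$ such that the direction-$i$ line through $y$ is claimed, or to ``free'' if no such $i$ exists.

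The free vertices are handled by a potential argument: I expect to show that their number plus the number of level-$i$ vertices placed on claimed lines is at most about $s^r$. The idea is to process the levels in order and track, for each position, which lines through it have been claimed. Each unassigned higher-level vertex at position $y$ forces all lines through $y$ in smaller directions to be unclaimed, and this behaves like a single ``column'' of $r$ levels occupying one position. Summing the exceptions from the second bullet gives at most $r$ extra vertices per line and per direction, that is, at most $r\cdot r\cdot s^{r-1}$ in total, which is exactly the $r^2s^{r-1}$ slack. I will have to be careful in this bookkeeping so that the $s^r$ main term is not double counted. If the adjacency rule above does not give it cleanly, I would adjust the rule, for example by letting level $i$ see only the coordinates $\ge i$ so that the lines nest like a tree.

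The step I expect to be the main obstacle is the geometric realization in $\mathbb{R}^3$. My first attempt is as follows.
\begin{itemize}
\item Encode positions $x$ as base-$s$ numbers in the first axis, so that a direction-$i$ line corresponds to an interval of positions.
\item Make a level-$i$ box long across that interval in the first coordinate.
\item Use the second and third coordinates to exclude the box's own position and to prevent unwanted intersections between boxes of the same level. To do this, give each level a nested range of heights in the second axis, in the spirit of Burling's construction, and make the third axis split ``left of $x_i$'' from ``right of $x_i$''.
\end{itemize}
Convexity is the crux: the box must hit everything on $L$ except position $x$. To handle this I would split each vertex's role between the two remaining axes, and if necessary replace the rule ``$y_i\neq x_i$'' by ``$y_i>x_i$''. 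The one-sided version is easier to realize and should keep both triangle-freeness and the counting bound intact.
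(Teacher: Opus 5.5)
\textbf{The gap is the realization in $\mathbb{R}^3$, and for your primary adjacency rule it is impossible, not merely hard.} This realization is the heart of the theorem.

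For $r\ge 2$, fix a direction-$1$ line $L$. The vertices $(1,x),(2,x)$ with $x\in L$ then induce $K_{s,s}$ minus a perfect matching. Suppose boxes $A_x$ (level $1$) and $B_x$ (level $2$) realized this. Each disjoint pair $A_x,B_x$ needs a coordinate in which their projections are disjoint. Now suppose that in one coordinate the projection of $A_p$ lies left of that of $B_p$, and likewise for some $q\neq p$. Write $[a_p^-,a_p^+]$ and $[b_p^-,b_p^+]$ for the projections, and use that $A_p$ meets $B_q$ and $A_q$ meets $B_p$. This gives $b_q^-\le a_p^+<b_p^-\le a_q^+<b_q^-$, a contradiction; the ``to the right'' case is symmetric. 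So each coordinate separates at most two pairs, which forces $s\le 6$. The convexity problem you flag is therefore a genuine obstruction.

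Your one-sided fallback is the right graph. After reversing the order of the levels and of the coordinates, and reflecting each coordinate, it is exactly the paper's graph: there $(x,i)\sim(y,j)$ for $i<j$ iff $x,y$ differ only in coordinate $j$ and $x_j<y_j$. However, you give no boxes for it. Your first heuristic is also false: under a base-$s$ encoding, a line in direction $i$ is an interval only for the least significant digit. Otherwise it is an arithmetic progression whose span contains many positions off the line.

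The missing idea is to use two encodings of opposite significance. With $b=10s$, $\overline{x}=\sum_h x_hb^{r-h}$ and $\hat{x}_{[i]}=\sum_{h>i}x_hb^{h-1}$, the paper takes
$X_{x,i}=[2\overline{x},2\overline{x}+b^{r-i}]$, $Y_{x,i}=[2\overline{x}-(2x_i+1)b^{r-i},2\overline{x}]$ and $Z_{x,i}=[2\hat{x}_{[i]}-2b^{i-1},2\hat{x}_{[i]}+(2x_i-1)b^{i-1}]$.
The $X$ and $Y$ intervals together force the first coordinate in which $x,y$ differ to be at least $j$. The $Z$ intervals force the last one to be at most $j$, with $x_j<y_j$. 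The case $x=y$ is excluded by $Z$, and the case $i=j$ by $X$ and $Z$ together. Nothing in your proposal replaces this construction.

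\textbf{The independence bound is also not established, but this part is easy to fix.} Your charging and potential argument is only announced (``I expect to show'', ``about $s^r$''), and the bookkeeping that would avoid double counting is never carried out. No charging is needed. Let $I_i=\{x:(i,x)\in I\}$ and take $i<j$. Each direction-$i$ line contains at most one point of $I_i\cap I_j$: two such points $p,q$ with $p_i<q_i$ would make $(i,p)(j,q)$ an edge under either of your rules. Hence $|I_i\cap I_j|\le s^{r-1}$. A position lying in $c\ge 1$ of the sets $I_i$ satisfies $c\le 1+\binom{c}{2}$, so
$|I|=\sum_i|I_i|\le |\bigcup_i I_i|+\sum_{i<j}|I_i\cap I_j|\le s^r+\binom{r}{2}s^{r-1}$.
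Your triangle-freeness argument is correct and carries over verbatim to the one-sided rule.
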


\begin{proof}[Proof of \Cref{thm:box_dim3} given \Cref{thm:box_parametric}] Set $s=r^2$. Then there exists a triangle-free intersection graph $G$ of  $n=r^{2r+1}$ boxes in $\mathbb{R}^3$ with independence number at most $$(r^2+s)s^{r-1}=2r^{2r}=\frac{2n}{r}=O\left(\frac{n\log\log n}{\log n}\right).$$
\end{proof}

The rest of the section is devoted to the proof of \Cref{thm:box_parametric}. Let $r,s$ be positive integers, and write $Q=\{0,\dots,s-1\}^r$. Define the graph $G$ on vertex set $Q\times [r]$ such that for $(x,i),(y,j)\in Q\times [r]$ with $i<j$, 
$$(x,i)\sim (y,j)\quad\text{if }x_j<y_j\text{ and }x_{\ell}=y_{\ell}\text{ for every }\ell\in [r]\setminus \{j\}.$$

\begin{lemma}
    $G$ is triangle-free.
\end{lemma}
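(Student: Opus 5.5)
Suppose, for contradiction, that $(x,i),(y,j),(z,k)$ span a triangle in $G$. Adjacency requires distinct second coordinates, so $i,j,k$ are distinct; after relabelling we may assume $i<j<k$. The whole argument is to unpack the three adjacency conditions and show they are incompatible, using only the rule that an edge between $(u,a)$ and $(v,b)$ with $a<b$ is governed by the larger index $b$.

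The three conditions are as follows. The edge $(x,i)\sim(y,j)$ gives $x_j<y_j$ and $x_\ell=y_\ell$ for all $\ell\neq j$. The edge $(y,j)\sim(z,k)$ gives $y_k<z_k$ and $y_\ell=z_\ell$ for all $\ell\neq k$. The edge $(x,i)\sim(z,k)$ gives $x_k<z_k$ and $x_\ell=z_\ell$ for all $\ell\neq k$.

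Now look at coordinate $j$. Since $j\neq k$, the third condition gives $x_j=z_j$, and the second gives $y_j=z_j$. Hence $x_j=y_j$, which contradicts $x_j<y_j$ from the first condition. So no triangle exists.

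There is no real obstacle here. The only points needing care are the reduction to $i<j<k$ and the fact that in each edge the coordinate allowed to differ is the one indexed by the larger of the two indices.
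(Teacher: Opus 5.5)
Your proof is correct and is essentially the paper's argument. Both reduce to $i<j<k$ and then look at coordinate $j$: the edge $(x,i)(y,j)$ forces $x_j<y_j$, while the two edges through $(z,k)$ force $x_j=z_j=y_j$, a contradiction.
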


\begin{proof}
Assume for contradiction that $(x,i),(y,j),(z,k)\in Q\times [r]$ are the vertices of a triangle. Then $i,j,k$ are pairwise distinct, so we can assume that $i<j<k$. If $(x,i)\sim (y,j)$, then $x_j<y_j$. Moreover, $(y,j)\sim (z,k)$ implies $y_j=z_j$. But then $x_j<z_j$,  contradicting that $(x,i)\sim (z,k)$ forces $x_j=z_j$.
\end{proof}

Next, we bound the independence number of $G$. To this end, we prove a general lemma about the extremal number of certain simple multidimensional matrix patterns. We state it in a more general form than it is needed here, because its full power will come in handy later. Given a vector $x\in \mathbb{R}^n$, the \emph{sign-pattern} of $x$ is $\text{sign}(x)\in \{-1,0,1\}^n$, where $\text{sign}(x)_i$ has the same sign as $x_i$.

\begin{lemma}\label{lemma:multidim}
Let $N$ and $t$ be integers, and let $\eps\in \{-1,0,1\}^{t}$ be a sign-pattern containing $p$ non-zero entries. If $S\subset [N]^t$ such that $|S|>pN^{t-1}$, then there exist $x,y\in S$ such that $\text{sign}(x-y)=\eps$.
\end{lemma}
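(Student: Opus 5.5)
We prove the contrapositive: if no two points of $S$ realize the sign-pattern $\eps$, then $|S|\leq pN^{t-1}$. First, coordinates with $\eps_i=0$ can be absorbed. Let $Z=\{i:\eps_i=0\}$ and $P=[t]\setminus Z$, so $|P|=p$. Partition $S$ into fibers according to the values of the coordinates in $Z$. Two points in different fibers disagree somewhere on $Z$, so their difference cannot have sign-pattern $\eps$. Two points in the same fiber agree on $Z$, so $\text{sign}(x-y)=\eps$ holds exactly when the restrictions to $P$ satisfy the sign condition given by $\eps|_P$. There are $N^{t-p}$ fibers. Hence it suffices to prove the case $p=t$, that is, when all entries of $\eps$ are nonzero: any $S\subset[N]^p$ avoiding the pattern has $|S|\leq pN^{p-1}$. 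Multiplying by $N^{t-p}$ then gives $pN^{t-1}$. If $p=0$ the claim is trivial, since any $S$ with $|S|>0$ contains $x=y$ and $\text{sign}(0)=\eps$.

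Next, for $\eps\in\{-1,1\}^p$, flip coordinates $x_i\mapsto N+1-x_i$ wherever $\eps_i=-1$. This reduces the problem to $\eps=(1,\dots,1)$, i.e. we must find $x,y\in S$ with $x>y$ strictly in every coordinate. So suppose $S\subset[N]^p$ contains no strictly increasing pair. Define $f(x)$, for $x\in S$, as the length of the longest chain $x^{(1)}<x^{(2)}<\dots<x^{(m)}=x$ in $S$, strict in all coordinates. By assumption $f\equiv 1$. This gives the structure we need: $S$ is an antichain for the strict product order.

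The remaining step is to bound such antichains by $pN^{p-1}$. Map each $x\in S$ to its diagonal class $x-\min_i(x_i)\cdot\mathbf{1}+\mathbf{1}$, i.e. shift $x$ along $(1,\dots,1)$ until some coordinate equals $1$. Two distinct points on the same diagonal line $\{z+k\mathbf{1}\}$ differ by a multiple of $\mathbf{1}$, so they are strictly comparable. Therefore each diagonal line meets $S$ at most once. The number of diagonal lines meeting $[N]^p$ equals the number of points of $[N]^p$ with some coordinate equal to $1$, which is $N^p-(N-1)^p\leq pN^{p-1}$. Hence $|S|\leq pN^{p-1}$, proving the lemma.

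The only real subtlety, and the step I would check most carefully, is the fiber reduction for the zero coordinates. One must confirm that equality on $Z$ is required for the pattern, so points in different fibers never interact, and that the count multiplies correctly to give $pN^{t-1}$ and not something larger.
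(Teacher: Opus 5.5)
Your proof is correct, and it takes a different route from the paper.

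\textbf{The paper's route.} The paper inducts on $p$. It picks $j$ with $\eps_j\neq 0$ and deletes from $S$ the largest point (in coordinate $j$) on each of the $N^{t-1}$ axis-parallel lines in direction $j$. It then applies the induction hypothesis to the pattern $\eps'$ obtained by setting $\eps_j=0$. Finally it replaces $x'$ (if $\eps_j=1$) or $y'$ (if $\eps_j=-1$) by a point of $S$ higher up on its line, which creates the strict inequality in coordinate $j$. This costs $N^{t-1}$ per nonzero entry and needs no separate treatment of zero coordinates or of signs.

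\textbf{Your route.} Yours is a one-shot pigeonhole over a partition into lines on which any two points realize the pattern. The fiber reduction and the reflection can in fact be merged into a single step. For $p\geq 1$, the traces on $[N]^t$ of the lines $\{z+k\eps:k\in\mathbb{Z}\}$ partition $[N]^t$. Two distinct points on the same line differ by a nonzero multiple of $\eps$, so one of the two ordered pairs realizes $\eps$. Each such line meeting $[N]^t$ has a unique first point $z$ with $z-\eps\notin[N]^t$, so there are exactly $N^t-N^{t-p}(N-1)^p\leq pN^{t-1}$ of them.

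\textbf{What each buys.} Your approach gives the exact extremal number. The set of these first points is itself pattern-free, so $N^{t-p}(N^p-(N-1)^p)$ is sharp. The paper's bound $pN^{t-1}$ matches it only to leading order. The paper's induction, in return, is a uniform coordinate-by-coordinate argument with no case distinctions.

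A minor point: the chain-length function $f$ is superfluous. Pattern-freeness already says directly that $S$ is an antichain for the strict product order.
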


\begin{proof}
We prove this by induction on $p$. In case $p=0$, the statement is trivial as $S$ is non-empty and we can take any $x=y\in S$. Assume that $p\geq 1$ and let $j\in  [t]$ be an index such that $\eps_j\neq 0$. Let $\eps'\in \{-1,0,1\}^t$ be the sign-pattern  which agrees with $\eps$ on every index, except for $j$, and $\eps'_j=0$. Delete every element $z\in S$ for which there exists no $z'\in S$ with $z'_j>z_j$ and $z'_i=z_i$ for $i\in [t]\setminus \{j\}$; let $S'$ be the resulting set. We deleted at most $N^{t-1}$ elements, so $|S'|\geq |S|-N^{t-1}>(p-1)N^{t-1}$. Therefore, by our induction hypothesis, $S'$ contains two elements, $x'$ and $y'$, such that $\text{sign}(x'_i-y'_i)=\eps'_i$. In particular, $x'_j=y'_j$. If $\eps_j=1$, we can find an element $x\in S$ such that $x_j>x'_j$. But then taking $y=y'$, the pair $(x,y)$ satisfies the required  properties. Similarly, if $\eps_j=-1$, we can find $y\in S$ such that $y_j>y'_j$. But then setting $x=x'$, the pair $(x,y)$ suffices.
\end{proof}

\begin{lemma}\label{lemma:box_ind}
$\alpha(G)\leq (r^2+s)s^{r-1}.$
\end{lemma}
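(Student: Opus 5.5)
The plan is a direct counting argument on the sets of levels occupied at each point of $Q$; it does not need \Cref{lemma:multidim}, and if it is correct it even gives the stronger bound $s^r+rs^{r-1}$. Let $I$ be an independent set of $G$. For each $x\in Q$ put $L(x)=\{i\in[r]:(x,i)\in I\}$, so that $|I|=\sum_{x\in Q}|L(x)|$. Each $x$ with $L(x)\neq\emptyset$ has exactly one level $\min L(x)$, and $|L(x)|-1$ further levels, which I call \emph{non-minimal}. Hence
$$|I|\leq |Q|+\bigl|\{(x,j): j\in L(x),\ j\neq \min L(x)\}\bigr| = s^r + P,$$
where $P$ denotes the number of these non-minimal pairs. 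The whole task is therefore to show $P\leq r s^{r-1}$, which is at most the required $r^2 s^{r-1}$.

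To bound $P$, fix $j\in[r]$ and a line $\Lambda$ in direction $j$, that is, a set of $s$ points of $Q$ agreeing in all coordinates except the $j$-th. There are $s^{r-1}$ such lines for each $j$. I claim that $\Lambda$ contains at most one point $x$ for which $j\in L(x)$ is non-minimal.

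Suppose instead that $x,y\in\Lambda$ are two distinct such points, labelled so that $x_j<y_j$. Since $j$ is non-minimal in $L(x)$, there is some $i<j$ with $(x,i)\in I$. We also have $(y,j)\in I$. Now $x$ and $y$ agree off coordinate $j$, $x_j<y_j$ and $i<j$, so by definition $(x,i)\sim(y,j)$. This contradicts the independence of $I$. So the claim holds, and $P\leq r\cdot s^{r-1}$ by summing over the $r$ directions and the $s^{r-1}$ lines in each. Consequently $|I|\leq s^r+rs^{r-1}\leq (r^2+s)s^{r-1}$.

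The only step needing care is the orientation of the adjacency rule. The lower-level vertex must have the smaller $j$-th coordinate, which is why we use the lower level of $x$ (the point with smaller $x_j$) paired with level $j$ of $y$. Once this is checked, the charging scheme is routine. If one instead preferred to follow the paper's apparent route via \Cref{lemma:multidim}, one would encode $I$ as a subset of $[N]^{r+1}$ and find a forbidden sign pattern. That seems unnecessary given the direct argument above.
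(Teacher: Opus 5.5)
Your proof is correct. It is a sharper piece of bookkeeping than the paper's, built on the same core mechanism.

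\textbf{The paper's route.} The paper sets $I_i=\{x:(x,i)\in I\}$ and uses the inequality $\sum_i|I_i|\le|\bigcup_i I_i|+\sum_{i<j}|I_i\cap I_j|$. It then bounds each $|I_i\cap I_j|\le s^{r-1}$ by applying \Cref{lemma:multidim} to $I_i\cap I_j\subset\{0,\dots,s-1\}^r$ with the one-entry sign pattern $\eps_j=-1$. It does not encode $I$ in an $(r+1)$-dimensional grid, as you guessed. With a single nonzero entry, \Cref{lemma:multidim} is just pigeonhole over the $s^{r-1}$ lines in direction $j$, so this is the same mechanism as yours.

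\textbf{Where you differ.} The paper pays $s^{r-1}$ separately for every pair $i<j$. You charge each non-minimal level $j$ of a point to the direction-$j$ line through it. This works because the adjacency rule between levels $i<j$ depends only on the higher level $j$, so all lower levels are handled on the same lines at once.

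\textbf{What each buys.} Your bound is $|I|\le s^r+(r-1)s^{r-1}$, since level $1$ is never non-minimal; the paper gets $s^r+\binom r2 s^{r-1}$. In the proof of \Cref{thm:box_dim3} this lets one take $s=r$ instead of $s=r^2$. That improves the constant in $\chi\ge\Omega(\log n/\log\log n)$ by a factor of about $2$, but not the order.

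The paper's pairwise template is the one that carries over to \Cref{sect:boxd}. There, the forbidden sign pattern between levels $\mathbf a\neq\mathbf b\in R$ depends on both levels and has up to $k$ nonzero entries of mixed signs. So your single-direction charging does not obviously apply, while the paper's inequality plus \Cref{lemma:multidim} goes through verbatim.
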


\begin{proof}
Let $I\subset Q\times [r]$ be an independent set of $G$. For $i\in [r]$, let $I_i=\{x\in Q: (x,i)\in I\}$. 

\begin{claim}
For $1\leq i< j\leq r$, we have $|I_i\cap I_j|\leq s^{r-1}$.
\end{claim}
\begin{proof}
    Assume that $|I_i\cap I_j|>s^{r-1}$. Define the sign-pattern $\eps\in \{-1,0,1\}^r$ such that $\eps_j=-1$, and $\eps_{\ell}=0$ for $\ell\in [r]\setminus\{j\}$. By \Cref{lemma:multidim}, we can find $x,y\in I_i\cap I_j$ such that $\text{sign}(x-y)=\eps$. But then $x_{j}<y_j$ and $x_{\ell}=y_{\ell}$ for every $\ell\in [r]\setminus\{j\}$, which is equivalent to $(x,i)\sim (y,j)$, contradiction.
\end{proof}
From this claim, we get
$$|I|=\sum_{i=1}^r |I_i|\leq \left|\bigcup_{i=1}^r I_i\right|+\sum_{1\leq i<j\leq r} |I_i\cap I_j|\leq |Q|+\binom{r}{2}s^{r-1}<s^{r}+r^2s^{r-1}.$$
\end{proof}

The most difficult part of the proof is showing that $G$ can be realized as an intersection graph of boxes in $\mathbb{R}^3$. We will prove this by explicitly defining a box for every vertex of $G$.

\begin{lemma}\label{lemma:box_realization}
$G$ can be realized as an intersection graph of boxes in $\mathbb{R}^3$.
\end{lemma}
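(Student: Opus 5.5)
The plan is to build the boxes explicitly. Two coordinates will encode $x$ through two mixed-radix expansions read in opposite digit orders, and the third will take care of the level $i$. Put
$$\varphi(x)=\sum_{\ell=1}^r x_\ell s^{-\ell},\qquad \psi(x)=\sum_{\ell=1}^r x_\ell s^{-(r+1-\ell)}.$$
So $\varphi$ reads the digits in the order $1,2,\dots,r$ and $\psi$ reads them in the order $r,r-1,\dots,1$. The point is that the adjacency condition for $(x,i)\sim(y,j)$ with $i<j$ splits into three parts, each expressible by intervals:
\begin{itemize}
\item[(a)] $x_\ell=y_\ell$ for $\ell<j$ and $x_j<y_j$, which says that $\varphi(x)$ lies in the interval $[\,0.y_1\dots y_{j-1}0,\ 0.y_1\dots y_{j-1}y_j)$;
\item[(b)] $x_\ell=y_\ell$ for $\ell>j$, which says that $\psi(x)$ lies in the $\psi$-block of length $s^{-(r-j)}$ determined by the digits $y_r,\dots,y_{j+1}$.
\end{itemize}
These prefix and suffix blocks are nested or disjoint, which is what makes interval representations possible.

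Each vertex $(x,i)$ plays two roles. Toward lower levels it is the large object: its box should contain the prefix interval from (a) in the first coordinate and the suffix block from (b) in the second, with $j$ replaced by $i$. Toward higher levels it is the small object: it should behave like the point $(\varphi(x),\psi(x))$. I would therefore take the first two coordinates of the box $B(x,i)$ to be
$$\bigl[\,0.x_1\dots x_{i-1}0,\ \varphi(x)\,\bigr]\times\bigl[\,\text{$\psi$-block of }x_r\dots x_{i+1}\,\bigr].$$
This contains the point $(\varphi(x),\psi(x))$ and also the region that the lower-level neighbours must hit. The third coordinate separates the two roles. Give $(x,i)$ the third-coordinate interval $[i,\,r+1-\delta_i(x)]$, or some similar staircase in which the low end records the level. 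Then two boxes of different levels meet in the third coordinate only in a controlled way, and boxes of the same level can be pushed apart by making the third coordinate depend on the digit $x_i$, for instance by shifting by $x_i/(2s)$ inside a unit slot.

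With a concrete choice fixed, the verification has three steps. First, if $(x,i)\sim(y,j)$ with $i<j$, the projections intersect in all three coordinates: $\varphi(x)$ lies in the first interval of $(y,j)$, $\psi(x)$ lies in the block of $(y,j)$, and the third intervals overlap. Second, if $i<j$ and the pair is not an edge, some projection is disjoint. Either the prefixes up to $j-1$ differ, in which case the prefix intervals of the two boxes are disjoint or the interval of $(x,i)$ lies entirely to the left or right of the interval of $(y,j)$; or $x_j\ge y_j$; or the suffix blocks differ. Third, vertices on the same level never intersect. This is handled by the third coordinate, or by the observation that same-level boxes with different suffix blocks are already disjoint in the second coordinate.

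The main obstacle is the non-adjacent pairs at different levels where the first-coordinate intervals overlap without the intended containment. The interval of $(x,i)$ is not just the point $\varphi(x)$; it extends leftwards to the start of its own prefix block. So it can poke into the interval of $(y,j)$ even when $x_j\ge y_j$, for example when the prefixes agree up to $i-1$ but not up to $j-1$. My first attempt at a fix is to shrink the first interval of $(x,i)$ to a tiny neighbourhood of $\varphi(x)$ and let the third coordinate carry the large-object role. Concretely, split each box's third coordinate into a low part, where the box is point-like in the first two coordinates, and a high part at height $i$, where it is wide. Then box-ness forces a single product, so I would instead arrange the levels so that the wide part of $(y,j)$ sits at a height where only lower-level boxes are present. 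This means using the staircase in the third coordinate, interval $[\,i,\,i+1\,]$ for point-like behaviour and the large role realised by extending down to height $1$. The first two coordinates must then be adjusted so that the extended box is still exactly the prefix block times the suffix block. Checking that this bookkeeping of heights and intervals is consistent is where I expect to spend most of the effort.
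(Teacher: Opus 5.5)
Your ingredients are the right ones: digits read in both orders, prefix blocks and suffix blocks. They correspond to the paper's $\overline{x}$ and $\hat{x}_{[i]}$. There is a genuine gap, though: you never fix a valid family of boxes, and the family you sketch fails for a structural reason that your third coordinate cannot repair.

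Take $i<j$ and any $x$ agreeing with $y$ outside coordinate $j$ (including $x=y$).
\begin{itemize}
\item Both first-coordinate intervals contain the point $0.x_1\dots x_{j-1}0$.
\item The $\psi$-block of $(x,i)$ lies inside that of $(y,j)$.
\end{itemize}
So the first two projections meet \emph{whether or not} $x_j<y_j$. Your item (a) describes where the point $\varphi(x)$ lies, but the box of $(x,i)$ is not that point.

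Hence the strict inequality, the only directed part of the adjacency, would have to come from the third coordinate. Every variant you describe (a staircase in the level, shifted by the own digit $x_i$) gives $(x,i)$ the same third interval for all such $x$, because $x_i=y_i$. Compare $x=y$ (non-adjacent) with the $x$ obtained from $y$ by setting $x_j=0<y_j$ (adjacent whenever $y_j\ge 1$): this gives a contradiction. So step two of your verification, ``or $x_j\ge y_j$'' yields a disjoint projection, is false for this family. Your last paragraph, which would have to resolve this, leaves the construction unspecified.

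The paper makes two changes.

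\emph{A mirrored prefix coordinate.} Your first coordinate is, up to scaling (base $b=10s$ and a factor $2$), exactly the paper's $Y_{x,i}=[2\overline{x}-(2x_i+1)b^{r-i},2\overline{x}]$. The paper adds $X_{x,i}=[2\overline{x},2\overline{x}+b^{r-i}]$, which extends \emph{rightward} from the same anchor over the prefix-$i$ block. The leftward poking you worried about is then harmless, because a lexicographically larger $x$ would also have to be reached by the short rightward interval of $(y,j)$. For $i<j$, the $X$- and $Y$-projections both meet iff $x,y$ agree on coordinates $<j$ and, at coordinate $j$, either agree or satisfy $x_j<y_j$.

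\emph{No coordinate spent on levels.} The third coordinate is your suffix block, truncated by the box's own digit: $Z_{x,i}=[2\hat{x}_{[i]}-2b^{i-1},2\hat{x}_{[i]}+(2x_i-1)b^{i-1}]$.
\begin{itemize}
\item For $i<j$, $Z_{x,i}$ meets $Z_{y,j}$ iff $x,y$ agree on coordinates $>j$ and $x_j<y_j$. This is where the strict inequality is encoded.
\item For $i=j$, $Z$ forces agreement above $i$ while $X$ forces agreement up to $i$, so distinct vertices of the same level give disjoint boxes.
\item $Z_{x,i}\cap Z_{x,j}=\emptyset$ for $i\neq j$.
\end{itemize}
Combining, the first and last coordinates where $x$ and $y$ differ both equal $j$, and $x_j<y_j$, which is exactly adjacency.
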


\begin{proof}
Let $b=10s$, and let 
$$\overline{x}=\sum_{h=1}^{r}x_hb^{r-h}\quad\text{and}\quad\hat{x}_{[i]}=\sum_{h=i+1}^r x_h b^{h-1},$$
that is, $\overline{x}$ is equal to the number $x_1x_2\dots x_r$ written in base $b$, while $\hat{x}_{[i]}$ is the base-$b$ number with digits $x_r,x_{r-1},\dots,x_{i+1}$ followed by $i$ zeros. For $(x,i)\in Q\times [r]$, define the box $B_{x,i}=X_{x,i}\times Y_{x,i}\times Z_{x,i}$, where
\begin{align*}
X_{x,i}&=[2\overline{x},2\overline{x}+b^{r-i}],\\
Y_{x,i}&=[2\overline{x}-(2x_i+1)b^{r-i},2\overline{x}],\\
Z_{x,i}&=[2\hat{x}_{[i]}-2b^{i-1},2\hat{x}_{[i]}+(2x_i-1)b^{i-1}].
\end{align*}
Our goal is to prove that $B_{x,i}\cap B_{y,j}\neq \emptyset$ if and only if $(x,i)\sim (y,j)$. 

First, consider the case $x=y$. We show that $Z_{x,i}$ and $Z_{x,j}$ are disjoint, so $B_{x,i}$ and $B_{x,j}$ are also disjoint. Assume that $i<j$, then the right endpoint of $Z_{x,j}$ is $$2\hat{x}_{[j]}+(2x_j-1)b^{j-1}=2\hat{x}_{[j-1]}-b^{j-1},$$
while the left endpoint of $Z_{x,i}$ is 
$$2\hat{x}_{[i]}-2b^{i-1}>2\hat{x}_{[j-1]}-b^{j-1}.$$
Thus, $Z_{x,i}$ is completely to the right of $Z_{x,j}$.

In the rest of the argument, we assume that $x\neq y$. Let $p$ be the smallest index such that $x_p\neq y_p$, and let $q$ be the largest index such that $x_q\neq y_q$. Furthermore, without loss of generality, assume that $i\leq j$.

\begin{claim}\label{claim:boxx}
$$X_{x,i}\cap X_{y,j}\neq \emptyset \quad\Leftrightarrow\quad \begin{cases}x_p<y_p\text{ and }p>i\text{, or}\\ x_p>y_p\text{ and }p>j. \end{cases}$$
\end{claim}
\begin{proof}
We have $X_{x,i}\cap X_{y,j}\neq \emptyset$ if and only if
$$2\overline{x}\leq 2\overline{y}+b^{r-j}\quad\text{and}\quad 2\overline{y}\leq 2\overline{x}+b^{r-i}.$$ If $x_p<y_p$, then $\overline{x}<\overline{y}$, so the first inequality is satisfied, and the second inequality is satisfied if and only if $y_1=x_1,\dots,y_i=x_i$.
Similarly, if $x_p>y_p$, then $\overline{x}>\overline{y}$, so the second inequality is satisfied, and the first inequality is satisfied if $y_1=x_1,\dots,y_j=x_j$.
\end{proof}

\begin{claim}\label{claim:boxy}
$$Y_{x,i}\cap Y_{y,j}\neq \emptyset \quad\Leftrightarrow\quad \begin{cases}x_p<y_p\text{ and }p\geq j\text{, or}\\ x_p>y_p\text{ and }p\geq i. \end{cases}$$
\end{claim}

\begin{proof}
We have $Y_{x,i}\cap Y_{y,j}\neq \emptyset$ if and only if
$$2\overline{x}-(2x_i+1)b^{r-i}\leq 2\overline{y}\quad\text{and}\quad 2\overline{y}-(2y_j+1)b^{r-j}\leq 2\overline{x}.$$ If $x_p<y_p$, then $\overline{x}<\overline{y}$, so the first inequality is satisfied, and the second is satisfied if $x_1=y_1,\dots,x_{j-1}=y_{j-1}$. Similarly, if $x_p>y_p$, then $\overline{x}>\overline{y}$, so the second inequality is satisfied, and the first is satisfied if $x_1=y_1,\dots,x_{i-1}=y_{i-1}$.
\end{proof}

\begin{claim}\label{claim:boxz}
$$Z_{x,i}\cap Z_{y,j}\neq \emptyset \quad\Leftrightarrow\quad \begin{cases}i=j\text{ and }q\leq i\text{, or}\\i<j\text{ and }q\leq j\text{ and }x_j<y_j.
\end{cases}$$
\end{claim}
\begin{proof}
We have $Z_{x,i}\cap Z_{y,j}\neq \emptyset$ if and only if $$2\hat{x}_{[i]}-2b^{i-1}\leq 2\hat{y}_{[j]}+(2y_j-1)b^{j-1}\quad\text{and}\quad 2\hat{y}_{[j]}-2b^{j-1}\leq 2\hat{x}_{[i]}+(2x_i-1)b^{i-1}.$$
Consider three cases.

\medskip
\noindent
\textbf{Case 1.} $i=j$.

\noindent
If $\hat{x}_{[i]}\leq \hat{y}_{[i]}$, then the first inequality is satisfied. Also, the second inequality is then satisfied if and only if $x_r=y_r,\dots,x_{i+1}=y_{i+1}$. The same conclusion holds if $\hat{y}_{[i]}\leq \hat{x}_{[i]}$. Thus, $q\leq i$.

\medskip
\noindent
\textbf{Case 2.}  $i<j$ and $\hat{x}_{[i]}< \hat{y}_{[j]}$.

\noindent
The first inequality is satisfied. Moreover, $\hat{x}_{[i]}< \hat{y}_{[j]}$ implies that $q\geq j+1$. But then $\hat{y}_{[j]}-\hat{x}_{[i]}\geq b^{q-1}/2\geq b^{j}/2$, so the second inequality cannot be satisfied.

\medskip
\noindent
\textbf{Case 3.} $i<j$ and $\hat{y}_{[j]}\leq \hat{x}_{[i]}$.

\noindent
    The second inequality is satisfied. The first inequality is satisfied if and only if
    $x_r=y_r,\dots,x_{j+1}=y_{j+1}$ and $x_j<y_j$. Thus, $q\leq j$ and $x_j<y_j$.
\end{proof}
Assume that $B_{x,i}\cap B_{y,j}\neq \emptyset$. Then we have 
$$X_{x,i}\cap X_{y,j}\neq \emptyset\quad\text{and}\quad Y_{x,i}\cap Y_{y,j}\neq  \emptyset\quad\text{and}\quad Z_{x,i}\cap Z_{y,j}\neq \emptyset.$$
First, if $i=j$, then \Cref{claim:boxx} implies that $p>i$, and \Cref{claim:boxz} implies $q\leq i$. As $p\leq q$, this is a contradiction, so we must have $i<j$. Next, the combination of \Cref{claim:boxx} and \Cref{claim:boxy} implies that $p\geq j$. In contrast, \Cref{claim:boxz} implies $q\leq j$, so we must have $p=q=j$. Then  \Cref{claim:boxx} ensures that $x_p<y_p$. To summarize, we proved that $x$ and $y$ only differ in coordinate $p=j$, and $x_j<y_j$. This is exactly the condition which ensures that $(x,i)\sim (y,j)$ in $G$.

To finish the proof, it remains to check that if $(x,i)\sim (y,j)$ for some $i<j$, then $B_{x,i}\cap B_{y,j}\neq \emptyset$. This follows by checking that the right-hand sides of \Cref{claim:boxx}, \Cref{claim:boxy}, and \Cref{claim:boxz} are all satisfied. We have $p=j$, $x_p<y_p$ and $i<p$, so the first formula of \Cref{claim:boxx} is true. Moreover, $x_p<y_p$ and $p\geq j$, so the first formula of \Cref{claim:boxy} is true. Finally, we have $i<j$, $p=q\leq j$, and $x_j<y_j$, so the second formula of \Cref{claim:boxz} is satisfied. This finishes the proof.
\end{proof}

\noindent
\textbf{Discussion.} The above construction was found by ChatGPT Astra. While this construction might seem like it came out of nowhere (like many other brilliant constructions in mathematics), we try to illuminate how it uses ideas from known constructions of triangle-free graphs with small independence number. (So we can also claim some credit.)

Codenotti, Pudl\'ak, and Resta \cite{CPR} (see also Alon and Pudl\'ak \cite{AP01}) introduced a simple, yet powerful method to turn a $C_4$-free graph with many edges into a triangle-free graph with small independence number. Given a bipartite graph $H$, whose vertex classes are totally ordered, the \emph{superline-graph} of $H$ is the graph $G$ whose vertices are the edges of $H$, and two edges $ab,cd\in E(H)$ are joined by an edge in $G$ if $a<c$, $b<d$, and $ad\in E(H)$. If $H$ is $C_4$-free with vertex classes of sizes $n_1$ and $n_2$, and $H$ has $m$ edges, then $G$ is a triangle-free graph with $m$ vertices and independence number less than $n_1+n_2$. Alon and Pudl\'ak \cite{AP01} used the superline-graph construction to provide  explicit triangle-free graphs with small independence number. Later, we (together with Suk) \cite{SukTom21, TomSemilin} found geometric applications of superline-graph constructions. In particular, in \cite{TomSemilin}, using this approach, we constructed triangle-free semilinear graphs of polylogarithmic chromatic numbers. Here, a graph is semilinear of bounded complexity if its vertices correspond to points in a bounded dimensional real space, and its edges are defined by a bounded number of linear inequalities. In particular, box intersection graphs in $\mathbb{R}^d$ are the prototypical examples of semilinear graphs of bounded complexity. Unfortunately, where \cite{TomSemilin} fell short compared to \Cref{thm:box_dim3} was that the constructed semilinear graph is not realizable as an intersection graph of boxes.

However, the graph $G$ we constructed during the proof of \Cref{thm:box_parametric} is also a superline-graph. Indeed, let $H$ be the bipartite graph, with one vertex class $Q=\{0,\dots,s-1\}^r$, and the other vertex class $B$ composed of all the one-dimensional slices of $Q$. That is, the elements of $B$ are the sets of the form $S_{x,i}$ for $x\in Q$, $i\in [r]$, which contain the $s-1$ elements of $Q$ that differ from $x$ only in coordinate $i$, and also $x\in S_{x,i}$. The edges of $H$ are the pairs $(x,S_{x,i})$ for every $i\in [r]$. Then $H$ is clearly $C_4$-free, as any $x\neq y$ share at most one one-dimensional slice. Order the elements of $Q$ by the lexicographic ordering, and order the slices in any order respecting that $S_{x,i}<S_{y,j}$  if $i<j$.  Then the superline-graph of $H$ is exactly $G$. This provides some explanation of how $G$ could have been found. However, the part that is unclear to us is why one should expect $G$ to be realizable as an intersection graph of boxes in $\mathbb{R}^3$.

\section{Box-intersection graphs in higher dimensions}\label{sect:boxd}

In this section, we extend the previous construction to higher dimensions to prove \Cref{thm:box_dimd}. The main idea is to consider our 3-dimensional family, and take the Cartesian products of its members. After sparsifying the resulting family, we get the desired configuration of boxes. We prove the following technical result.

\begin{theorem}\label{thm:main_box2}
   Let $k$ be a positive integer and $\delta>0$, then if $r$ is a sufficiently large integer with respect to $k,\delta$, then there exists an integer $f\leq r^k$ satisfying
   $$f=f(r,k)\geq \begin{cases}\Omega_k(r^{k/2})&\text{if }k\text{ is even},\\
   r^{\lceil k/2\rceil-\delta}&\text{if }k\text{ is odd}\end{cases}$$ such that the following holds for every positive integer $s$.  There exists a triangle-free intersection graph of  $fs^{kr}$ boxes in $\mathbb{R}^{3k}$ with independence number at most $(s+kf^2)s^{rk-1}$.
\end{theorem}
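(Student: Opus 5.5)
The plan is to take a $k$-fold product of the construction from \Cref{thm:box_parametric} and then sparsify the index part. Let $G$ be the graph on $Q\times[r]$ from \Cref{sect:box3}, realized by boxes $B_{x,i}\subset\mathbb{R}^3$ via \Cref{lemma:box_realization}. Fix a set $F\subset[r]^k$ of size $f$, to be chosen later. The vertices are the tuples $(\mathbf{x},I)$ with $\mathbf{x}=(x^1,\dots,x^k)\in Q^k$ and $I=(i_1,\dots,i_k)\in F$, so there are $fs^{kr}$ of them. To such a tuple we assign the box $B_{x^1,i_1}\times\dots\times B_{x^k,i_k}\subset\mathbb{R}^{3k}$.

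Two product boxes meet iff, in every coordinate $t$, either $(x^t,i_t)=(y^t,j_t)$ or $(x^t,i_t)\sim(y^t,j_t)$ in $G$. We will use two facts from the proof of \Cref{lemma:box_realization}: $B_{x,i}\cap B_{x,j}=\emptyset$ for $i\neq j$, and $(x,i)\not\sim(y,i)$. Hence if $i_t=j_t$ and the boxes meet, then $x^t=y^t$. Consequently two distinct intersecting vertices have distinct index tuples.

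\emph{Triangle-freeness.} Consider a triangle $u,v,w$. Its three index tuples are pairwise distinct. In each coordinate $t$, the three vertices of $G$ are pairwise equal or adjacent. Since $G$ is triangle-free, they cannot be pairwise distinct there, so two of the indices $i_t,j_t,\ell_t$ coincide. It therefore suffices to choose $F$ with the following \emph{$3$-separating property}: every three distinct elements of $F$ are pairwise distinct in some coordinate.

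\emph{Independence number.} Let $\mathcal{I}$ be an independent set, and for $I\in F$ let $A_I=\{\mathbf{x}\in Q^k:(\mathbf{x},I)\in\mathcal{I}\}\subset\{0,\dots,s-1\}^{rk}$. Fix $I\neq J$ in $F$ and let $D=\{t: i_t\neq j_t\}\neq\emptyset$. Define a sign pattern $\eps\in\{-1,0,1\}^{rk}$ as follows. For each $t\in D$, put a nonzero sign at position $(t,\max(i_t,j_t))$: sign $-1$ if $i_t<j_t$, so that $x^t_{j_t}<y^t_{j_t}$, and sign $+1$ otherwise. All other entries are $0$. Then $\eps$ has at most $k$ nonzero entries. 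If $\mathbf{x},\mathbf{y}\in A_I\cap A_J$ satisfy $\text{sign}(\mathbf{x}-\mathbf{y})=\eps$, then $(\mathbf{x},I)$ and $(\mathbf{y},J)$ are adjacent coordinatewise, which contradicts independence. So \Cref{lemma:multidim} gives $|A_I\cap A_J|\le ks^{rk-1}$. Inclusion–exclusion exactly as in \Cref{lemma:box_ind} then yields
$$|\mathcal{I}|\le s^{rk}+\binom{f}{2}ks^{rk-1}\le (s+kf^2)s^{rk-1}.$$

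\emph{Constructing $F$ — the main obstacle.} It remains to find a $3$-separating $F\subset[r]^k$ of the stated size. For $k=2$ the diagonal $\{(a,a)\}$ works and gives $f=r$. For even $k$, I would first try an algebraic code over $\mathbb{F}_q$ with $q\approx r$: use $k/2$ variables and $k$ linear forms in general position. Two distinct points then agree in at most $k/2-1$ coordinates, and one needs the three pairwise agreement sets of a triple not to cover $[k]$. This is immediate for $k=4$, but a naive union bound fails for larger $k$. So I expect to need either a more careful concatenation or product of small blocks, or a refined counting argument.

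For the odd case (and as a fallback), I would take a random subset of $[r]^k$ and delete one element from each bad triple. A triple is bad if every coordinate contains a collision. The naive estimate, where each coordinate is bad with probability $\approx 3/r$, only gives size $r^{k/2-\delta}$. Reaching $r^{\lceil k/2\rceil-\delta}$ requires exploiting the structure of collisions: a bad triple has, in each coordinate, a specific colliding pair. So I would classify bad triples by the partition of $[k]$ into the three sets of coordinates where each pair collides, and count each class by its number of free parameters, choosing the sampling density so that the expected number of bad triples is below half the expected size of the sample. Getting this count to match the exponent $\lceil k/2\rceil$ is where I expect the real work to be.
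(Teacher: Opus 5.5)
\textbf{Verdict: the proposal has a genuine gap.} Your reduction is exactly the paper's. The product boxes realize the strong power $G^{\boxtimes k}$, and restricting the index tuples to a three-separating set gives triangle-freeness by the same argument. Your sign pattern, with one nonzero entry per coordinate block at position $\max(i_t,j_t)$, is the one the paper uses, so \Cref{lemma:multidim} and inclusion--exclusion give $s^{rk}+\binom{f}{2}ks^{rk-1}\le (s+kf^2)s^{rk-1}$. What is missing is the existence of a three-separating $F\subset[r]^k$ of the required size, which is the entire content of the bound on $f$. The paper does not prove this either. It imports it: Alon's deletion argument for even $k$, and the construction of Shangguan and Ge (their Corollary 7.2) for odd $k$.

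\textbf{Even $k$.} You are closer than you think, because the naive deletion has no $\delta$-loss. In a fixed coordinate, only $3r^2-2r\le 3r^2$ of the $r^3$ value triples have a collision. So a $p$-random subset of $[r]^k$ contains at most $3^kr^{2k}p^3$ bad triples in expectation. Take $p=cr^{-k/2}$ with $c$ small in terms of $k$ and delete one element per bad triple; this leaves $\Omega_k(r^{k/2})$ elements in expectation. That settles the even case, and the algebraic-code detour is unnecessary.

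\textbf{Odd $k$: the actual gap.} This is the case that matters, since \Cref{thm:box_dimd} uses $k=2h-1$ and needs $f\ge r^{(k+1)/2-\delta}$, a polynomial gain over $r^{k/2}$.
\begin{itemize}
\item \emph{Refined counting cannot help.} Consider triples where $c$ agrees with $a$ on a nonempty proper $P\subset[k]$ and with $b$ on $[k]\setminus P$, for generic $a,b$. This class alone contains $\Theta_k(r^{2k})$ bad triples. Hence any deletion argument starting from a uniformly random subset stalls at about $r^{k/2}$, up to lower-order factors.
\item \emph{The $\delta$-loss is intrinsic.} Take $k=3$. If $a,b\in F$ agree in two coordinates, no third element may share either of their values in the remaining coordinate. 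So after deleting $O(r)$ elements, any two elements of $F$ agree in at most one coordinate. Viewed as a $3$-partite $3$-graph on $3r$ vertices, $F$ is then linear and triangle-free. The Ruzsa--Szemer\'edi theorem therefore gives $|F|=o(r^2)$.
\item \emph{What is needed is a Behrend-type construction.} For $k=3$, take $F=\{(x,x+y,x+2y):x\in[r/3],\ y\in A\}$ with $A\subset[r/3]$ free of $3$-term progressions. This set has size $r^{2-o(1)}$ and is three-separating. Any two of its elements agree in at most one coordinate, and a bad triple would force a nontrivial solution of $y'+y''=2y$ in $A$. The general odd case is the corresponding result of Shangguan and Ge.
\end{itemize}
Without that result or an equivalent additive construction, your argument only yields $f=\Omega_k(r^{k/2})$ for odd $k$. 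That gives chromatic number $(\log n)^{k/2-o(1)}$ in $\mathbb{R}^{3k}$ rather than the claimed $(\log n)^{(k+1)/2-o(1)}$.
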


Assuming this theorem, \Cref{thm:box_dimd} follows almost immediately.

\begin{proof}[Proof of \Cref{thm:box_dimd} given \Cref{thm:main_box2}]
    Set $k=2h-1$, fix some $\delta>0$, and let $r$ be sufficiently large with respect to $k,\delta$ so that $f\geq r^{h-\delta}$ exists with the requirements of \Cref{thm:main_box2}. Let $s=kf^2$ and $n=fs^{kr}$. Then there exists a triangle-free intersection graph of $n=fs^{kr}$ boxes in $\mathbb{R}^{3k}=\mathbb{R}^{6h-3}$ with independence number at most 
$$(s+kf^2)s^{rk-1}=2s^{kr}=\frac{2}{f}n.$$
Here, we have $n\leq s^{rk+1}$, so 
$$\log n\leq (rk+1)\log s=O_k(r\log r).$$
Therefore,  
$$f\geq r^{h-\delta}\geq \Omega_k\left(\left(\frac{\log n}{\log \log n}\right)^{h-\delta}\right).$$
This finishes the proof.
\end{proof}

The rest of the section is devoted to the proof of \Cref{thm:main_box2}. Let $Q$ and $G$ be defined as in the previous section, so $Q=\{0,\dots,s-1\}^r$, and $G$ is the  graph on vertex set $Q\times [r]$ such that for $(x,i),(y,j)\in Q\times [r]$ with $i<j$, 
$$(x,i)\sim (y,j)\quad\text{if }x_j<y_j\text{ and }x_{\ell}=y_{\ell}\text{ for every }\ell\in [r]\setminus \{j\}.$$ 
For $(x,i)\in Q\times [r]$, let $B_{x,i}$ be a box in $\mathbb{R}^3$ such that the intersection graph of $\{B_{x,i}:(x,i)\in Q\times [r]\}$ is $G$. A collection of such boxes exists by \Cref{lemma:box_realization}. 

 We define a family of boxes in $\mathbb{R}^{3k}$ by taking the Cartesian product of $k$-tuples of boxes $B_{x,i}$. For $(\mathbf{x},\mathbf{i})\in Q^k\times [r]^k$, define the box $B_{\mathbf{x},\mathbf{i}}$ in $\mathbb{R}^{3k}$ such that
$$B_{\mathbf{x},\mathbf{i}}=\prod_{h=1}^k B_{\mathbf{x}_h,\mathbf{i}_h}.$$
Note that the intersection graph of these boxes is the \emph{strong product} $\mathbf{G}=G^{\boxtimes k}$. Here, two vertices $(\mathbf{x},\mathbf{i}),(\mathbf{y},\mathbf{j})\in Q^k\times [r]^k$ are joined by an edge if for every $h\in [k]$, either $(\mathbf{x}_h,\mathbf{i}_h)=(\mathbf{y}_h,\mathbf{j}_h)$, or $(\mathbf{x}_h,\mathbf{i}_h)\sim (\mathbf{y}_h,\mathbf{j}_h)$ in $G$. As $G^{\boxtimes k}$ is not necessarily triangle-free,  we only keep a sparse subset of the vertices. Say that a subset $R\subset [r]^k$ is \emph{three-separating} if for every three distinct $\mathbf{a},\mathbf{b},\mathbf{c}\in R$ there is a coordinate $h\in [k]$ such that $\mathbf{a}_h,\mathbf{b}_h,\mathbf{c}_h$ are pairwise distinct.

\begin{lemma}
If $R\subset [r]^k$ is three-separating, then $Q^k\times R$ induces a triangle-free subgraph of $\mathbf{G}$.
\end{lemma}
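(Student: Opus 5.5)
Suppose for contradiction that $(\mathbf{x},\mathbf{a}),(\mathbf{y},\mathbf{b}),(\mathbf{z},\mathbf{c})\in Q^k\times R$ form a triangle in $\mathbf{G}$. The plan is to find a single coordinate $h$ at which the three projections are pairwise distinct and pairwise adjacent in $G$, which contradicts the triangle-freeness of $G$ proved in the previous section.

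The first step is to show that $\mathbf{a},\mathbf{b},\mathbf{c}$ are pairwise distinct. Suppose instead that $\mathbf{a}=\mathbf{b}$. Then for every $h\in[k]$ we have $\mathbf{a}_h=\mathbf{b}_h$. But $G$ has no edges between vertices with the same second coordinate, since adjacency requires $i<j$. So adjacency in $\mathbf{G}$ forces $(\mathbf{x}_h,\mathbf{a}_h)=(\mathbf{y}_h,\mathbf{b}_h)$ for every $h$. Hence $\mathbf{x}=\mathbf{y}$, and the two vertices coincide, which is impossible. The same argument applies to the other two pairs.

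The second step uses the hypothesis on $R$. Since $R$ is three-separating and $\mathbf{a},\mathbf{b},\mathbf{c}$ are distinct elements of $R$, there is a coordinate $h\in[k]$ with $\mathbf{a}_h,\mathbf{b}_h,\mathbf{c}_h$ pairwise distinct. The three vertices $(\mathbf{x}_h,\mathbf{a}_h)$, $(\mathbf{y}_h,\mathbf{b}_h)$, $(\mathbf{z}_h,\mathbf{c}_h)$ of $G$ are then pairwise distinct, because their second coordinates differ.

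Finally, by the definition of the strong product, each pair among these three is either equal or adjacent in $G$. Since they are distinct, each pair is adjacent, so they form a triangle in $G$, contradicting the lemma that $G$ is triangle-free. There is no real obstacle here; the only point needing care is the first step, where one uses that $G$ has no edges within a fixed $i$-class, so that equal index vectors force equal vertices.
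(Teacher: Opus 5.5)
Your proof is correct and follows essentially the same argument as the paper. Both rule out equal index vectors because $G$ has no edges between vertices with the same second coordinate. Both then use the three-separating property to find a coordinate $h$ where the three projections are distinct and pairwise adjacent in $G$, contradicting triangle-freeness of $G$.
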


\begin{proof}
Assume to the contrary that $(\mathbf{x},\mathbf{i}),(\mathbf{y},\mathbf{j}),(\mathbf{z},\mathbf{k})\in Q^k\times R$ are three distinct vertices forming a triangle in $\mathbf{G}$. If $\mathbf{i}=\mathbf{j}$, then $(\mathbf{x},\mathbf{i})\not\sim_{\mathbf{G}} (\mathbf{y},\mathbf{j})$. Indeed, if $h\in [k]$ is a coordinate such that $\mathbf{x}_h\neq \mathbf{y}_h$, then $(\mathbf{x}_h,\mathbf{i}_h)\not\sim (\mathbf{y}_h,\mathbf{j}_h)$ in $G$. Therefore, we may assume that $\mathbf{i},\mathbf{j},\mathbf{k}$ are pairwise distinct. But then as $R$ is three-separating, there is a coordinate $h$ such that $\mathbf{i}_h,\mathbf{j}_h,\mathbf{k}_h$ are distinct. But then $(\mathbf{x}_h,\mathbf{i}_h),(\mathbf{y}_h,\mathbf{j}_h),(\mathbf{z}_h,\mathbf{k}_h)$ are three distinct vertices of $G$, so as $G$ is triangle-free, there is no edge between at least two of them. This is a contradiction.
\end{proof}

Next, we bound the independence number of $\mathbf{G}[Q^k\times R]$. The proof of our bound proceeds similarly to the proof of \Cref{lemma:box_ind}.

\begin{lemma}
For every $R\subset [r]^k$, the independence number of $\mathbf{G}[Q^k\times R]$ is at most 
$$s^{rk}+k|R|^2s^{rk-1}.$$
\end{lemma}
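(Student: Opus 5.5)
Mirroring the proof of \Cref{lemma:box_ind}, let $I\subset Q^k\times R$ be independent in $\mathbf{G}[Q^k\times R]$, and for $\mathbf{a}\in R$ set $I_{\mathbf{a}}=\{\mathbf{x}\in Q^k:(\mathbf{x},\mathbf{a})\in I\}$. Then
$$|I|=\sum_{\mathbf{a}\in R}|I_{\mathbf{a}}|\leq \Bigl|\bigcup_{\mathbf{a}\in R} I_{\mathbf{a}}\Bigr|+\sum_{\{\mathbf{a},\mathbf{b}\}\subset R}|I_{\mathbf{a}}\cap I_{\mathbf{b}}|\leq s^{rk}+\binom{|R|}{2}\max_{\mathbf{a}\neq\mathbf{b}}|I_{\mathbf{a}}\cap I_{\mathbf{b}}|.$$
So it suffices to show that $|I_{\mathbf{a}}\cap I_{\mathbf{b}}|\leq 2k s^{rk-1}$ for distinct $\mathbf{a},\mathbf{b}\in R$. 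This gives the bound $s^{rk}+k|R|^2s^{rk-1}$. In fact the bound $k s^{rk-1}$ holds, with room to spare.

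Fix distinct $\mathbf{a},\mathbf{b}\in R$. Let $H=\{h\in[k]:\mathbf{a}_h\neq\mathbf{b}_h\}$, which is nonempty, and put $p=|H|\le k$. For each $h\in H$, let $\{u_h,w_h\}=\{\mathbf{a}_h,\mathbf{b}_h\}$ with $u_h<w_h$. Set $\eps_h=-1$ if $\mathbf{a}_h<\mathbf{b}_h$ and $\eps_h=+1$ otherwise. The goal is to find $\mathbf{x},\mathbf{y}\in I_{\mathbf{a}}\cap I_{\mathbf{b}}$ with $(\mathbf{x},\mathbf{a})\sim(\mathbf{y},\mathbf{b})$ in $\mathbf{G}$. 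Such a pair needs three things:
\begin{itemize}
\item $\mathbf{x}_h=\mathbf{y}_h$ for $h\notin H$, since then $\mathbf{a}_h=\mathbf{b}_h$ and the only option is equality;
\item for $h\in H$, $\mathbf{x}_h$ and $\mathbf{y}_h$ agree outside coordinate $w_h$;
\item in coordinate $w_h$, the vertex with index $u_h$ has the smaller entry.
\end{itemize}
If $\mathbf{a}_h<\mathbf{b}_h$, the last condition means $(\mathbf{x}_h)_{w_h}<(\mathbf{y}_h)_{w_h}$. If $\mathbf{a}_h>\mathbf{b}_h$, the edge goes $(\mathbf{y}_h,\mathbf{b}_h)\sim(\mathbf{x}_h,\mathbf{a}_h)$ and means $(\mathbf{y}_h)_{w_h}<(\mathbf{x}_h)_{w_h}$.

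Identify $Q^k$ with $\{0,\dots,s-1\}^{rk}$, indexing coordinates by pairs $(h,\ell)$. Define the sign pattern $\boldsymbol{\eps}\in\{-1,0,1\}^{rk}$ by $\boldsymbol{\eps}_{(h,w_h)}=\eps_h$ for $h\in H$, and $0$ elsewhere. Then a pair with $\text{sign}(\mathbf{x}-\mathbf{y})=\boldsymbol{\eps}$ is exactly a pair as above. The pattern has $p\le k$ nonzero entries. So by \Cref{lemma:multidim} with $N=s$, $t=rk$ (after shifting to $[s]$), every set of size more than $ks^{rk-1}$ contains such a pair.

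Since $I$ is independent, $I_{\mathbf{a}}\cap I_{\mathbf{b}}$ contains no such pair, so $|I_{\mathbf{a}}\cap I_{\mathbf{b}}|\le ks^{rk-1}$. Then
$$|I|\le s^{rk}+\binom{|R|}{2}ks^{rk-1}\le s^{rk}+k|R|^2s^{rk-1}.$$
The only delicate point is getting the edge direction in each coordinate right when $\mathbf{a}_h>\mathbf{b}_h$. That is why the sign $\eps_h$ is chosen per coordinate. \Cref{lemma:multidim} handles mixed signs directly.
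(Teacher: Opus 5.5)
Your proof is correct and matches the paper's argument: you use the same decomposition $|I|\le|\bigcup_{\mathbf{a}} I_{\mathbf{a}}|+\sum|I_{\mathbf{a}}\cap I_{\mathbf{b}}|$, and the same sign pattern, nonzero exactly at coordinate $(h,\max(\mathbf{a}_h,\mathbf{b}_h))$ for $h$ with $\mathbf{a}_h\neq\mathbf{b}_h$, with sign $-1$ or $+1$ according to whether $\mathbf{a}_h<\mathbf{b}_h$, fed into \Cref{lemma:multidim}. Summing over unordered pairs even gives you the slightly sharper bound $s^{rk}+\binom{|R|}{2}ks^{rk-1}$.
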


\begin{proof}
     Let $I\subset Q^k\times R$ be an independent set. For $\mathbf{a}\in R$, let $$I_{\mathbf{a}}=\{\mathbf{x}\in Q^k: (\mathbf{x},\mathbf{a})\in I\}.$$
    \begin{claim}
        For distinct $\mathbf{a},\mathbf{b}\in R$,
    $$|I_{\mathbf{a}}\cap I_{\mathbf{b}}|\leq ks^{rk-1}.$$
    \end{claim}
    \begin{proof}
     Let $S=I_{\mathbf{a}}\cap I_{\mathbf{b}}\subset Q^k$, and identify $Q^k$ with $\{0,\dots,s-1\}^{rk}$. Assume to the contrary that $|S|>ks^{rk-1}$. Define the sign-pattern $\eps\in \{-1,0,1\}^{rk}$ as follows. For $g\in [rk]$, writing $g=(h-1)r+j$ with $h\in \{1,\dots,k\}$ and $j\in [1,\dots,r]$, let

    $$\eps_g=\begin{cases} -1 &\text{if }\mathbf{a}_h<\mathbf{b}_h\text{ and }j=\mathbf{b}_h,\\
    1 &\text{if }\mathbf{a}_h>\mathbf{b}_h\text{ and }j=\mathbf{a}_h,\\
    0 &\mbox{ otherwise.}\end{cases}$$
    As $\eps$ has at most $k$ non-zero entries and $|S|>ks^{rk-1}$, \Cref{lemma:multidim} ensures that there exist $\mathbf{x},\mathbf{y}\in S$ such that $\text{sign}(\mathbf{x}-\mathbf{y})=\eps$. But the sign-pattern $\eps$ is designed so that this implies $(\mathbf{x},\mathbf{a})\sim_{\mathbf{G}} (\mathbf{y},\mathbf{b})$, contradiction.
    \end{proof}
    
    From the previous claim, we have
    $$|I|=\sum_{\mathbf{a}\in R} |I_{\mathbf{a}}|\leq \left| \bigcup_{\mathbf{a}\in R} I_{\mathbf{a}}\right|+\sum_{\mathbf{a}\neq \mathbf{b}\in R} |I_{\mathbf{a}}\cap I_{\mathbf{b}}|\leq s^{rk}+|R|^2ks^{rk-1}.$$
\end{proof}

To finish the proof of \Cref{thm:box_dimd}, we need a large three-separating set $R\subset [r]^k$, so then we can set $f=|R|$. Such families are special subcases of separating hash families, and are extensively studied in combinatorics and theoretical computer science. As proved by Alon \cite{Nilli}, a probabilistic deletion argument provides a three-separating set of size $\Omega_k(r^{k/2})$ in case $k$ is even. On the other hand, if $k$ is odd, Shangguan and Ge \cite[Corollary 7.2]{ShGe} constructed three-separating sets of size $r^{\lceil k/2\rceil-o(1)}$, where  $o(1)\rightarrow 0$ as $r\rightarrow\infty$.

\section{Unit distance graphs}\label{sect:unitdist}

In this section, we prove \Cref{thm:main_unitdist} and \Cref{thm:main_cart_prod}. First, we recall the definition of the  \emph{$k$-th Cartesian power} of a graph $H$. Denoted by  $H^{\square k}$, it is the graph on vertex set $V(H)^k$, where $x,y$ are joined by an edge if $x$ and $y$ differ in a single coordinate $i\in [k]$, and $x_iy_i\in E(H)$. The advantage of Cartesian power is that if $H$ is an induced unit distance graph in $\mathbb{R}^d$, then $H^{\square k}$ is also an induced  unit distance graph in $\mathbb{R}^d$. This was proved in a more general form in \cite[Theorem 3.4]{unit_distance}, but for completeness, we also present a sketch of the proof.

\begin{lemma}\label{lemma:cartesian}
Let $H$ be an induced unit distance graph in $\mathbb{R}^d$ for $d\geq 2$. Then $H^{\square k}$ is also an induced unit distance graph in $\mathbb{R}^d$.
\end{lemma}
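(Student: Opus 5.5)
We use the standard rotation trick: realize $H^{\square k}$ as a Minkowski sum of $k$ rotated copies of the point set realizing $H$. Let $p:V(H)\to\mathbb{R}^d$ be injective with $\|p(u)-p(v)\|=1$ if and only if $uv\in E(H)$. Choose orthogonal maps $\rho_1,\dots,\rho_k\in SO(d)$ and define
$$P(x)=\sum_{i=1}^k \rho_i\bigl(p(x_i)\bigr),\qquad x\in V(H)^k.$$
If $x,y$ differ only in coordinate $i$, then $P(x)-P(y)=\rho_i(p(x_i)-p(y_i))$. Its norm is $\|p(x_i)-p(y_i)\|$, which is $1$ exactly when $x_iy_i\in E(H)$. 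So edges of $H^{\square k}$ go to unit distances, and pairs differing in one coordinate that are non-edges do not.

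It remains to pick the $\rho_i$ so that $P$ is injective and no pair $x,y$ differing in at least two coordinates is at distance $1$. For such a pair, let $D=\{i:x_i\neq y_i\}$, with $|D|\geq 2$, and set $w_i=p(x_i)-p(y_i)\neq 0$. We need
$$F(\rho)=\Bigl\|\sum_{i\in D}\rho_i w_i\Bigr\|^2\notin\{0,1\}.$$
$F$ is a polynomial function on the irreducible real algebraic variety $SO(d)^k$, so each condition $F=0$ and $F=1$ either holds identically or fails on a dense open set.

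The main step is to show neither identity holds. Fix two indices $a\neq b$ in $D$ and all other $\rho_i$. Because $d\geq 2$, $SO(d)$ acts transitively on the sphere of radius $\|w_b\|$. Rotating $\rho_b$ therefore moves $\rho_bw_b$ over the whole sphere, while $v=\sum_{i\neq b}\rho_iw_i$ stays fixed. Then $\|v+\rho_bw_b\|$ ranges over the full interval $\bigl[\,|\|v\|-\|w_b\||,\ \|v\|+\|w_b\|\,\bigr]$, which is nondegenerate since $w_b\neq 0$. Hence $F$ is nonconstant, and in particular is not identically $0$ or $1$.

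There are only finitely many pairs $(x,y)$, so the finite union of these proper algebraic subsets of $SO(d)^k$ is nowhere dense. A generic choice of $(\rho_1,\dots,\rho_k)$ avoids all of them, giving an injective $P$ whose induced unit distance graph is exactly $H^{\square k}$. The only delicate point is the nonconstancy argument, and it is exactly where $d\geq 2$ is needed: in dimension $1$ the group $SO(1)$ is trivial, so there is no rotation to vary and the argument fails.
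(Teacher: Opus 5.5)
Your construction is the paper's: it takes $\phi(x)=\sum_i R_ix_i$ with independent Haar-random $R_i\in \text{SO}(d)$. It then asserts, omitting details, that for pairs differing in at least two coordinates the events $\|\phi(x)-\phi(y)\|=1$ and $\phi(x)=\phi(y)$ have probability zero. Your generic-choice version supplies those details, and the nonconstancy of $F$ is exactly what either formulation needs.

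\textbf{A hole in the main step.} The values of $\|v+\rho_bw_b\|$ form an interval of length $2\min(\|v\|,\|w_b\|)$. So the interval is nondegenerate only if $v\neq 0$ as well; $w_b\neq 0$ alone is not enough. You freeze all $\rho_i$ with $i\neq b$ arbitrarily, and when $|D|\geq 3$ this can give $v=0$. For example, if $D=\{a,b,c\}$ and $\|w_a\|=\|w_c\|$ (as happens whenever $x_ay_a$ and $x_cy_c$ are both edges of $H$), freeze $\rho_cw_c=-\rho_aw_a$; then $F\equiv\|w_b\|^2$ as $\rho_b$ varies.

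You introduced the second index $a$ but never used it. Fix $\rho_i$ for $i\in D\setminus\{a,b\}$ arbitrarily, then choose $\rho_a$ with $\rho_aw_a\neq -\sum_{i\in D\setminus\{a,b\}}\rho_iw_i$. This is possible because $\rho_aw_a$ sweeps a sphere of positive radius. This is exactly where $|D|\geq 2$ enters, alongside $d\geq 2$.

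\textbf{A smaller point.} Zariski-irreducibility of a real algebraic set does not by itself imply that a polynomial not vanishing identically on it is nonzero on a dense open set. For instance, on the irreducible Whitney umbrella $x^2=zy^2$, the polynomial $y$ vanishes on the Euclidean-open piece $\{x=y=0,\ z<0\}$. What you need instead is that $\text{SO}(d)^k$ is a connected real-analytic manifold. By the identity theorem, a nonconstant polynomial function on it cannot be constant on a nonempty open subset, so each bad set is closed with empty interior.

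With these two repairs your argument is complete.
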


\begin{proof}
Let $R_1,\dots,R_k\in \text{SO}(d)$ be independent random rotation matrices. Formally, $\text{SO}(d)$ is the rotation group of $\mathbb{R}^d$, and we pick from the uniform measure, also known as Haar measure. Let $X\subset \mathbb{R}^d$ such that the unit distance graph of $X$ is isomorphic to $H$, and for $x\in X^k$, define
$$\phi(x)=\sum_{i=1}^k R_ix_i.$$
Note that if $x$ and $y$ only differ in coordinate $i$, then
$$\phi(x)-\phi(y)=R_i(x_i-y_i),$$
so $||\phi(x)-\phi(y)||=||x_i-y_i||$. On the other hand, if $x$ and $y$ differ in more than one coordinate, then $\mathbb{P}(||\phi(x)-\phi(y)||=1)=0$, and also $\mathbb{P}(\phi(x)=\phi(y))=0$ (ensuring injectivity). We omit further details.
\end{proof}

A well-known property of the Cartesian power is that $\chi(H^{\square k})=\chi(H)$ \cite{Sabidussi}.  We show that by randomly sampling a sufficiently large Cartesian power of $H$, we can retain the chromatic number while also destroying short cycles. We restate \Cref{thm:main_cart_prod} here for the reader's convenience.

\begin{theorem}\label{thm:product}
    Let $H$ be a graph and $g$ be an integer. Then there exists $k=k(H,g)$ and an induced subgraph of $H^{\square k}$ of girth at least $g$ and chromatic number $\chi(H)$.
\end{theorem}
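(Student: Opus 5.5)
Throughout, $r=\chi(H)$, $V=V(H)$, $N=|V|^k$, $\Delta=\Delta(H)$. The upper bound is free: every induced subgraph of $H^{\square k}$ has chromatic number at most $\chi(H^{\square k})=\chi(H)$ \cite{Sabidussi}. For the lower bound I will take a random subset $W\subset V^k$, keeping each vertex independently with probability $p$. I will then delete a small set $D$ so that $W\setminus D$ has no cycle of length less than $g$. The main work is to show that $H^{\square k}[W\setminus D]$ is still not $(r-1)$-colorable, using the graph container method from the preliminaries. Here $k$ is large and $p=C\log k/k$ for a suitable constant $C=C(H)$.

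\textbf{Supersaturation.} For every $z\in V^{k}$ with one coordinate $i$ left free, the fiber $\{x: x_\ell=z_\ell \text{ for } \ell\neq i\}$ induces a copy of $H$. There are $k|V|^{k-1}$ such fibers, and their edge sets partition $E(H^{\square k})$. Hence every $(r-1)$-coloring of all of $V^k$ has at least $k|V|^{k-1}=kN/|V|$ monochromatic edges, one per fiber.

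\textbf{Containers.} Run \Cref{alg:cont} on $H^{\square k}$ with $\Delta_0=k/(4|V|)$. Suppose $W\setminus D$ has a proper $(r-1)$-coloring with color classes $I_1,\dots,I_{r-1}$. Each $I_j$ lies in a container $C_j=S_j\cup T_{S_j}$ with $S_j\subset I_j\subset W$ and $\Delta(G[T_{S_j}])<\Delta_0$. Together with $|S_j|\le N/\Delta_0$, this bounds the number of edges inside $C_j$ by $N\Delta_0+|S_j|k\Delta$; I would double check this term, possibly absorbing it by shrinking $\Delta_0$ slightly. Let $U=\bigcup_j C_j$. Color $V^k$ by assigning each vertex of $U$ its container index and the rest arbitrarily. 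The monochromatic edges inside $U$ number at most about $(r-1)N\Delta_0\le kN/(4|V|)$. By supersaturation, at least $kN/(2|V|)$ monochromatic edges must then touch $V^k\setminus U$. Since each vertex has degree at most $k\Delta$, this gives $|V^k\setminus U|\ge cN$ with $c=1/(2|V|\Delta)$. On the other hand, $W\setminus D\subset U$, so $|W\setminus U|\le|D|$.

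The key estimate is a union bound over the at most $\binom{N}{\le N/\Delta_0}^{r-1}=\exp(O(rN\log k/k))$ choices of fingerprints $(S_1,\dots,S_{r-1})$. For a fixed choice, $U$ is determined and $S_j\subset W$, and $|W\cap(V^k\setminus U)|$ is binomial with mean at least $pcN$. By Chernoff, it is below $pcN/2$ with probability at most $\exp(-pcN/8)$. This beats the fingerprint count once $C$ is large, because $pcN/8=\Theta(CN\log k/k)$. Hence with high probability every admissible $U$ satisfies $|W\setminus U|\ge pcN/2$. It therefore suffices to have $|D|<pcN/2$.

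\textbf{Short cycles.} In a closed walk in $H^{\square k}$, every coordinate that changes must change at least twice. So a cycle of length $\ell$ uses at most $\ell/2$ coordinates, and the number of cycles of length $\ell$ through a vertex is at most $O_\ell(k^{\ell/2}\Delta^\ell)$. The expected number of vertices of $W$ lying on cycles of length $3\le\ell<g$ is therefore
\[
O_g\Bigl(N\sum_{\ell<g}p^\ell k^{\ell/2}\Delta^{\ell}\Bigr)=O_g\bigl(N(\log k)^{g}k^{-3/2}\bigr)\ll pN .
\]
By Markov's inequality, with probability at least $1/2$ the set $D$ of such vertices has size $o(pN)<pcN/2$. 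The resulting $W\setminus D$ induces a subgraph of girth at least $g$ and chromatic number exactly $r$.

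The main obstacle is the container bookkeeping. The edges inside $C_j$ contributed by $S_j$ must be controlled by the choice of $\Delta_0$. The thresholds $\Delta_0$ and $p$ must be balanced so that the entropy of the fingerprints, of order $N\log k/k$, is beaten by $pcN$. At the same time, the cycle count must stay below $pN$, which works because short cycles have density only $k^{-\ell/2}$.
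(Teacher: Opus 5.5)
Your proposal is correct and follows essentially the paper's proof, up to one constant slip. The shared ingredients are:
\begin{itemize}
\item a random $p$-sample of $H^{\square k}$;
\item containers with $\Delta_0=\Theta(k)$;
\item a union bound over $(r-1)$-tuples of containers showing that their union misses many sampled vertices (the paper's $q$-robustness);
\item deletion of short cycles using the $O(Nk^{\lfloor \ell/2\rfloor})$ cycle count.
\end{itemize}
The remaining differences are cosmetic. The paper takes $p=k^{-3/4}$, and it gets $|V^k\setminus U|\ge N/(2|V|)$ by counting slices lying entirely inside $U$, where you use a degree bound giving $N/(2|V|\Delta)$.

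The slip is in the choice of $\Delta_0$. With $\Delta_0=k/(4|V|)$, the edges inside $r-1$ containers can number up to $(r-1)N\Delta_0/2=(r-1)kN/(8|V|)$. This exceeds $kN/(4|V|)$ once $r\ge 4$, and for $r\ge 9$ it already exceeds the full supersaturation count $kN/|V|$. Take $\Delta_0=k/(4r|V|)$ instead; the paper uses $k/(r|V|)$. This only enlarges the fingerprint entropy by an $O_H(1)$ factor, so it just means choosing a larger $C$.

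The term $|S_j|k\Delta$ you were unsure about does not arise. Since $S_j\subset I_j$ is independent and sends no edges to $T_{S_j}$, you get $\Delta(G[C_j])<\Delta_0$ directly.
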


\begin{proof}
We may assume that $H$ has at least 2 vertices, otherwise the statement is trivial. Assume that $k$ is sufficiently large with respect to $|V(H)|$ and $g$, specified later. Let $m=|V(H)|$, $r=\chi(H)$, $G=H^{\square k}$, $N:=m^{k}=|V(G)|$ and $\Delta_0=\frac{k}{rm}$. Run the Container algorithm (\ref{alg:cont}) with parameter $\Delta_0$ on $H^{\square k}$, and let 
$$\mathcal{C}=\{S\cup T_S:S\text{ is a fingerprint of some independent set }I\}$$
be the collection of containers. Note that for every $C\in\mathcal{C}$, we have $\Delta(G[C])< \Delta_0$, so 
$$e(G[C])\leq \frac{kN}{2rm}.$$
Moreover, for every fingerprint $S$, we have $|S|\leq N/\Delta_0$, which implies 
$$|\mathcal{C}|\leq \binom{N}{\leq N/\Delta_0}\leq \exp\left(\frac{2Nrm\log k}{k}\right).$$
For $i\in [k]$ and $x\in V(H)^k=V(G)$, a subset of vertices in $G$ is called the \emph{$i$-slice of $x$} if it is the set of vertices that are either $x$, or only differ from $x$ in the $i$-th coordinate. A \emph{slice} refers to an $i$-slice for any $i\in [k]$. Each slice in $H^{\square k}$ induces a copy of $H$, and there are $km^{k-1}=kN/m$ slices. 

As the chromatic number of $H$ is at least $r$, no $r-1$ independent sets can cover $V(H)$. We show that this effect in $H^{\square k}$ is greatly amplified by proving that no $r-1$ containers can cover a large portion of $G$.

\begin{claim}
    Let $C_1,\dots,C_{r-1}\in \mathcal{C}$. Then
    $$|V(G)\setminus (C_1\cup \dots\cup C_{r-1})|\geq  \frac{N}{2m}.$$
\end{claim}
\begin{proof}
Let $D=C_1\cup\dots\cup C_{r-1}$. For $i\in [k]$, let $q_i$ be the number of $i$-slices contained completely in $D$. As $H$ is not $(r-1)$-colorable, every $i$-slice contained in $D$ sends at least one edge to one of $C_1,\dots,C_{r-1}$. Therefore,
$$q_1+\dots+q_k\leq \sum_{i=1}^{r-1} e(G[C_i])\leq (r-1)\cdot \frac{kN}{2rm}<\frac{kN}{2m}.$$
On the other hand, as the $i$-slices partition $V(G)$ into $N/m$ parts of size $m$, we have
$$|D|\leq q_i m+\left(\frac{N}{m}-q_i\right)(m-1)=q_i+\frac{N(m-1)}{m}.$$
Hence, we get
$$k|D|\leq q_1+\dots+q_k+\frac{kN(m-1)}{m}\leq \frac{kN}{2m}+\frac{kN(m-1)}{m}=\frac{kN(m-1/2)}{m},$$
which implies
$$|V(G)\setminus D|\geq \frac{N}{2m}.$$
\end{proof}

Let $p=k^{-3/4}$, and let $U$ be a random $p$-sample of $V(G)$. We prove that with high probability, $G[U]$ has chromatic number at least $r$ robustly, that is, the chromatic number remains at least $r$  even after removing some vertices.   We prove this by arguing that most of $U$ cannot be covered by $r-1$ containers. Say that $U$ is \emph{$q$-robust}, if $G[U']$ has chromatic number at least $r$ for every $U'\subset U$ with $|U'|\geq |U|-q$. 

\begin{claim}
    For $q=pN/(4m)$, we have
    $$\mathbb{P}(U\text{ is }q\text{-robust})\geq \frac{2}{3}.$$
\end{claim}

\begin{proof}
 Let $C_1,\dots,C_{r-1}\in \mathcal{C}$, $D=C_1\cup\dots \cup C_{r-1}$ and let $D^c=V(G)\setminus D$. Then $|D^c|\geq N/(2m)$ by the previous claim. Write $X=|U\cap D^c|$, then $\mathbb{E}X\geq pN/2m$, and by Chernoff's inequality
\begin{align*}
\mathbb{P}(X\leq q)&\leq \mathbb{P}\left(X\leq \frac{1}{2}\mathbb{E}X\right)\leq \exp\left(-\frac{\mathbb{E}X}{8}\right)< \exp\left(-\frac{pN}{16m}\right).
\end{align*}

If $U$ is not $q$-robust, then for some $U'\subset U$ with $|U'|\geq |U|-q$, $U'$ can be covered by $r-1$ independent sets, and thus $r-1$ containers. In other words, there is an $(r-1)$-tuple of containers $C_1,\dots,C_{r-1}$ such that $|U\setminus (C_1\cup\dots\cup C_{r-1})|\leq q$. Hence, by the union bound over all $(r-1)$-tuples of containers, we get
$$\mathbb{P}(U\text{ is not }q\text{-robust})\leq |\mathcal{C}|^{r-1}\exp\left(-\frac{pN}{16m}\right)<\exp\left(\frac{2Nr^2m\log k}{k}-\frac{k^{-3/4}N}{16m}\right).$$
If $k$ is sufficiently large, then the right-hand side is at most $1/3$, finishing the proof.
\end{proof}

In what follows, we show that random sampling destroys short cycles. 

\begin{claim}
    For $\ell\geq 3$, the number of cycles in $G$ of length $\ell$ is at most 
    $$O_{m,\ell}(Nk^{\lfloor \ell/2\rfloor}).$$
\end{claim}
\begin{proof}
A cycle $v_0,\dots,v_{\ell-1}$ in $G$ can be uniquely encoded by a sequence $v_0,s_1,e_1,s_2,e_2,\dots,s_{\ell},e_{\ell}$, where  $s_i$ is the coordinate in which $v_{i-1}$ and $v_i$ differ, and $e_i$ is the corresponding edge of $H$. Every coordinate that appears in $\{s_1,\dots,s_{\ell}\}$ must appear at least twice in order to ensure that $v_0,\dots,v_{\ell-1}$ is a cycle. Thus, we have at most $\binom{k}{\leq \lfloor\ell/2\rfloor}\leq 2k^{\lfloor\ell/2\rfloor}$ choices for the set $S=\{s_1,\dots,s_{\ell}\}$. From this, we get that the number of cycles of length $\ell$ is at most
$$2Nk^{\lfloor \ell/2\rfloor}\ell^{\ell}m^{2\ell},$$
as there are $N$ choices for $v_0$, at most $2k^{\lfloor \ell/2\rfloor}$ choices for the set $S$, at most $\ell^{\ell}$ choices to arrange $s_1,\dots,s_{\ell}$ once the set $S$ is fixed, and at most $m^{2\ell}$ choices for the edges $e_1,\dots,e_{\ell}$.
\end{proof}

Let $X_{\ell}$ denote the number of cycles of length $\ell$ in $G[U]$. Then 
$$\sum_{\ell=3}^{g-1} \mathbb{E}X_{\ell}\leq \sum_{\ell=3}^{g-1} O_{m,\ell}(p^{\ell}Nk^{\lfloor \ell/2\rfloor})=\sum_{\ell=3}^{g-1} O_{m,\ell}(Nk^{\lfloor \ell/2\rfloor-3\ell/4})\leq O_{m,g}(Nk^{-1}).$$
Let $q=pN/(4m)$. If $k$ is sufficiently large with respect to $m$ and $g$, the right-hand side is at most $q/3$. But then, by Markov's inequality, 
$$\mathbb{P}\left(X_3+\dots+X_{g-1}\geq q\right)\leq \frac{1}{3}.$$
Therefore, with positive probability,  there is a choice for $U$ such that $U$ is $q$-robust, and the number of cycles in $G[U]$ of length less than $g$ is at most $q$. But then removing a vertex of each cycle, we get a subset $V\subset U$ such that $\chi(G[V])\geq r$ and $G[V]$ has girth at least $g$. 
\end{proof}

The proof of \Cref{thm:main_unitdist} follows immediately from the combination of \Cref{thm:product} and \Cref{lemma:cartesian}. Indeed, let $H_0$ be a unit distance graph of chromatic number $r$. Clearly, then there also exists an induced unit distance graph $H$ of chromatic number $r$ as well, as adding edges does not decrease the chromatic number. Then $H^{\square k}$ is also an induced unit distance graph, and every induced subgraph of $H^{\square k}$ is also an induced unit distance graph. But for sufficiently large $k$,  $H^{\square k}$ has an induced subgraph with girth at least $g$ and chromatic number $\chi(H)$, finishing the proof.

\section{Integer distance graphs}\label{sect:integer}

In this section, we prove \Cref{thm:main_integer}. For a set $V\subset \mathbb{R}^2$, let $G_V$ denote the integer distance graph of $V$.  Let $m$ be a positive integer, and consider the integer grid $Q=[m]^2$. We prove that for a suitable choice of $p$, an alteration of the random $p$-sample of $Q$ provides a subset of points $V$ in strong general position such that $G_V$ has small independence number. Indeed, it has been known (see e.g. \cite{GGK}) that sampling the grid with probability $p\approx 1/(m(\log m)^{1/2})$ yields a set of points in strong general position after small alterations, so our new contribution is showing that the resulting set also induces an integer distance graph of small independence number. In order to bound the independence number of a random sample, we employ the Container method.

As a standard component of applications of the method, we first prove a supersaturation result, showing that any large subset of $Q$ contains a vertex with large degree.

\begin{lemma}\label{lemma:integer_sat}
Let $m$ be sufficiently large, and let $U\subset [m]^2$ be of size at least $m^{1.9}$. Then
$$\Delta(G_U)\geq \Omega\left(\frac{|U|\log m}{m}\right).$$
\end{lemma}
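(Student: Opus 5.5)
Since the maximum degree is at least the average degree, it suffices to show that $G_U$ has $\Omega(|U|^2\log m/m)$ edges. In other words, we need many pairs $u,v\in U$ whose difference vector $u-v=(a,b)$ satisfies $a^2+b^2=c^2$ for some integer $c$. So I would count Pythagorean difference vectors and then argue that a large set $U$ must realize many of them.

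\textbf{Step 1: many Pythagorean vectors.} Let $P\subset\{-m/10,\dots,m/10\}^2$ be the set of nonzero vectors $(a,b)$ with $a^2+b^2$ a perfect square. I expect $|P|=\Theta(m\log m)$. Indeed, the number of lattice points on a circle of integer radius $c$ equals $4$ times the number of representations of $c^2$, which is $4\prod_{p\equiv 1\ (4)}(2e_p+1)$ over the prime factorization of $c$. Summing this over $c\le m/10$ gives order $m\log m$, since the average of $\prod_{p\mid c,\,p\equiv 1\ (4)}(2e_p+1)$ is of order $\log m$. Alternatively, one can count primitive triples $(k(s^2-t^2),2kst)$ directly: there are $\Theta(m)$ primitive triples with hypotenuse at most $m$, and summing over multiples $k$ gives $\sum_c (\#\text{prim. with hyp}\le m/k)\approx \sum_k m/k\approx m\log m$.

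\textbf{Step 2: a lower bound on the edges.} For each $w\in P$, let $N_w=|\{u\in U: u+w\in U\}|$. Then $e(G_U)\ge \frac12\sum_{w\in P}N_w$, so it suffices to show $\sum_{w\in P}N_w\gtrsim |U|^2|P|/m^2$. This is the expected count for a random set of the same density. I would obtain it by an averaging or energy argument. Write $1_U$ and consider the convolution $f=1_U*1_{-U}$, so that $N_w=f(w)$. Partition $[m]^2$ into boxes of side $L=m/10$. The pairs of points lying in a common box number at least $|U|^2/100$ by Cauchy--Schwarz. For such pairs the difference ranges over $[-L,L]^2$, and we want $\sum_{w\in P}f(w)$ to be comparable to $|P|/L^2$ times $\sum_{w\in[-L,L]^2}f(w)$.

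This requires $P$ to be equidistributed against $f$, which is the main obstacle: an adversarial $U$ could avoid Pythagorean differences, in a Sárközy-type way. The fix I would try is to exploit the dilation structure of $P$. Every $w\in P$ has the form $k w_0$ with $w_0$ primitive, and $P$ is invariant under dilation by any integer. Taking a single primitive direction $w_0=(3,4)$, the multiples $kw_0$ with $k\le m/50$ all lie in $P$, and more generally so do all multiples of every primitive $w_0$. Then $\sum_{w\in P}N_w\ge \sum_{w_0}\sum_k N_{kw_0}$. For a fixed $w_0$ of length $\ell$, partition $[m]^2$ into arithmetic progressions with difference $w_0$, each of length about $m/\ell$; there are about $m^2/(m/\ell)=m\ell$ of them. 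Within a progression containing $t$ points of $U$, the number of pairs at differences that are multiples of $w_0$ is $\binom t2$. By convexity, the total is at least roughly $|U|^2/(m\ell)$, provided $|U|\gg m\ell$, which is where the hypothesis $|U|\ge m^{1.9}$ is used: restrict to $\ell\le m^{0.8}$. Summing over primitive $w_0$ with $\ell\le m^{0.8}$, of which there are $\Theta(\ell)$ at each length scale $\ell$, gives
$$\sum_{\ell\le m^{0.8}}\Theta(1)\cdot\frac{|U|^2}{m\ell}\cdot \ell\cdot\frac{1}{\ell}\approx\frac{|U|^2}{m}\sum_{\ell\le m^{0.8}}\frac1\ell=\Omega\!\left(\frac{|U|^2\log m}{m}\right).$$

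Each unordered pair of $U$ is counted at most once this way, because a difference vector determines its primitive direction up to sign. Hence $e(G_U)=\Omega(|U|^2\log m/m)$. Finally, $\Delta(G_U)\ge 2e(G_U)/|U|$ gives the claimed bound.

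The delicate points I would check carefully are the convexity step, namely that the progressions have length at most $m/\ell$ so their number is at least $m\ell/2$, and the density of primitive Pythagorean directions, about $c\,\ell$ of length in $[\ell,2\ell]$ per dyadic scale. In the last display, the dyadic sum therefore really reads $\sum_{\text{dyadic }\ell}\ell\cdot |U|^2/(m\ell)$, which gives $\Theta(\log m)$ scales, each contributing $|U|^2/m$.
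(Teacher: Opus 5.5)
Once you drop the unused Step 1 and the abandoned convolution idea, your argument is the paper's proof: for each primitive Pythagorean direction $w_0=(a,b)$, split $[m]^2$ into lattice lines of that direction, note each line meets $U$ in a clique, and apply Cauchy--Schwarz to get $\Omega(|U|^2/(m\ell))$ edges per direction; then sum over the $\Theta(\ell)$ directions per dyadic scale, which the paper obtains as $(v^2-u^2,2uv)$ with $u<v\le m^{1/4}$, giving $\Theta(\log m)$ scales each contributing $\Omega(|U|^2/m)$. One correction to the step you flagged: convexity needs an \emph{upper} bound on the number of lines, namely at most $m(a+b)=O(m\ell)$ for $a,b>0$ (each line has a unique first point in $([m]\times[b])\cup([a]\times[m])$), whereas ``length at most $m/\ell$'' only yields the useless lower bound $m\ell/2$.
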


\begin{proof}
If $(a,b)$ is an integer vector such that $a^2+b^2$ is a square (also known as \emph{Pythagorean vector}), then $[m]^2\cap \{x+t(a,b):t\in \mathbb{Z}\}$ forms a clique in $G_Q$ for every $x\in \mathbb{Z}^2$. If $1\leq a,b\leq 2m^{1/2}$ and $a,b$ are coprime, then the parallel lines $\{x+t(a,b):t\in \mathbb{Z}\}$ partition $Q$ into at most $m(a+b)$ parts. Indeed, each such line has an element in $([m]\times [b])\cup ([a]\times [m])$. Therefore, writing $L_1,\dots,L_k$ for the lines of direction $(a,b)$ intersecting $Q$, we have $k\leq m(a+b)$, and the number of edges of $G_U$ contained in one of $L_1,\dots,L_k$ is at least
$$\sum_{i=1}^k \binom{|L_i\cap U|}{2}=\frac{1}{2}\sum_{i=1}^k |L_i\cap U|^2-\frac{|U|}{2}\geq \frac{(\sum_{i=1}^k |L_i\cap U|)^2}{2k}-\frac{|U|}{2}=\frac{|U|^2}{2k}-\frac{|U|}{2}\geq \frac{|U|^2}{4m(a+b)}.$$
The first inequality is due to the Cauchy-Schwarz inequality, while in the last inequality, we used that $|U|\geq 16m^{3/2}>4m(a+b)$. Let $(u,v)$ be a pair of integers such that $1\leq u<v\leq m^{1/4}$, $u\not\equiv v\pmod{2}$ and $u,v$ are coprime. Then $(a,b)=(v^2-u^2,2uv)$ gives a unique Pythagorean vector with $a,b$ coprime and $1\leq a,b\leq 2m^{1/2}$. As lines of different directions yield different edges of $G_U$, we get
$$e(G_U)\geq \sum_{\substack{1\leq u<v\leq m^{1/4}\\ \text{gcd}(u,v)=1\\ u\not\equiv v\pmod{2}}}\frac{|U|^2}{16mv^2}.$$
By elementary number theoretic results, the sum on the right-hand side is at least $\Omega(|U|^2\log m/m)$. Indeed, for every sufficiently large $t$, there are $\Omega(t^2)$ coprime pairs $t\leq u<v\leq 2t$ with different parities, so the inequality follows by dyadic partitioning. Thus, by averaging, $G_U$ contains a vertex of degree at least $\Omega(|U|\log m/m)$.
\end{proof}

Now we are ready to employ the Container method and prove that a random $p$-sample of $Q$ has small independence number with large probability.

\begin{lemma}\label{lemma:int_ind}
Let $\eps\in (0,1/2)$, $p=\frac{\eps}{(\log m)^{1/2}m}$, and assume that $m$ is sufficiently large. Let $U$ be a random $p$-sample of $[m]^2$. Then with probability at least $2/3$, we have $|U|\geq pm^2/2$ and
$$\frac{\alpha(G_U)}{|U|}\leq O_{\eps}\left(\frac{\log\log m}{(\log m)^{1/2}}\right).$$
\end{lemma}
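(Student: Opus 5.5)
We run the Container algorithm (\ref{alg:cont}) on $G_Q$ with $Q=[m]^2$ and combine its output with the supersaturation bound of \Cref{lemma:integer_sat}. Set $\beta=C_\eps\log\log m/(\log m)^{1/2}$ for a large constant $C_\eps$, and take the target size $t=\beta pm^2$. The aim is containers of size at most $t/2$, with few enough containers that a union bound over $|U\cap C|\ge t$ succeeds.

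\textbf{Iterated containers.} A single run of the algorithm caps the maximum degree, but that only bounds $|C|$ through \Cref{lemma:integer_sat}. Given a current container $C$ with $|C|\ge m^{1.9}$, \Cref{lemma:integer_sat} gives $\Delta(G_C)\ge c|C|\log m/m$. We run the algorithm inside $G_C$ with $\Delta_0=c|C|\log m/(2m)$. The output container $C'$ has $\Delta(G_{C'})<\Delta_0$. Applying \Cref{lemma:integer_sat} to $C'$ then forces $|C'|<|C|/2$ (or $|C'|<m^{1.9}$). The fingerprint used has size $|S|\le |C|/\Delta_0=O(m/\log m)$, independent of $|C|$.

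\textbf{Counting.} Iterate until the container size drops below $t/2$, which takes about $\log_2(m^2/t)=O(\log m)$ rounds. This is at least $m^{1.9}$ since $t\approx m/\sqrt{\log m}\cdot\beta$. That bound fails, however: $t\ll m^{1.9}$, so supersaturation cannot be applied all the way down. I therefore switch the plan to a threshold of $m^{1.9}$ replaced by a direct argument. Either strengthen \Cref{lemma:integer_sat} to hold for $|U|\ge m^{1+\eta}$, or note that the proof only needs $|U|\ge 16m^{3/2}$, and choose Pythagorean directions with $a,b\le |U|/(16m)$. The latter gives a degree bound of $\Omega(|U|\log(|U|/m)/m)$, which suffices down to $|U|\approx m(\log m)^{O(1)}$.

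Each round costs at most $\binom{m^2}{\le O(m/\log m)}=\exp(O(m))$ choices for the fingerprint. This is done more carefully by weighting: the round at size $u$ uses fingerprints of size $O(m/\log(u/m))$. The total number of containers is $\exp\bigl(O(m)\sum_j 1/\log(u_j/m)\bigr)$. Over the dyadic sizes $u_j$ down to $t$ this sum is $O(\log\log m)$, giving $|\mathcal{C}|\le\exp(O(m\log\log m))$. Here $\log\log m$ arises.

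\textbf{Union bound.} Fix a container $C$ with $|C|\le t/2$. Then $\mathbb{E}|U\cap C|\le t/2$, and Chernoff gives $\mathbb{P}(|U\cap C|\ge t)\le \exp(-t/6)$. We have $t/6=C_\eps\eps\, m\log\log m/6$, which beats $|\mathcal{C}|\le \exp(O(m\log\log m))$ once $C_\eps$ is large. Every independent set of $G_U$ is independent in $G_Q$ and so lies in some container. Hence with probability $1-o(1)$ we get $\alpha(G_U)<t$. Also $|U|\ge pm^2/2$ with probability $1-o(1)$ by Chernoff, so $\alpha(G_U)/|U|\le 2\beta$.

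\textbf{Main obstacle.} The main obstacle is the bookkeeping of the iteration: extending supersaturation below $m^{1.9}$ and summing the fingerprint costs so that only a $\log\log m$ factor is lost.
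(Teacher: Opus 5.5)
There is a genuine gap in your final union bound. With your parameters, $t=\beta pm^2=C_\eps\eps\,m\log\log m/\log m$, not $C_\eps\eps\,m\log\log m$; you dropped a factor $\log m$.

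\textbf{Why the union bound fails.} Chernoff gives each container a failure probability of only $\exp(-\Theta(m\log\log m/\log m))$. Your family, however, has $\exp(O(m\log\log m))$ containers. Even the bound $\binom{m^2}{\le O(m/\log m)}$ for a single round is already $\exp(\Theta(m))$. Your estimate $\exp\bigl(O(m)\sum_j 1/\log(u_j/m)\bigr)$ also loses a $\log m$, since $\log\binom{m^2}{k}\approx k\log m$ for the $k$ in question. The root cause is the density. $U$ has only about $\eps m/\sqrt{\log m}$ points, so $t$ is far smaller than the $\Theta(m)$ bits needed to specify a fingerprint. A bound of the form ``number of containers times a per-container Chernoff tail'' cannot close here.

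\textbf{The container-size target is also wrong.} Containers of size at most $t/2<m$ cannot exist: the diagonal $\{(i,i):i\in[m]\}$ is an independent set of $G_Q$ of size $m$, since all its distances are $|i-j|\sqrt2$. What Chernoff needs is $p|C|\lesssim t$, so the detour about extending \Cref{lemma:integer_sat} below $m^{1.9}$ is beside the point.

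\textbf{The missing idea.} The fingerprint lies in the sample, $S\subset I\subset U$, so each fingerprint should be weighted by $\mathbb{P}(S\subset U)=p^{|S|}$. The paper runs the algorithm once on $G_Q$ with $\Delta_0=m$. Let $c_0$ be the constant of \Cref{lemma:integer_sat}. Then $|T_S|\le L=m^2/(c_0\log m)$, and by exactly your dyadic accounting $|S|\le\sigma:=\lfloor 8m\log\log m/(c_0\log m)\rfloor$. If $\alpha(G_U)\ge 4\sigma$, some fingerprint satisfies both $S\subset U$ and $|T_S\cap U|\ge 3\sigma$. These two events are independent because $S\cap T_S=\emptyset$, so
$$\mathbb{P}(\alpha(G_U)\ge 4\sigma)\le\sum_{k\le\sigma}\binom{m^2}{k}p^k\Bigl(\frac{eLp}{3\sigma}\Bigr)^{3\sigma}.$$
Since $m^2p/\sigma=\Theta_\eps(\sqrt{\log m}/\log\log m)$, the weighted fingerprint count is only $\exp(O(\sigma\log\log m))$. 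Since $Lp/\sigma=\Theta_\eps(1/(\sqrt{\log m}\,\log\log m))$, the binomial tail gains a larger compensating logarithm. Plain Chernoff $\exp(-\Theta(\sigma))$ would not suffice here; this is why $T_S$ is pushed down to size $m^2/\log m$. The product is $\exp\bigl(\sigma(1+\log\Theta(1/(\log m(\log\log m)^4)))\bigr)\to 0$. Together with $|U|\ge pm^2/2$, this gives $\alpha(G_U)/|U|\le 8\sigma/(pm^2)=O_\eps(\log\log m/\sqrt{\log m})$.
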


\begin{proof}
By the multiplicative Chernoff bound, we have $\mathbb{P}(|U|\geq pm^2/2)\geq 3/4$ for $m$ sufficiently large. 

Let $c_0$ be the constant hidden by the $\Omega(.)$ notation in \Cref{lemma:integer_sat}. Let
$$L=\frac{m^2}{c_0\log m}\quad\text{and}\quad\Delta_0=m.$$
Run the Container algorithm (\Cref{alg:cont}) on the graph $G_Q$ with parameter $\Delta_0$, and let $$\mathcal{C}=\{S\cup T_S:S\text{ is a fingerprint}\}$$ be the collection of containers.

Next, we bound the size of $S$ and $T_S$. The simple bound $|S|\leq |Q|/(\Delta_0+1)$ is not enough for our purposes, but a more delicate counting argument yields a stronger bound. 

\begin{claim}
    We have
    $$|S|\leq \frac{8m\log \log m}{c_0\log m}\quad\text{and}\quad |T_S|\leq L.$$
\end{claim}

\begin{proof}
 The Container algorithm starts with $T=V(G)$, and in each step, removes at least one vertex of $T$. In case $|T|\geq L$, we have $\Delta(G[T])\geq \frac{c_0L\log m}{m}=m=\Delta_0$ by \Cref{lemma:integer_sat}, which shows that  $|T_S|\leq L$. Moreover, \Cref{lemma:integer_sat} ensures that   $$\Delta(G[T])\geq \frac{c_0|T|\log m}{m}.$$ This means that every time the algorithm adds a vertex to the set $S$, it removes at least $\frac{c_0|T|\log m}{m}$ vertices from $T$. Hence, while the size of $T$ drops from $t$ to $t/2$ for some $t\geq 2L$, at most $\frac{2m}{c_0\log m}$ vertices have been added to $S$. This ensures that as the size of $T$ drops from $m^2$ to $L=m^2/(c_0\log m)$, at most $\frac{4m\log \log m}{c_0\log m}$  vertices have been added to $S$. On the other hand, if $|T|\leq L$, the algorithm added at most $L/\Delta_0=O(m/\log m)$ vertices to $S$. Thus, we have
 $$|S|\leq \frac{4m\log \log m}{c_0\log m}+O\left(\frac{m}{\log m}\right)\leq \frac{8m\log \log m}{c_0\log m}.$$
\end{proof}

Let $\beta=\lfloor \frac{8m\log\log m}{c_0\log m}\rfloor$ and $\alpha=4\beta$. If $\alpha(G_U)\geq \alpha$, then there exists an independent set $I\subset U$ of size at least $\alpha$, and  a fingerprint $S$ such that $S\subset I\subset T_S\cup S$. As $|S|\leq \beta$, the latter implies that 
$|T_S\cap U|\geq 3\beta$. The probability of this event can be bounded by a simple union bound as
$$\mathbb{P}(|T_S\cap U|\geq 3\beta)\leq \binom{|T_S|}{3\beta}p^{3\beta}\leq \left(\frac{
eLp}{3\beta}\right)^{3\beta}\leq \exp\left(3\beta \log \frac{Lp}{\beta} \right).$$
Hence, applying the union bound over all possible fingerprints, and noting that the events $(S\subset U)$ and $(|T_S\cap U|\geq 3\beta)$ are independent by the disjointness of $S$ and $T_S$, we get
\begin{align*}
\mathbb{P}(\alpha(G_U)\geq \alpha)&\leq \mathbb{P}(\exists S\text{ fingerprint}:\ S\subset U\text{ and }|T_S\cap U|\geq 3\beta)\\
&\leq \sum_{S\text{ fingerprint}} p^{|S|} \mathbb{P}(|T_S\cap U|\geq 3\beta)\\
&\leq \sum_{k=0}^{\beta}\binom{m^2}{k}p^k \exp\left(3\beta \log \frac{Lp}{\beta} \right)\\
&\leq \sum_{k=0}^{\beta}\exp\left(k+k\log \frac{m^2p}{k}+3\beta \log \frac{Lp}{\beta}\right).
\end{align*}
The function $x\mapsto x+x\log \frac{m^2p}{x}$ is monotone increasing for $x<\beta<m^2p/e$, so for $k\leq \beta$, the right-hand side can be further bounded as
\begin{equation}\label{eq:int1}
    (\beta+1) \exp\left(\beta+\beta\log \frac{m^2p}{\beta}+3\beta \log \frac{Lp}{\beta}\right)=(\beta+1)\exp\left(\beta+\beta\log \left[\frac{m^2p}{\beta}\cdot \left(\frac{Lp}{\beta}\right)^3\right]\right).
\end{equation}
Recalling that  $L=\Theta(m^2/\log m)$, $p=\eps/(m(\log m)^{1/2})$ and $\beta=\Theta(m\log\log m/\log m)$, we have inside the logarithm
$$\frac{m^2p}{\beta}\cdot \left(\frac{Lp}{\beta}\right)^3=\Theta_{\eps}\left(\frac{1}{(\log \log m)^4\log m}\right).$$  Therefore, the right-hand side of (\ref{eq:int1}) is tending to 0 as $m$ tends to infinity. Thus, with probability at least $2/3$, we have $\alpha(G_U)\leq \alpha$ and $|U|\geq pm^2/2$, which implies that 
$$\frac{\alpha(G_U)}{|U|}\leq O_{\eps}\left(\frac{\log \log m}{(\log m)^{1/2}}\right).$$
\end{proof}

In order to finish the proof of \Cref{thm:main_integer}, we prove that a random $p$-sample of $Q$ contains few triples of points on a line, and few 4-tuples on a circle, which can then be deleted by removing a few vertices. To this end, we use the following results about the number of collinear triples and concyclic 4-tuples in $[m]^2$. The former is a classical result of Guy and Kelly \cite{GK}, while the latter is due to a recent result of Ghosal, Goenka, and Keevash \cite{GGK}.

\begin{lemma}{\cite{GK}}
  The number of triples of points in $[m]^2$ contained on a line is $O(m^4\log m)$.
\end{lemma}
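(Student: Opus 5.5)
We need to bound the number of collinear triples in $[m]^2$ by $O(m^4\log m)$. The plan is to fix an ordered triple of distinct collinear points by choosing the outer two points $x,y$ and then counting the lattice points of $[m]^2$ strictly between them on the segment $xy$. If $y-x=(a,b)$ with $d=\gcd(a,b)$ (where $\gcd(a,0)=|a|$), then the segment $[x,y]$ contains exactly $d-1$ interior lattice points, and all of them lie in $[m]^2$ by convexity. Every collinear triple $\{u,v,w\}$ is counted exactly once this way, with the two extreme points as the pair and the middle point as the interior point. Hence the number of collinear triples is
$$\sum_{x\neq y\in [m]^2,\ x<y} \bigl(\gcd(y-x)-1\bigr)\leq \sum_{(a,b)\in[-m,m]^2\setminus\{0\}} m^2\gcd(a,b),$$
since for each difference vector $(a,b)$ there are at most $m^2$ choices of $x$.

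It then remains to show $\sum_{0\leq a,b\leq m,\,(a,b)\neq 0}\gcd(a,b)=O(m^2\log m)$; signs and the symmetric quadrants only contribute a constant factor. I would split the sum according to the value $d$ of the gcd. The number of pairs $(a,b)\in[0,m]^2$ with $d\mid a$ and $d\mid b$ is at most $(m/d+1)^2$. Therefore
$$\sum_{(a,b)}\gcd(a,b)\leq \sum_{d=1}^{m} d\cdot \#\{(a,b): d\mid a,\ d\mid b\}\leq \sum_{d=1}^m d\left(\frac{m}{d}+1\right)^2\leq \sum_{d=1}^m \left(\frac{4m^2}{d}+ 4d\right)=O(m^2\log m).$$
Here I used $(m/d+1)^2\leq 4m^2/d^2$ for $d\leq m$; the $+4d$ term is even unnecessary but harmless. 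This step overcounts, since it bounds $\gcd$ by the sum of all common divisors, but the bound is still valid.

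Multiplying by the factor $m^2$ gives $O(m^4\log m)$ collinear triples. The one mildly delicate point is the axis-parallel directions ($a=0$ or $b=0$), where $\gcd$ equals $|b|$ or $|a|$. These are covered by the same divisor count, and in any case they contribute only $O(m\cdot m^2)\cdot m=O(m^4)$ directly. I do not expect a real obstacle here. The only care needed is the exact correspondence between triples and their extreme pairs, and the harmonic-sum estimate that produces the $\log m$ factor.
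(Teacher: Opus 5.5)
Your proof is correct. The paper gives no argument for this lemma: it is cited from Guy and Kelly \cite{GK} and used only as an upper bound when estimating $\mathbb{E}X=O(p^3m^4\log m)$ for the random sample. So there is no internal proof to compare against, and yours makes the citation self-contained.

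Your route is the standard elementary one, and each step checks out:
\begin{itemize}
\item Charging each collinear triple to its two extreme points counts every triple exactly once.
\item The segment from $x$ to $x+(a,b)$ has exactly $\gcd(a,b)-1$ interior lattice points, and all of them lie in $[m]^2$ by convexity of the grid box.
\item For a nonzero $(a,b)\in[0,m]^2$ you have $\gcd(a,b)\le m$. So the bound $\gcd(a,b)\le\sum_{d\mid \gcd(a,b)} d$ only needs divisors $d\le m$.
\item Combined with $\#\{(a,b)\in[0,m]^2: d\mid a,\ d\mid b\}\le (m/d+1)^2\le 4m^2/d^2$, this gives $\sum_{(a,b)}\gcd(a,b)\le 4m^2\sum_{d\le m}1/d=O(m^2\log m)$.
\end{itemize}
The quadrant symmetry and the axis-parallel directions are handled correctly as well.

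The only thing your argument does not give is a matching lower bound. The count is in fact of order $m^4\log m$: restrict to $|a|,|b|\le m/2$, where each difference vector has at least $m^2/4$ placements. The paper never needs this lower bound.
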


\begin{lemma}{\cite[Theorem 1.3]{GGK}}
The number of 4-tuples of points in $[m]^2$ contained on a circle is $O(m^5)$.  
\end{lemma}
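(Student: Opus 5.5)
Since this lemma is quoted from \cite{GGK}, I would not try to reproduce their argument in full. Instead I would aim for the bound through a second-moment count of lattice points on circles. Write the number of concyclic $4$-tuples as $\sum_{C}\binom{|C\cap [m]^2|}{4}$, where $C$ ranges over circles containing at least four grid points. Ordered $4$-tuples are then counted by first choosing an ordered pair $(a,b)$ of distinct grid points, which can be done in at most $m^4$ ways. It remains to count pairs $(c,d)$ such that $a,b,c,d$ are concyclic. The circles through $a$ and $b$ form a pencil, parametrised by their centre on the perpendicular bisector of $ab$. For a fixed pair $(a,b)$, the number of such $(c,d)$ is $\sum_{C\ni a,b}|C\cap[m]^2|^2$. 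So the goal becomes showing that this quantity is $O(m)$ on average over the pairs $(a,b)$.

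\textbf{Step 1.} Parametrise the circles arithmetically. A circle through at least three lattice points has a rational centre $(u/q,v/q)$ with $\gcd(u,v,q)=1$. Its lattice points $x$ satisfy $|qx-(u,v)|^2=N$ for an integer $N\le 2q^2m^2$. Hence $|C\cap[m]^2|$ is at most the number of representations of $N$ as a sum of two squares lying in the residue class $(-u,-v)\bmod q$ and inside a box of side $qm$. \textbf{Step 2.} Translate by $a$ and write $w=b-a$. The centres on the bisector are $w/2+t\,w^{\perp}$ with $t$ rational, and the circle through $a$, $b$ and a third point $c$ has $t$ determined by a rational expression in $c$. I would therefore group the third points $c$ by the parameter $t$ they determine. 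Then $\sum_C |C\cap[m]^2|^2$ counts pairs $(c,d)$ with $t(c)=t(d)$, which is an equation of the shape
\[
\frac{|c-a|^2-(c-a)\cdot w}{(c-a)\cdot w^{\perp}}=\frac{|d-a|^2-(d-a)\cdot w}{(d-a)\cdot w^{\perp}}.
\]
\textbf{Step 3.} Sum this energy-type count over all $w$ as well. After clearing denominators, the count over $(a,w,c,d)$ becomes the number of integer solutions of a single quadratic Diophantine equation in boxed variables. I would bound it via the second moment $\sum_{N\le X} r_2(N)^2\asymp X\log X$ restricted to residue classes, using Gaussian integer factorisation to parametrise the lattice points on each circle.

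The main obstacle is Step 3. A pointwise divisor bound $r_2(N)=N^{o(1)}$ only yields $m^{5+o(1)}$. Even the sharp average $\sum r_2(N)^2\sim X\log X$ loses a factor $\log m$, which is exactly the loss in the trivial bound for collinear triples (compare \cite{GK}). To remove it, I would exploit the extra averaging over the centre denominator $q$. Circles with large $q$ carry few lattice points, because the congruence condition mod $q$ thins them out. Circles with small $q$ are few, and their contribution can be computed directly. I would first try a dyadic decomposition in $q$ and in the radius, balancing a congruence-restricted second-moment estimate against the number of circles in each dyadic range. I expect that to be the place where the argument of \cite{GGK} is genuinely needed.
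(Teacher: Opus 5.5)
\textbf{There is a genuine gap.} Your proposal is a plan whose only substantive step, Step 3, is not carried out and is explicitly deferred to \cite{GGK}. The paper does not prove this lemma either: it imports it as a black box from \cite[Theorem 1.3]{GGK}, so citing it is the intended route.

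\textbf{The gap is larger than you describe.} A pointwise bound $|C\cap[m]^2|\le m^{o(1)}$ does not give $m^{5+o(1)}$. It only says that each of the $\Theta(m^6)$ non-collinear triples $(a,b,c)$ has $m^{o(1)}$ completions $d$. That is $m^{6+o(1)}$ quadruples, or $m^{2+o(1)}$ for your per-pair quantity. To reach $O(m^5)$ you need that, on average over pairs $(a,b)$, only $O(m)$ of the roughly $m^2$ points $c$ lie on a circle through $a,b$ carrying a fourth grid point. That is a saving of a full power of $m$, not the removal of a $\log m$, and nothing in Steps 1--3 produces it.

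\textbf{Your heuristic for the saving fails on the dominant configurations.} Isosceles trapezoids symmetric about a horizontal line, $(x_1,y_1),(x_1,k-y_1),(x_2,y_2),(x_2,k-y_2)$, already give $\Theta(m^5)$ concyclic quadruples, so the lemma is sharp. Their circumcentres are $\bigl(\tfrac{x_1+x_2}{2}-\tfrac{(y_2-y_1)(y_1+y_2-k)}{2(x_1-x_2)},\tfrac{k}{2}\bigr)$. For a positive proportion of them the denominator $q$ is of order $m$. The fourth lattice point comes from reflection symmetry, not from a chance coincidence in a residue class mod $q$. So a dyadic count in $q$ driven by ``large $q$ thins out lattice points'' does not see these configurations. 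It would need a separate treatment of reflection-symmetric quadruples, with axes in every rational direction, or some other mechanism; your outline contains nothing of the sort.

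\textbf{The setup also contains errors.}
\begin{itemize}
\item Circumradii of nearly collinear triples are of order $m^3$, so $N\le 2q^2m^2$ is false.
\item After clearing denominators, the condition $t(c)=t(d)$ is the vanishing of the $3\times 3$ determinant with rows $(v_1,v_2,|v|^2)$ for $v\in\{w,c-a,d-a\}$. This is quartic, not a single quadratic equation.
\item The sum $\sum_{C\ni a,b}|C\cap[m]^2|^2$ is infinite unless you restrict to circles with at least four grid points and to $c,d\notin\{a,b\}$.
\end{itemize}
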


Let $p=\eps m^{-1}(\log m)^{-1/2}$ for some $\eps\in (0,1/2)$ specified later, and let $U$ be a random $p$-sample of $[m]^2$. Let $X$ denote the number of collinear triples of $U$, and let $Y$ denote the number of concyclic 4-tuples of $U$. Then
$$\mathbb{E}X=O(p^3m^4\log m)=O\left(\frac{\eps^3 m}{(\log m)^{1/2}}\right)$$
and
$$\mathbb{E}Y=O(p^4m^5)=O\left(\frac{\eps^4 m}{(\log m)^2}\right).$$
Thus, by choosing $\eps>0$ sufficiently small, we have $\mathbb{E}X+\mathbb{E}Y\leq pm^2/12$. But then by Markov's inequality, we have
$$\mathbb{P}(X+Y\leq pm^2/4)\geq 2/3.$$
Thus, combining this with \Cref{lemma:int_ind}, there is a choice for $U$ such that $|U|\geq pm^2/2$, $X+Y\leq pm^2/4$ and 
$$\frac{\alpha(G_U)}{|U|}\leq O\left(\frac{\log\log m}{(\log m)^{1/2}}\right).$$
Delete an element of every collinear triple and every concyclic 4-tuple of $U$, then we get a set $V$ of  $n\geq pm^2/4$ points in general position satisfying
$$\frac{\alpha(G_V)}{|V|}\leq \frac{2\alpha(G_U)}{|U|}=O\left(\frac{\log\log n}{(\log n)^{1/2}}\right).$$
This concludes the proof of \Cref{thm:main_integer}.

\section{Line-intersection graphs}\label{sect:line}

In this section, we prove a somewhat weaker version of \Cref{thm:main_line}. We prove the following theorem.

\begin{theorem}\label{thm:line_alter}
    For every positive integer $g$ and infinitely many $n$, there is an $n$-vertex intersection graph of $n$ lines in $\mathbb{R}^3$ with girth at least $g$, independence number $O_g\left(\frac{n\log \log n}{(\log n)^{1/2}}\right)$, and thus chromatic number $\Omega_g\left(\frac{(\log n)^{1/2}}{\log \log n}\right)$.
\end{theorem}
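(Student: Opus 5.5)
We follow the template of \Cref{sect:integer}: start from a highly structured family of lines whose intersection graph has many cliques, take a random $p$-sample, bound the independence number of the sample with the Container method, and then delete few vertices to kill short cycles. The \emph{first construction} mentioned in the introduction relies on additive combinatorics, so we use the grid-and-arithmetic-progression structure. Lines in $\mathbb{R}^3$ are parametrized by two points on the planes $z=0$ and $z=1$. Concretely, for $(a,b)\in [m]^2\times[m]^2$ take the line $\ell_{a,b}$ through $(a,0)$ and $(b,1)$. Two lines $\ell_{a,b},\ell_{c,d}$ meet (or are parallel) iff $a-c$ and $b-d$ are parallel vectors in $\mathbb{R}^2$. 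To get a purely additive, Pythagorean-free analogue of \Cref{lemma:integer_sat}, we restrict to lines $\ell_{a,b}$ with $a\in[m]^2$ and $b=\lambda a+u$ type families. Lines meeting at a common point form a clique, and these cliques play the role of the collinear integer-distance cliques. The cleanest choice is the following. Let $V=[m]^2\times[m]^2$. For each direction class, the lines through a fixed point $P$ at height $t\in\{1,\dots,T\}$ (rational heights with small denominators) correspond to the solutions of a linear equation $b=(1-1/t')a+\text{const}$. These solutions form a large clique in a ``grid''. This supplies $\approx \log m$ distinct clique-partitions of the vertex set, analogous to the $\log m$ Pythagorean directions.

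The key steps, in order, are as follows.
\begin{enumerate}
\item Prove a supersaturation lemma: every $U\subset V$ with $|U|\ge N^{1-c}$ has $\Delta(G_U)\ge \Omega(|U|\log m/m^{\gamma})$ for the appropriate normalization. This follows by Cauchy--Schwarz over each of the clique-partitions (concurrent pencils indexed by rational heights $p/q$ with $q\le$ some power of $m$), exactly as in \Cref{lemma:integer_sat}. The $\log m$ factor comes from summing $1/q^2$-type weights over coprime pairs via dyadic decomposition.
\item Run \Cref{alg:cont} with the fingerprint bound obtained by the halving argument, and union bound as in \Cref{lemma:int_ind}. Choosing $p$ so that the expected clique size per pencil is about $(\log m)^{-1/2}$ gives $\alpha(G_U)/|U|=O(\log\log m/(\log m)^{1/2})$ with probability $2/3$.
\item Count cycles of length $\ell<g$ in $G_V$. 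Bound the number of $\ell$-cycles by $O_\ell(N\cdot D^{\ell}\cdot m^{-\text{something}})$, where $D$ is the typical degree. Then $p^\ell$ times this count is $o(pN)$, and Markov plus deletion of one vertex per short cycle removes only a small fraction of $U$. Also discard pairs of parallel lines and triples of concurrent lines through one point, which yield triangles; these are counted by the same method.
\end{enumerate}

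The main obstacle is step 3, the short cycle count. In contrast to triangles or concyclic quadruples, a cycle of length $\ell$ of lines is a configuration of $\ell$ lines in which consecutive ones meet. Its count must beat $p^{-\ell}$ by a factor while $pD\approx(\log m)^{1/2}$. Here we would exploit that the sampled degree $pD$ is only polylogarithmic, so the expected number of $\ell$-cycles through a vertex is about $(pD)^{\ell}/(\text{number of degrees of freedom lost by closing})$. The claim we would need to show is that closing the cycle costs a factor $m^{-c}$, via a Szemer\'edi--Trotter or incidence-type bound in the additive grid. We would first try to prove it by fixing all but one line and showing the closing constraint pins the last line to $O(1)$ candidates up to degenerate (co-planar) cases. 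The degenerate cases are handled by a separate polynomially smaller count.
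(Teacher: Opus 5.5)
\textbf{Verdict: genuine gap.} Your Steps 1--2 are plausible. For $V=[m]^2\times[m]^2$, the pencils through points at a fixed rational height $r/s$ partition $V$ into cliques. Summing Cauchy--Schwarz over heights gives $\Delta(G_U)=\Omega(|U|\log m/m^2)$. The integer-distance container computation then transfers with $N=m^4$, $\Delta_0=m^2$ and $p\approx\eps m^{-2}(\log m)^{-1/2}$.

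The gap is Step 3. Girth is the whole point of this theorem; the integer-distance template you follow never has to control it. You only state what would need to be shown, and the mechanism you suggest fails for this family.
\begin{itemize}
\item \emph{No $O(1)$ pinning.} Fixing all lines but one does not leave $O(1)$ candidates for the last one. The common transversals of two skew lines form a two-parameter family. Concretely, after translating into the grid, the $z$-axis and the line through $(0,1,0)$ and $(1,0,1)$ are skew, yet every line through $(\lambda,0,0)$ and $(1,0,1)$ with $\lambda\neq 1$ meets both, giving $\Theta(m)$ common neighbours. Two lines meeting at a central half-integer point at height $1/2$ have $\Theta(m^2)$ common neighbours.
\item \emph{Degenerate cycles are not polynomially rarer.} $\Theta(m^2)$ of the pencils through half-integer points at height $1/2$ are cliques of size $\Theta(m^2)$. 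So $G_V$ has $\Theta(m^{2k+2})$ $k$-cycles lying inside single pencils. That is only a factor $(\log m)^{k-1}$ below the number $N\Delta^{k-1}\approx m^{2k+2}(\log m)^{k-1}$ of $k$-paths, where $\Delta\approx m^2\log m$.
\item \emph{The margin is only polylogarithmic.} In the sample these pencil cycles contribute $\Theta(p^km^{2k+2})=\Theta\bigl(pN\,(pm^2)^{k-1}\bigr)$ cycles. This is negligible against $|U|$ only because $pm^2\approx(\log m)^{-1/2}\to 0$.
\item \emph{What a proof would need.} The pencil and plane cycles must be counted essentially exactly. Any bound losing $m^{o(1)}$ (divisor or incidence bounds) can only be applied to configurations already shown to be polynomially rare. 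All mixed configurations (pencils glued to planes, transversal families, reguli) would have to be classified.
\end{itemize}
None of this is done, so girth at least $g$ is not established. Two minor points: parallel pairs are non-edges and need not be discarded, while coplanar non-concurrent triples are triangles that you did not list.

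\textbf{How the paper avoids this.} It uses a different family of lines, and its additive-combinatorics input is not a clique-partition structure. It takes $\ell_{a,b}=\{(x,ax+a^2,3a^2x+4a^3-b)\}$, for which two lines meet iff $(A-a)^3=B-b$.
\begin{itemize}
\item $G_Q$ is triangle-free for every $Q$, since $\sum d_i=\sum d_i^3=0$ forces $d_1d_2d_3=0$, and it has no large cliques.
\item The number of $k$-cycles in $[m]\times[m^3]$ is $m^{k+2+o(1)}$. This follows from the identity $(d_3+\dots+d_k)^3-(d_3^3+\dots+d_k^3)=3d_1d_2(d_3+\dots+d_k)$ and the divisor bound.
\item With $p=m^{-1+1/g}$ there is therefore polynomial room: fewer than $m^3$ expected short cycles against $|U|\approx m^{3+1/g}$.
\item The price is that small independence number cannot come from clique covers. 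It comes instead from the Lyall--Magyar bound for sets avoiding differences $(d,d^2,d^3)$. This is turned into supersaturation by the edge-preserving maps $(x,y)\mapsto(t+qx,u+q^3y)$.
\end{itemize}
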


We define a parametric collection of lines. For $(a,b)\in \mathbb{R}^2$, define the line $\ell_{a,b}$ in $\mathbb{R}^3$ such that
$$\ell_{a,b}=\{(x,ax+a^2,3a^2x+4a^3-b):x\in \mathbb{R}\}.$$
The main advantage of this family is the following intersection property.

\begin{lemma}
Let $(a,b),(A,B)\in \mathbb{R}^2$ such that $(a,b)\neq (A,B)$. Then $\ell_{a,b}$ and $\ell_{A,B}$ have a nonempty intersection if and only if 
$$(A-a)^3=(B-b).$$
\end{lemma}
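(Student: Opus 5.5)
We will parametrize each line by its first coordinate $x$ and reduce the intersection question to solving a small polynomial system. A point of $\ell_{a,b}$ has the form $(x,ax+a^2,3a^2x+4a^3-b)$, and a point of $\ell_{A,B}$ has the form $(x',Ax'+A^2,3A^2x'+4A^3-B)$. Equating the first coordinates forces $x=x'$. So the two lines meet if and only if some real $x$ satisfies both
$$ax+a^2=Ax+A^2\quad\text{and}\quad 3a^2x+4a^3-b=3A^2x+4A^3-B.$$

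First we treat the case $a=A$. Then $b\neq B$, because $(a,b)\neq(A,B)$. The first equation holds for every $x$, while the second reduces to $b=B$, which is false. So the lines are parallel and disjoint. This matches the claimed criterion, since $(A-a)^3=0\neq B-b$.

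Now assume $a\neq A$. The first equation becomes $(a-A)x=(A-a)(A+a)$, which forces the unique value $x=-(a+A)$. Substituting this into the second equation gives the condition
$$B-b=4(A^3-a^3)-3(A^2-a^2)(a+A).$$
Factoring out $A-a$, the right-hand side equals
$$(A-a)\bigl[4(A^2+Aa+a^2)-3(A+a)^2\bigr]=(A-a)(A^2-2Aa+a^2)=(A-a)^3.$$
So the lines meet exactly when $B-b=(A-a)^3$, as claimed.

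The only step requiring any care is this algebraic simplification, and it is routine. It is also where the specific coefficients $3a^2$ and $4a^3$ were clearly chosen, so that the expression collapses to a perfect cube.
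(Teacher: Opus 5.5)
Your proof is correct and follows the paper's argument: you equate first coordinates, rule out $a=A$, solve the second coordinate for $x=-(a+A)$, and reduce the third coordinate to $(A-a)^3=B-b$. Your explicit factorization and your treatment of the $a=A$ case in both directions are slightly more detailed than the paper's terse ``simplifies to''.
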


\begin{proof}
Assume that 
$$(x,ax+a^2,3a^2x+4a^3-b)=(X,AX+A^2,3A^2X+4A^3-B)$$
for some real numbers $x,X\in \mathbb{R}$. Then $x=X$. If $a=A$, then we must have $b=B$ as well, thus $a\neq A$. But then the equality of the second coordinates $ax+a^2=Ax+A^2$ implies $x=-(a+A)$. Finally, the equality of the third coordinates is equivalent to
$$-3a^2(a+A)+4a^3-b=-3A^2(a+A)+4A^3-B,$$
which simplifies to
$$(A-a)^3=B-b.$$
\end{proof}

Given a set $Q\subset \mathbb{R}^2$, define the graph $G_Q$ on vertex set $Q$ such that $(a,b)\sim (A,B)$ if $(A-a)^3=B-b$. Then by the previous lemma, $G_Q$ is an intersection graph of lines in $\mathbb{R}^3$. Another corollary of this lemma is that $G_Q$ is triangle-free for every choice of $Q$.

\begin{lemma}
For every $Q\subset \mathbb{R}^2$, $G_Q$ is triangle-free.
\end{lemma}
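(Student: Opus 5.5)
Suppose for contradiction that $(a_1,b_1),(a_2,b_2),(a_3,b_3)\in Q$ are three distinct vertices that form a triangle in $G_Q$. The adjacency relation $(A-a)^3=B-b$ is symmetric, since swapping the two points negates both sides. It also depends only on the difference vector $(A-a,B-b)$. So the plan is to pass to differences and reach a contradiction by a short algebraic identity.

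First, the first coordinates must be pairwise distinct. If $a_i=a_j$ for some adjacent pair, then adjacency forces $b_j-b_i=0$, so the two vertices coincide, which is a contradiction. Now set $u=a_2-a_1$ and $v=a_3-a_2$, so that $a_3-a_1=u+v$, and $u$, $v$, $u+v$ are all nonzero. The three edge relations give
$$b_2-b_1=u^3,\qquad b_3-b_2=v^3,\qquad b_3-b_1=(u+v)^3.$$

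Adding the first two equations and comparing with the third yields $u^3+v^3=(u+v)^3$. Expanding, this becomes $3uv(u+v)=0$. This contradicts $u,v,u+v\neq 0$, so no triangle exists.

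The only point needing care is the degenerate case where two first coordinates coincide, and that is handled by the first step. The rest is the cube identity, which is the reason the parametrization uses the cubic $(A-a)^3$: an additive relation along the cycle becomes impossible for cubes. I expect no real obstacle here.
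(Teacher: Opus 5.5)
Your proof is correct and is essentially the paper's argument: the paper sets $d_1,d_2,d_3$ to be the cyclic differences of the first coordinates, so that $d_1+d_2+d_3=0$ and $d_1^3+d_2^3+d_3^3=0$. This forces $d_1d_2d_3=0$, which is exactly your identity $u^3+v^3=(u+v)^3\Rightarrow 3uv(u+v)=0$ with $d_3=-(u+v)$, and your explicit treatment of equal first coordinates simply spells out the paper's closing remark that a vanishing difference makes two vertices coincide.
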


\begin{proof}
Assume to the contrary that $(a_1,b_1),(a_2,b_2),(a_3,b_3)\in Q$ form a triangle, and write $d_1=a_2-a_1,d_2=a_3-a_2,d_3=a_1-a_3$.  Then
$$d_1+d_2+d_3=0\quad\text{and}\quad 0=(b_2-b_1)+(b_3-b_2)+(b_1-b_3)=d_1^3+d_2^3+d_3^3.$$
But this implies $d_1d_2d_3=0$, which is only possible if two of the vertices coincide. 
\end{proof}

We show that if $Q$ is an integer grid, then $G_Q$ has no large independent sets by using the following result of Lyall and Magyar \cite{LM}.

\begin{lemma}{\cite[Theorem 3]{LM}}
Let $X\subset [M]^k$ such that $(d,d^2,\dots,d^k)\not\in X-X$ for every $d\neq 0$, then
$$\frac{|X|}{M^k}\leq O_k\left(\frac{(\log \log M)^2}{\log M}\right)^{1/(k-1)}.$$
\end{lemma}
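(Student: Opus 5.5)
This statement is quoted from Lyall and Magyar \cite[Theorem 3]{LM}, and the paper uses it as a black box. If I had to reprove it, I would run a Fourier-analytic density increment argument of Sárközy type, adapted to the moment curve $\gamma(d)=(d,d^2,\dots,d^k)$. The two inputs would be Weyl sum estimates for $\gamma$ and an iteration on arithmetic sub-boxes that are invariant under the curve.

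\textbf{Counting and the major-arc dichotomy.} Write $\delta=|X|/M^k$. Let $D=cM^{1/k}$, so that $\gamma(d)\in[-M,M]^k$ for $|d|\le D$. Consider the count
$$\Lambda(X)=\sum_{x}\sum_{1\le d\le D}1_X(x)1_X(x+\gamma(d))=\int_{\mathbb{T}^k}|\widehat{1_X}(\alpha)|^2\,\overline{S(\alpha)}\,d\alpha,\qquad S(\alpha)=\sum_{d\le D}e(\alpha\cdot\gamma(d)).$$
The hypothesis forces $\Lambda(X)=0$. On the other hand, the contribution of $\alpha$ near $0$ is of order $\delta^2M^kD$, after first restricting to a slightly smaller box to control boundary effects.

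I would then split $\mathbb{T}^k$ into major arcs and minor arcs. The major arcs are the points $\alpha$ with $|\alpha_i-a_i/q|\le Q/(qD^i)$ for some $q\le Q$. On the minor arcs, Weyl's inequality (or Vinogradov's mean value theorem) gives $|S(\alpha)|\le D\,Q^{-c_k}$. On the major arcs, $S(\alpha)$ is approximated by $q^{-1}G(a,q)\,I(\cdots)$, where $G$ is a complete exponential sum with the bound $|G(a,q)|\ll q^{1-1/k+\eps}$ and $I$ is an oscillatory integral. Parseval then shows the following. Unless $Q$ is at least a power of $\delta^{-1}$, the equation $\Lambda(X)=0$ forces a large $L^2$ Fourier mass $\gg\delta^2M^k$ of $1_X-\delta$ on the major arcs with $q$ small.

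\textbf{Density increment on a curve-invariant sub-box.} Large major-arc mass yields a sub-box on which $X$ has increased density. I would take the sub-box of the form $x_0+\prod_i q^{i}\cdot[M_i']$, with scaled side lengths $M_i'\approx M'^i$. The substitution $d=qd'$ sends $\gamma(d)$ to $(qd',q^2d'^2,\dots,q^kd'^k)$, so the rescaled set is again a set avoiding $\gamma(d')-$differences in a box of the same shape. This is why the scaling $q^i$ is used in coordinate $i$. The resulting increment is $\delta\mapsto\delta(1+c)$ or $\delta\mapsto\delta+c\delta^{k}$, depending on how finely one pigeonholes over $q$ and over the dyadic scale of the Fourier mass. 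The side length drops from $M$ to roughly $M^{c}/\mathrm{poly}(Q)$, or to $M/\exp(\mathrm{poly}\log(1/\delta))$ if one is careful.

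\textbf{Main obstacle: the quantitative iteration.} Iterating until the density would exceed $1$ and solving for $\delta$ gives the bound. The hard part is obtaining $((\log\log M)^2/\log M)^{1/(k-1)}$ rather than a much weaker iterated-logarithm bound. The box must shrink by only a factor like $\exp(\mathrm{poly}\log(1/\delta))$ per step, and not to a power of $M$. For this I would follow the Pintz--Steiger--Szemérdi / Balog--Pelikán--Pintz--Szemerédi refinement of Sárközy's theorem. That refinement simultaneously uses many moduli $q$: it passes to a common modulus and sums the major-arc masses over all $q\le Q$ with the Gauss-sum decay $q^{-1/k}$, which is what produces the exponent $1/(k-1)$. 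It also performs the increment on a box with modulus $\mathrm{lcm}$-type structure. Keeping the Weyl-sum savings uniform in the varying anisotropic box shapes across iterations is the step I expect to be the genuinely delicate part.
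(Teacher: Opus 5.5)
You are right that the paper gives no proof of this lemma. It imports the bound verbatim from Lyall and Magyar and uses it only as a black box (with $k=3$, $M=N^3$ in the line-intersection section), so on the statement itself you agree with the paper.

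The reproof you sketch would not go through as planned, because it misplaces the difficulty.

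\emph{The per-step loss is an artifact of the cube.} The power-of-$M$ loss per step, which you want to avoid with the Pintz--Steiger--Szemer\'edi / Balog--Pelik\'an--Pintz--Szemer\'edi machinery, comes from counting in the isotropic cube. Since $d\le D\approx M^{1/k}$, the major arcs have width about $Q/(qM^{i/k})$ in coordinate $i$. So you should pass once, by pigeonhole, to a translate of $[N]\times[N^2]\times\dots\times[N^k]$ with $N\approx M^{1/k}$ and density at least $\delta$, and keep boxes of this shape throughout. There a \emph{single} modulus $q\le Q=\delta^{-O(1)}$ already gives the increment on $x_0+\prod_i q^i\cdot[N'^i]$ with $N'\ge\delta^{O(1)}N$. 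No $\exp(\mathrm{poly}\log(1/\delta))$ bookkeeping is needed.

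\emph{The multi-modulus route is counterproductive.} One common modulus for all $q\le Q$ costs a factor $\mathrm{lcm}(1,\dots,Q)=e^{(1+o(1))Q}$ in the scale at every step, which is far too much for the stated exponent. In the one-dimensional problem, that machinery is what one uses to go \emph{beyond} bounds of this shape, not to reach them.

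\emph{The minor arcs need more than plain Weyl.} You need a bound polynomial in $Q$ with no $D^{\varepsilon}$ loss, since $Q$ is only polylogarithmic in $M$. Plain Weyl does not give this; you must also use the major-arc approximation on the wide arcs.

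\emph{The missing step is the pigeonhole over $q$ that actually produces $1/(k-1)$.}
\begin{enumerate}
\item Write $f=1_X-\delta$ on the box $B$, and let $E_q$ be the normalized mass $|B|^{-1}\sum_a\int|\widehat f|^2$ over the major arcs with exact denominator $q$.
\item The vanishing of $\Lambda(X)$, the minor-arc bound, and the complete-sum bound $\bigl|q^{-1}\sum_{d\bmod q}e(a\cdot\gamma(d)/q)\bigr|\ll_k q^{-1/k}$ for $\gcd(a_1,\dots,a_k,q)=1$ give $\sum_{q\le Q}q^{-1/k}E_q\gg\delta^2$. Parseval gives $\sum_qE_q\le\delta$.
\item Let $\eta=\max_qE_q$. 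Splitting the first sum at $q=\delta/\eta$ yields $\delta^2\ll\eta^{1/k}\delta^{1-1/k}$, i.e.\ $\eta\gg\delta^{k+1}$.
\item The usual $L^2$-to-$L^\infty$ step then gives density $\delta+c\delta^k$ on a curve-invariant sub-box: $f*\mu_P\ge-\delta$ has mean about $0$ and $\|f*\mu_P\|_2^2\gg\eta|B|$.
\item About $\delta^{-(k-1)}$ iterations, each costing a factor $\delta^{O(1)}$ in $N$, force $\delta^{-(k-1)}\log(1/\delta)\ll\log M$. This is the claimed bound up to the $\log\log$ factors.
\end{enumerate}

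So you are right that the weight $q^{-1/k}$ is the source of the exponent, but it enters by selecting one good modulus, not by combining moduli. Of your two candidate increments, only $\delta\mapsto\delta+c\delta^k$ is what this argument delivers. An increment $\delta\mapsto(1+c)\delta$ with the losses you describe would give a bound of shape $\exp(-(\log M)^{c})$, far stronger than the statement.
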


\begin{lemma}
Let $Q=[N]\times [N^3]$. Then
$$\frac{\alpha(G_Q)}{N^4}\leq O\left(\frac{\log \log N}{(\log N)^{1/2}}\right).$$
\end{lemma}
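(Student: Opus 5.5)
We will deduce the bound from the Lyall--Magyar theorem with $k=3$, which gives exponent $1/(k-1)=1/2$. We need to turn an independent set of $G_Q$ into a subset of a cube $[M]^3$ avoiding differences $(d,d^2,d^3)$. The obstacle is that $G_Q$ only constrains the first and third coordinates of such a difference. The plan is to embed $Q=[N]\times[N^3]$ into a three-dimensional set by adding an auxiliary middle coordinate. We then average over translates, or fibres, so that a density bound for $X\subset[M]^3$ transfers to $I$.

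Concretely, let $I\subset Q$ be independent, and set $M=N^3$ (or a constant multiple). Define
$$X=\{(a,c,b): (a,b)\in I,\ c\in[M]\}\subset [N]\times[M]\times[N^3].$$
Suppose $X-X$ contains $(d,d^2,d^3)$ with $d\neq 0$. Taking the first and third coordinates, this means $(a+d,b+d^3)\in I$ for some $(a,b)\in I$, which is an edge of $G_Q$, a contradiction. The middle coordinate is free, so it imposes nothing; we only need $|d|<N$ to ensure $d^2<M$ fits.

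The mismatch in side lengths is handled by dilation or padding. The clean route is to note that $X$ is a subset of $[M]^3$ once we regard $[N]\subset[M]$. However, the density of $X$ in $[M]^3$ is then $|I|/(N\cdot N^3)\cdot (N/M)$, which is too small to be useful. So instead we tile the first coordinate. Partition $[M]$ into $M/N$ blocks of length $N$, and in block $t$ place a translate of $I$; the second coordinate carries the free $[M]$ factor. Now a difference $(d,d^2,d^3)$ with $|d|\ge N$ could join two different blocks, and this must be excluded.

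To exclude it, we change the tiling: take the product over a lattice with spacing larger than any admissible cross-block difference. Alternatively, and more simply, we use the third coordinate. For $|d|\ge N$ we have $|d|^3\ge N^3$, which exceeds the range of the third coordinate $[N^3]$. So if we put all blocks at the same third-coordinate range $[N^3]$, with first coordinate ranging over $[M]$, any difference with $|d|\geq N$ is impossible, because the third coordinates differ by less than $N^3$. Blocks in the first coordinate can therefore be arbitrary copies of $I$ without creating forbidden differences.

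This suggests the cleanest choice: $M=N^3$, and
$$X=\{(a+tN,c,b): (a,b)\in I,\ 0\le t<N^2,\ c\in[M]\}\subset[M]^3.$$
Then $|X|=|I|\cdot N^2\cdot M$, so $|X|/M^3=|I|/N^4$. A forbidden difference must have $|d|<N$ by the third coordinate. It also cannot change $t$ unless $a$ wraps across a block, so we must check the case where $d$ moves between adjacent blocks. This is the main delicate step. The tiling should guarantee that $(a+tN)+d=a'+t'N$ with $(a',b+d^3)\in I$ still forces an edge of $G_Q$. It does not in general, because $a'-a=d-(t'-t)N\neq d$. To fix this, we instead take $X$ to be the set of $(u,c,b)$ with $(u \bmod \text{block},b)$ in $I$, where $I$ is viewed after restricting to a translate-invariant density. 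The simplest resolution is averaging. Apply Lyall--Magyar not to $X$ built from $I$ directly, but argue via contrapositive with a supersaturation-free density increment: for a random shift, a positive proportion of the cross-block pairs are avoided. Failing that, separate the blocks by gaps of length $N$, which costs only a factor $2$ in density. With blocks of spacing $2N$, $|d|<N$ cannot cross between blocks, and a forbidden difference stays inside one copy of $I$, yielding an edge. Then Lyall--Magyar gives
$$\frac{|I|}{2N^4}\le \frac{|X|}{M^3}\le O\!\left(\frac{(\log\log M)^2}{\log M}\right)^{1/2}=O\!\left(\frac{\log\log N}{(\log N)^{1/2}}\right),$$
as claimed.
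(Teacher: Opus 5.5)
Your final argument is correct and is exactly the paper's proof: a free middle coordinate $c\in[N^3]$, translates of $I$ in first-coordinate blocks of spacing $2N$, the third coordinate forcing $|d|<N$ so no forbidden difference crosses blocks, then Lyall--Magyar with $k=3$, $M=N^3$ (the paper takes $t\in\{0,\dots,\lfloor N^2/2\rfloor-1\}$ and the looser constant $4$). The intermediate remarks about averaging, random shifts and density increments play no role and should be deleted.
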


\begin{proof}
Let $I\subset Q$ be such that $\{\ell_{a,b}:(a,b)\in I\}$ is an independent set in $G_Q$. Then $(d,d^3)\not\in I-I$ for any $d\neq 0$. Define the set 
$$X=\{(a+2tN,x,b): (a,b)\in I, x\in [N^3],t\in \{0,\dots,\lfloor N^2/2\rfloor-1\}\}\subset [N^3]^3$$
then $(d,d^2,d^3)\not\in X-X$ for any $d\neq0$. Thus, the previous lemma applied with  $k=3$ and $M=N^3$ implies 
$$\frac{|I|}{N^4}\leq \frac{4|X|}{N^9}\leq O\left(\frac{\log \log N}{(\log N)^{1/2}}\right).$$
\end{proof}

Let $c>0$ be a constant such that writing 
$$f(N)=c\frac{\log \log N}{(\log N)^{1/2}},$$
we have $\alpha(G_{[N]\times [N^3]})\leq f(N)\cdot N^4$.

Our goal is to show that if $Q=[m]\times [m^3]$ for some large integer $m$, and $Q_p$ is a random $p$-sample of $Q$ for some appropriately chosen $p\in (0,1)$, then $G_{Q_p}$ has both large girth and small independence number. Here, a \emph{random $p$-sample} of a set refers to the random set we get by retaining each element of the set independently with probability $p$.

First, we show that, as long as $p$ is not too small, we have 
$$\alpha(G_{Q_p})=O(f(m) |Q_p|).$$ 
We prove this using the Graph container method (\ref{alg:cont}). We first show that every large subset of $Q$ induces many edges. This ensures that if $\Delta_0$ is set correctly, $T_S$ is small for every fingerprint $S$, otherwise $T_S$ spans too many edges.

\begin{lemma}\label{lemma:supersat}
Let $\delta>0$, then the following holds if $m$ and $C$ are sufficiently large. Let $V\subset [m]\times [m^3]$ such that $|V|\geq Cm^4 f(m)$. Then $$\Delta(G_V)>m^{1-\delta}.$$
\end{lemma}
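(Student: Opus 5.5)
We reduce to the independence-number bound $\alpha(G_{[N]\times[N^3]})\le f(N)N^4$ on smaller grids by an averaging (supersaturation) argument. Set $N=\lfloor m^{\delta/2}\rfloor$, or more generally a small power of $m$ chosen below. The graph $G$ is invariant under translations: if $(a,b)\sim(A,B)$ then $(a+u,b+v)\sim(A+u,B+v)$. Hence every translate $P_{u,v}=(u,v)+[N]\times[N^3]$ induces a copy of $G_{[N]\times[N^3]}$.

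Consider all translates $P_{u,v}$ with $u\in\{-N,\dots,m\}$ and $v\in\{-N^3,\dots,m^3\}$. There are about $m^4$ of them, and each point of $[m]\times[m^3]$ lies in exactly $N^4$ of them. So on average
\[
|V\cap P_{u,v}|\approx |V|N^4/m^4\ge C f(m) N^4 .
\]
Since $f$ is slowly varying and $N=m^{\Theta(\delta)}$, we have $f(N)\le O_\delta(f(m))$. Choosing $C$ large, the average exceeds $2f(N)N^4$. A positive proportion of the weight therefore comes from translates with $|V\cap P_{u,v}|\ge 2f(N)N^4$.

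In such a translate, $V\cap P_{u,v}$ is not independent. Moreover, any subset of $P_{u,v}$ of size greater than $f(N)N^4$ still contains an edge. By greedily deleting edges' endpoints, $G_{V\cap P_{u,v}}$ contains at least $(|V\cap P_{u,v}|-f(N)N^4)/2\ge \tfrac12|V\cap P_{u,v}|$ vertex-disjoint edges. So each rich translate contributes $\Omega(|V\cap P_{u,v}|)$ edges with both endpoints inside it.

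Now we double count. An edge $(a,b)\sim(A,B)$ with $|A-a|=d\ge1$ lies in at most $(N-d)(N^3-d^3)\le N^4$ translates. Summing over rich translates gives
\[
e(G_V)\cdot N^4\ \ge\ \sum_{\text{rich}}\Omega(|V\cap P_{u,v}|)\ \ge\ \Omega(|V|N^4),
\]
which yields only $e(G_V)=\Omega(|V|)$. This is too weak, and it is the main obstacle: disjoint edges inside one box do not give degree $m^{1-\delta}$. The fix is to use many scales at once, or better, a finer use of the translation structure.

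For each fixed difference $d\in[N]$, the pairs $\{(a,b),(a+d,b+d^3)\}$ form a matching. So edges counted in boxes can be organized by $d$, and we want each vertex to have many neighbours with distinct $d$'s. I would try the following. A point $x\in V$ is \emph{bad} if it has fewer than $m^{1-\delta}$ neighbours in $V$. Suppose all points are bad; we aim for a contradiction.

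Take $N=m^{1-\delta/2}$, so that $f(N)=\Theta_\delta(f(m))$, and tile $[m]\times[m^3]$ by translates of $[N]\times[N^3]$ and of its dilates in the second coordinate. Here the second factor $m^3/N^3$ is handled by the translation averaging above. Some translate $P$ has $|V\cap P|\ge Cf(N)N^4/2$.

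We then apply the independence bound iteratively to $V\cap P$ with the high-degree vertices removed. Let $V'$ be $V\cap P$ minus a maximal set of disjoint closed neighbourhoods. Since every degree is less than $m^{1-\delta}$, removing $k$ neighbourhoods removes at most $km^{1-\delta}$ points. A maximal independent set of $G_{V\cap P}$ obtained greedily by minimum degree has size at least $|V\cap P|/m^{1-\delta}$.

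This requires
\[
\frac{|V\cap P|}{m^{1-\delta}}\ >\ f(N)N^4 ,
\]
which holds provided $|V\cap P|/N^4\ge f(N)m^{1-\delta}$. That is false for density at most $1$, so we must instead use boxes whose size in the first coordinate is comparable to the degree scale. We therefore take $N\approx m^{1-\delta}$ and accept losing a factor in $f$. The Caro–Wei (greedy) bound $\alpha(G_{V\cap P})\ge |V\cap P|/(\Delta+1)$ then contradicts the Lyall–Magyar bound once $C$ is large enough. This final balancing of parameters is the delicate step, and I expect it to need adjustment.
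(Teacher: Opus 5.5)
Your first half is sound and matches the paper's setup. You average over translates of $[N]\times[N^3]$ and use $f(N)=O_\delta(f(m))$ for $N=m^{\Theta(\delta)}$ to find rich boxes that contain edges. You also correctly see that translations alone only give $e(G_V)=\Omega(|V|)$.

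The gap is the proposed fix, which cannot work for any choice of $N$. If all degrees are below $m^{1-\delta}$, Caro--Wei gives $\alpha(G_{V\cap P})\gtrsim |V\cap P|/m^{1-\delta}$. This exceeds the Lyall--Magyar bound $f(N)N^4$ only if $|V\cap P|/N^4> f(N)m^{1-\delta}$. The right-hand side is larger than $1$ because $f$ decays only polylogarithmically, so taking $N\approx m^{1-\delta}$ changes nothing.

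More fundamentally, the independence bound for a single box certifies average degree at most about $(\text{density})/f(N)\le 1/f(N)$ inside it, which is only polylogarithmic. Moreover, translated boxes of width $N$ contain only edges with $0<|A-a|<N$. For each fixed difference these edges form a matching, so such boxes can never exhibit degree above $2N$. Reaching $m^{1-\delta}$ this way forces $N>m^{1-\delta}/2$, which is the original supersaturation problem again at scale $N$. Also, rescaling only the second coordinate does not preserve adjacency.

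The missing idea is that adjacency is preserved by the anisotropic dilations $\phi_{t,u,q}(x,y)=(t+qx,u+q^3y)$, since $(q(X-x))^3=q^3(Y-y)$. The paper takes $N=\lfloor m^{\delta/10}\rfloor$ tiny and lets $q$ range over $(s/2,s]$ with $s=m/(4N)$, so the copies $\phi_{t,u,q}([N]\times[N^3])$ are spread across the whole grid.

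The argument then double counts copies against edges:
\begin{itemize}
\item For each $q$, your averaging gives $\Omega(Cf(m)m^4)$ rich pairs $(t,u)$, and each rich copy contains an edge. So $\Omega(Cf(m)m^5/N)$ copies contain an edge.
\item A fixed edge $(a,b)\sim(A,B)$ lies in at most $m^{o(1)}N^4$ copies. Indeed, $q$ must divide $A-a\neq 0$, giving $m^{o(1)}$ choices by the divisor bound. Then $t\equiv a \pmod q$ and $u\equiv b\pmod{q^3}$ leave $O(N)$ and $O(N^3)$ choices.
\item Hence $e(G_V)\ge f(m)m^{5-o(1)}N^{-5}>m^{5-\delta}$, and $\Delta(G_V)\ge e(G_V)/|V|\ge e(G_V)/m^4>m^{1-\delta}$.
\end{itemize}
The extra factor $m/N$ over your count comes from the roughly $m/N$ independent scales $q$. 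This is the precise form of your ``many scales at once'' remark.
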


\begin{proof}
Let us briefly outline the proof. For integers $t,u,q$, consider the map $\phi=\phi_{t,u,q}:\mathbb{R}^2\rightarrow\mathbb{R}^2$ defined as
$$\phi((x,y))=(t+qx,u+q^3y).$$
The important property of this map is that it maps edges to edges. We show that $G_V$ has many edges by considering the intersections of $V$ with  the images $\phi_{t,u,q}([N]\times [N^3])$ for an appropriately chosen $N$ and tuples of integers $(t,u,q)$. Indeed, if $|\phi_{t,u,q}([N]\times [N^3])\cap V|> f(N)N^4$, then $\phi_{t,u,q}([N]\times [N^3])\cap V$ must induce at least one edge.

 Let $\eps=\delta/10$, $N=\lfloor m^{\eps}\rfloor$ and $s=m/(4N)$. For every integer $q$ satisfying $s/2<q\leq s$, let $J_q$ be the set of pairs of integers $(t,u)$ such that $1-qN\leq t\leq m-q$ and $1-q^3N^3\leq u\leq m^3-q^3$. Then
 $$|J_q|\leq (m+sN)(m^3+s^3N^3)<4m^4.$$
 Define
$$\Gamma_{t,u,q}=\phi_{t,u,q}([N]\times [N^3])=\{(t+qx,u+q^3y):(x,y)\in [N]\times [N^3]\}.$$
Note that every element $(a,b)\in [m]\times [m^3]$ is contained in $\Gamma_{t,u,q}$ for exactly $N^4$ pairs $(t,u)\in J_q$. Indeed, for every $(x,y)\in [N]\times [N^3]$ there is a unique choice of $(t,u)\in J_q$ such that $(a,b)=(t+qx,u+q^3y)$. Thus,
$$\sum_{(t,u)\in J_q} |V\cap \Gamma_{t,u,q}|=|V|N^4\geq Cf(m)m^4N^4.$$
Say that a pair $(t,u)$ is rich if $|V\cap \Gamma_{t,u,q}|\geq Cf(m)N^4/8$. As $|J_q|\leq 4m^4$, we have
$$\sum_{\substack{(t,u)\in J_q\\ (t,u)\text{ is not rich}}} |V\cap \Gamma_{t,u,q}|\leq Cf(m)m^4N^4/2.$$
Thus, using that $|V\cap \Gamma_{t,u,q}|\leq |\Gamma_{t,u,q}|=N^4$, there are at least $Cf(m)m^4/2$ rich pairs $(t,u)$. In case $C$ is sufficiently large with respect to $\eps$, we have $Cf(m)N^4/8>f(N)N^4$, which then ensures that for every rich pair $(t,u)$, $\Gamma_{t,u,q}\cap V$ induces at least one edge in $G_V$. 

Next, we bound the number of sets $\Gamma_{t,u,q}\cap V$ with $s/2\leq q\leq s$ and $(t,u)\in J_q$ that contain a given edge $(a,b)\sim (A,B)$ in $G_V$. Assume that
$$(t+qx,u+q^3y)=(a,b)\quad\text{and}\quad (t+qX,u+q^3Y)=(A,B).$$
Then $A-a=q(X-x)$, so $q$ divides $A-a$. Thus, as $0\neq |A-a|\leq m$, there are $m^{o(1)}$ choices for $q$. If $q$ is given, we have  $t\equiv a\pmod{q}$, thus there are at most $2m/q=O(N)$ choices for $t$. Similarly, as
$u\equiv b\pmod{q^3}$,  there are at most $2m^3/q^3=O(m^3/s^3)=O(N^3)$ choices for $u$. Thus, in total, any edge may appear in $\Gamma_{t,u,q}\cap V$ for at most $m^{o(1)}N^{4}$ tuples $(t,u,q)$. 

Thus, we can lower bound the number of edges of $G_V$ by $f(m)m^{5-o(1)}N^{-5}$. Indeed, there are at least $(s/2)\cdot (Cf(m)m^4/2)=\Omega(Cf(m)m^5 N^{-1})$ tuples $(t,u,q)$ such that $V\cap \Gamma_{t,u,q}$ induces an edge, and each such edge is counted for at most $m^{o(1)}N^{4}$ different tuples. For $m$ sufficiently large, we have $f(m)m^{5-o(1)}N^{-5}>m^{5-\delta}$, which implies $\Delta(G_V)\geq m^{1-\delta}$.
\end{proof}

Now we are ready to handle the independent sets of random $p$-samples.

\begin{lemma}\label{lemma:container}
Let $\eps>0$, and let $p\geq m^{-1+\eps}$, then the following holds if $m$ is sufficiently large. If $U$ is a random $p$-sample of $[m]\times [m^3]$, then with probability at least $2/3$, we have $|U|\geq pm^4/2$ and $$\frac{\alpha(G_U)}{|U|}\leq O_{\eps}(f(m)).$$ 
\end{lemma}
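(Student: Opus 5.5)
We follow the template of \Cref{lemma:int_ind}, now feeding \Cref{lemma:supersat} into the Container algorithm. First, by the multiplicative Chernoff bound, $|U|\geq pm^4/2$ with probability at least $3/4$ once $m$ is large, since $\mathbb{E}|U|=pm^4\geq m^{3+\eps}$.

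\textbf{Choice of parameters.} Fix $\delta=\eps/4$ and let $C$ be large enough for \Cref{lemma:supersat} with this $\delta$. Run \Cref{alg:cont} on $G_Q$, where $Q=[m]\times[m^3]$, with $\Delta_0=m^{1-\delta}$. Write $L=Cm^4f(m)$. By property 6 of the algorithm, $\Delta(G[T_S])<\Delta_0$. Hence \Cref{lemma:supersat} forces $|T_S|<L$ for every fingerprint $S$. By property 3, $|S|\leq m^4/\Delta_0=m^{3+\delta}=:\beta$. The point is that $\beta$ is much smaller than the target size $\alpha:=K L p$, where $K$ is a large constant depending on $\eps$. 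Indeed,
$$Lp\geq Cm^{3+\eps}f(m)\gg m^{3+\delta}.$$
So here the crude bound on $|S|$ suffices. No dyadic refinement is needed, unlike in the integer distance case.

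\textbf{Union bound over fingerprints.} If $\alpha(G_U)\geq\alpha$, then some fingerprint $S$ satisfies $S\subset U$ and $|T_S\cap U|\geq \alpha-\beta\geq \alpha/2$. The events $S\subset U$ and $|T_S\cap U|\geq\alpha/2$ are independent, because $S$ and $T_S$ are disjoint.

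For a fixed $S$, $|T_S\cap U|$ is a sum of independent Bernoulli variables with mean at most $Lp$. Since $\alpha/2=KLp/2\geq 2Lp$, part 1 of the Chernoff bound gives
$$\mathbb{P}\bigl(|T_S\cap U|\geq \alpha/2\bigr)\leq \exp(-\alpha/12).$$
The number of fingerprints is at most $\binom{m^4}{\leq\beta}\leq \exp\bigl(O(\beta\log m)\bigr)=\exp\bigl(m^{3+\delta+o(1)}\bigr)$. It suffices to drop the factor $p^{|S|}$ entirely. The exponent $-\alpha/12=-\Omega\bigl(m^{3+\eps}f(m)\bigr)$ dominates $m^{3+\delta+o(1)}$, since $\delta<\eps$ and $f(m)=m^{-o(1)}$. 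Therefore $\mathbb{P}(\alpha(G_U)\geq\alpha)\to 0$.

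\textbf{Conclusion.} With probability at least $2/3$, both events hold. Then
$$\frac{\alpha(G_U)}{|U|}\leq \frac{KLp}{pm^4/2}=2KCf(m)=O_\eps(f(m)).$$

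\textbf{Main obstacle.} The delicate point is the interplay of exponents. The supersaturation threshold $m^{1-\delta}$ in \Cref{lemma:supersat} must be close enough to $m$ that the fingerprint count $\exp(m^{3+\delta+o(1)})$ is beaten by the Chernoff tail $\exp(-\Omega(pm^4f(m)))$. This is exactly why the hypothesis $p\geq m^{-1+\eps}$ is needed, and why $\delta$ must be chosen below $\eps$ before fixing $C$ and $m$. Everything else is routine.
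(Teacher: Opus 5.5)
Your proposal is correct and is essentially the paper's argument: the same container run with $\Delta_0=m^{1-\delta}$ for some $\delta<\eps$, the bound $|T_S|<L=Cm^4f(m)$ from \Cref{lemma:supersat}, the crude fingerprint bound $|S|\le m^{3+\delta}$, and a Chernoff-plus-union bound in which $\exp(-\Omega(pL))=\exp(-\Omega(m^{3+\eps}f(m)))$ beats the count $\exp(m^{3+\delta+o(1)})$. The only cosmetic difference is that the paper union-bounds over the containers $Z=S\cup T_S$, using $|Z|\le 2L$ and the threshold $4pL$, rather than over fingerprints with the event $|T_S\cap U|\ge \alpha/2$; since $T_S$ is determined by $S$, this amounts to the same thing.
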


\begin{proof}
Let $Q=[m]\times [m^3]$,  and $G=G_Q$.  Let $\delta=\eps/2$ and let $C$ be the constant given by \Cref{lemma:supersat}. Set $\Delta_0=m^{1-\delta}$ and $L=Cm^4f(m)$, and let $<$ be an arbitrary ordering of the vertices of $G$. Run the Container Algorithm (\Cref{alg:cont}), and let $$\mathcal{C}=\{S\cup T_S:S\text{ is a fingerprint}\}$$ be the collection of containers.

For every fingerprint $S$, we have $|S|\leq m^4/\Delta_0=m^{3+\delta}$ and $|T_S|\leq L$. A simple consequence of this is the following upper bound on the size of $\mathcal{C}$: 
$$|\mathcal{C}|\leq \binom{m^4}{\leq m^{3+\delta}}\leq \exp(2m^{3+\delta}\log m).$$
Moreover, every element of $\mathcal{C}$ has size at most $L+m^{3+\delta}\leq 2L$.

Now consider the random $p$-sample $U$. Clearly, each independent set of $G_U$ is contained in $U\cap Z$ for some container $Z\in \mathcal{C}$. Therefore, it is enough to show that $U\cap Z$ is small for every $Z\in \mathcal{C}$. As $|U\cap Z|$ is the sum of independent Bernoulli random variables and $\mathbb{E}|U\cap Z|\leq 2pL$, the multiplicative Chernoff bound gives
$$\mathbb{P}(|U\cap Z|\geq 4pL)\leq \exp(-2pL/3).$$
Thus, by the union bound
\begin{align*}
    \mathbb{P}(\alpha(G_U)\geq 4pL)&\leq \sum_{Z\in \mathcal{C}}\mathbb{P}(|U\cap Z|\geq 4pL)\leq \exp(2m^{3+\delta}\log m)\cdot\exp(-2pL/3)\\
    &\leq \exp\left(2m^{3+\delta}\log m-\frac{2C}{3}f(m)m^{3+\eps}\right).
\end{align*}
If $m$ is sufficiently large, the choice $\delta<\eps$ implies that the right-hand side is at most $1/10$.

On the other hand, by Chernoff's inequality, we have 
$$\mathbb{P}(|U|\leq pm^4/2)\leq \exp(-pm^4/8)<1/10.$$
Hence, with probability at least $2/3$, we have both $|U|\geq pm^4/2$ and $\alpha(G_U)\leq 4pL$ satisfied. But then
$$\frac{\alpha(G_U)}{|U|}\leq 8Cf(m)=O_{\eps}(f(m)),$$
finishing the proof.
\end{proof}

Next, we deal with the short cycles.

\begin{lemma}\label{lemma:girth}
    Let $k\geq 3$ be an integer and $Q=[m]\times [m^3]$. Then the number of cycles of length $k$ in $G_Q$ is at most $m^{k+2+o(1)}$.
\end{lemma}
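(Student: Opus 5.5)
A $k$-cycle in $G_Q$ is a cyclic sequence $v_0,\dots,v_{k-1}$ of distinct vertices $v_i=(a_i,b_i)\in[m]\times[m^3]$. Write $d_i=a_{i}-a_{i-1}$, indices mod $k$, so each $d_i$ is a nonzero integer with $|d_i|<m$. Consecutive vertices are adjacent, so $b_i-b_{i-1}=d_i^3$. Summing around the cycle gives the two constraints
$$\sum_{i=1}^k d_i=0\quad\text{and}\quad \sum_{i=1}^k d_i^3=0.$$
Conversely, the cycle is determined by the starting vertex $v_0$, which has $m^4$ choices, together with the difference vector $(d_1,\dots,d_k)$. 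So it suffices to show that the number of vectors $(d_1,\dots,d_k)\in([-m,m]\setminus\{0\})^k$ satisfying both equations is at most $m^{k-2+o(1)}$. This would give the bound $m^{4}\cdot m^{k-2+o(1)}=m^{k+2+o(1)}$.

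I count the solutions by fixing $d_1,\dots,d_{k-2}$ freely, which gives at most $(2m)^{k-2}$ choices. Set $\sigma=-(d_1+\dots+d_{k-2})$ and $\tau=-(d_1^3+\dots+d_{k-2}^3)$. The remaining pair must satisfy $x+y=\sigma$ and $x^3+y^3=\tau$, where $x=d_{k-1}$ and $y=d_k$.

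\textbf{Case $\sigma\neq 0$.} We have $x^3+y^3=\sigma(\sigma^2-3xy)$, so $xy$ is determined by $\sigma$ and $\tau$. Then $x,y$ are the roots of a fixed quadratic, giving at most $2$ pairs.

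\textbf{Case $\sigma=0$.} Then $y=-x$ and $\tau=0$ is forced, but $x$ is free, giving up to $2m$ choices. This is the one place the naive count fails, and I expect it to be the main obstacle.

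To handle the degenerate case I would use a recursive structure. If $d_{k-1}+d_k=0$, then $(d_1,\dots,d_{k-2})$ itself satisfies both equations, so it is a solution of the same system of length $k-2$.

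Let $S_j$ denote the number of solutions of length $j$ with all $d_i$ nonzero. The two cases give
$$S_k\le 2(2m)^{k-2}+2m\,S_{k-2}.$$
For the base cases, $S_1=0$ because entries are nonzero. For $S_2$, the equation $d_1+d_2=0$ gives $S_2\le 2m$.

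Here $S_2\le 2m$ exceeds the target $m^{0}$, so the recursion needs care. Unwinding it gives $S_k\lesssim k\cdot(2m)^{k-2}+(2m)^{k/2}$. The second term is the count of vectors built entirely from cancelling pairs $d,-d$; it is at most $m^{k-2}$ once $k\ge4$, and for $k=3$ that term is absent since $S_1=0$.

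For $k=3$ directly: $d_1+d_2+d_3=0$ together with $\sum d_i^3=0$ gives $3d_1d_2d_3=0$, which is impossible with nonzero entries, so $S_3=0$. This matches triangle-freeness.

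For $k\ge 4$, all terms are $O_k(m^{k-2})$, so the number of cycles is $O_k(m^{k+2})$, consistent with the stated $m^{k+2+o(1)}$. I would double-check the induction bookkeeping. Distinctness of the vertices is not needed, since we only need an upper bound.
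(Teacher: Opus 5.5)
Your argument is correct. Its skeleton matches the paper's: encode the cycle by its starting vertex and difference vector, derive $\sum d_i=0=\sum d_i^3$, fix $k-2$ differences, and control the last two through $x^3+y^3=\sigma(\sigma^2-3xy)$, which the paper writes as $D^3-F=3d_1d_2D$.

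There are two differences.

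First, in the nondegenerate case the paper only uses that $d_1,d_2$ divide $D^3-F\neq 0$ and applies the divisor bound; that is the source of its $m^{o(1)}$ factor. You instead note that $xy$ is then determined, so $(x,y)$ are roots of a fixed quadratic. This gives at most two choices and hence the sharper bound $O_k(m^{k+2})$.

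Second, the degenerate case you flag as the main obstacle is handled in the paper in one line using distinctness of the cycle's vertices. In your indexing, $d_{k-1}+d_k=0$ forces $a_0=a_{k-2}$ and $b_0-b_{k-2}=d_{k-1}^3+d_k^3=0$, i.e.\ $v_0=v_{k-2}$. That is impossible in a cycle of length $k\ge 3$. So the recursion $S_k\le 2(2m)^{k-2}+2m\,S_{k-2}$ is not needed for the lemma. It is nevertheless correct: unwinding gives terms $2(2m)^{k-2-j}$, plus, for even $k$, a final term $(2m)^{k/2}\le(2m)^{k-2}$ once $k\ge 4$; for odd $k$ it bottoms out at $S_3=0$. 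What the recursion buys is a slightly more general statement. Because it never uses distinctness, it bounds the number of all closed walks of length $k$ in $G_Q$, backtracking allowed, by $O_k(m^{k+2})$.
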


\begin{proof}
Let $(a_1,b_1),\dots,(a_k,b_k)$ be the vertices of a cycle of length $k$, and let $d_i=a_i-a_{i-1}$ for $i\in [k]$ (where indices are meant modulo $k$). Then $b_i-b_{i-1}=d_i^3$ and  
$$d_1+\dots+d_k=0\quad\text{and}\quad d_1^3+\dots+d_k^3=0.$$
Write $D=d_3+\dots+d_k$ and $F=d_3^3+\dots+d_k^3$, then
$$d_1+d_2+D=0\quad\text{and}\quad d_1^3+d_2^3+F=0.$$
Combining the two equalities gives
$$D^3-F=3d_1d_2D.$$
First, assume that $D^3-F=0$. As $d_1,d_2\neq 0$, we must then have $D=0$. But then $d_1=-d_2$, which means $(a_2,b_2)=(a_k,b_k)$, contradiction. Thus, we may assume that $D^3-F\neq 0$. Observe that $d_1$ and $d_2$ are divisors of $D^3-F$. As $|D^3-F|\leq 2k^3m^3$, there are $m^{o(1)}$ choices for the pair $(d_1,d_2)$ given $(D,F)$.

Note that the integers $a_1,b_1,d_1,\dots,d_k$ uniquely determine the $k$-cycle. As there are at most $m^{k-2}$ choices for $d_3,\dots,d_k$, and $m^4$ choices for $(a_1,b_1)$, the total number of $k$-cycles in $G_Q$ is at most $m^{k+2+o(1)}$.
\end{proof}

Now everything is set to prove  \Cref{thm:line_alter}. Assume that $m$ is sufficiently large with respect to $g$, and let $p=m^{-1+1/g}$. Let $V$ be a $p$-sample of the elements of $[m]\times [m^3]$, then by \Cref{lemma:container}, with probability at least $2/3$, we have 
$$|V|\geq \frac{1}{2}pm^4\quad\text{and}\quad\frac{\alpha(G_V)}{|V|}\leq O_g(f(m)).$$ 
Let $X$ be the number of cycles of length less than $g$. Then by \Cref{lemma:girth}, we have
$$\mathbb{E}X\leq \sum_{k=4}^{g-1}p^k m^{k+2+o(1)}\leq \sum_{k=4}^{g-1} m^{2+o(1)+k/g}<m^3.$$
Therefore, by Markov's inequality, with probability at least $2/3$, we have $X\leq 3m^3$. Therefore, there exists an outcome of $V$ such that $|V|\geq pm^4/2=m^{3+1/g}/2$, $\alpha(G_V)/|V|\leq O_g(f(m))$, and $X<3m^3$. Given such a set $V$, remove a vertex from every cycle of length less than $g$, and let $W$ be the resulting set. Then $|W|\geq |V|-3m^3\geq |V|/2$, 
$$\frac{\alpha(G_W)}{|W|}\leq \frac{2\alpha(G_V)}{|V|}=O_{g}(f(m)),$$
and $G_W$ has girth at least $g$. Writing $n=|W|$, we have $O_{g}(f(m))=O_g(f(n))$, thus $G_W$ is an intersection graph of lines satisfying the required properties.

\bigskip

\noindent
\textbf{Discussion.} The construction presented here was partly motivated by the construction of Norin (see \cite{D21}). Indeed, he also gave a parametric family of lines, and showed that double-shift graphs can be realized as intersection graphs of lines from this family. Interestingly, this is quite different from the other constructions of lines given by us \cite{TomLine} and Davies \cite{D21}. We instructed ChatGPT Astra to look specifically for parametric families, which quickly led to the main ideas presented in this paper.

\section{Tangency graphs of circles and intersection graphs of lines}\label{sect:circle}

In this section, we prove \Cref{thm:main_line} and \Cref{thm:main_circle}. We describe a family of graphs which can be realized as both a tangency graph of circles and an intersection graph of lines. As in the previous section, we first find a parametric family, where tangency or intersection is described by a simple algebraic formula, and identify our lines or circles with points of $\mathbb{R}^3$. Then, we consider the grid $[m]^3$, and apply random sampling to destroy short cycles.

Define the function $F:\mathbb{R}^3\rightarrow \mathbb{R}$ such that
$$F(x,y,z)=xz-y^2.$$
Define the (infinite) graph $G$ on $\mathbb{R}^3$, where $u\sim v$ if $F(u-v)=0$. Also, given a finite set $Q\subset \mathbb{R}^3$, let $G_Q=G[Q].$ First, we show that $G_Q$ can be realized as both a tangency graph of circles and an intersection graph of lines, and then we can focus on the graph $G_Q$ alone.

\begin{lemma}
    Let $Q$ be a finite set.
    \begin{itemize}
        \item[1.]  $G_Q$ can be realized as a tangency graph of circles in the plane, where every tangency is internal.

        \item[2.] $G_Q$ can be realized as an intersection graph of lines in $\mathbb{R}^3$.
    \end{itemize}
    
\end{lemma}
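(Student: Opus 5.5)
For part 1, I would realize each point $u=(x,y,z)\in Q$ as a circle whose internal-tangency condition is exactly the quadratic form $F$. The starting observation is that
$$-F(x,y,z)=y^2+\left(\tfrac{x-z}{2}\right)^2-\left(\tfrac{x+z}{2}\right)^2 .$$
Two circles with centers $c_1,c_2$ and radii $r_1,r_2$ are internally tangent iff $|c_1-c_2|^2-(r_1-r_2)^2=0$ and they are distinct. This expression is a form of signature $(2,1)$ in the differences of the parameters.

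So I assign to $u$ the circle with center $\bigl(y,\tfrac{x-z}{2}\bigr)$ and radius $\tfrac{x+z}{2}+K$, where $K$ is a large constant. The map $u\mapsto$ (center, radius) is an injective affine map. Hence $|c_u-c_v|^2-(r_u-r_v)^2=-F(u-v)$, and distinct points give distinct circles.

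Two distinct circles meet in exactly one point iff either $|c_1-c_2|=r_1+r_2$, or $|c_1-c_2|=|r_1-r_2|>0$. The case $|c_1-c_2|=|r_1-r_2|=0$ would give the same circle. I then choose $K$ larger than the diameter of the finite set of centers, and also large enough that all radii are positive. This forces $|c_u-c_v|<2K\le r_u+r_v$, so external tangency never occurs. The tangency graph is therefore exactly $G_Q$, with all tangencies internal.

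For part 2, I would use the family of lines
$$\ell_u=\{(t,\;x+yt,\;y+zt):t\in\mathbb{R}\}, \qquad u=(x,y,z).$$
The map $u\mapsto\ell_u$ is injective. Writing $w=u-v$, the lines $\ell_u$ and $\ell_v$ meet iff some $t$ satisfies $w_1+w_2t=0$ and $w_2+w_3t=0$. Splitting into cases:
\begin{itemize}
\item If $w_2\neq 0$, this is solvable iff $w_1w_3=w_2^2$, that is, iff $F(w)=0$.
\item If $w_2=0$ and $w_1=0$, then $t=0$ works, and indeed $F(w)=0$.
\item If $w_2=0$ and $w_1\neq0$, there is no solution. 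Here $F(w)=w_1w_3$, so the one bad case is $w\in\mathbb{R}(1,0,0)\setminus\{0\}$: there $F(w)=0$ but the lines are parallel.
\end{itemize}

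The main obstacle is exactly this degenerate direction $(1,0,0)$. I would remove it by a symmetry of $F$. Identify $(x,y,z)$ with the binary quadratic form $xa^2+2yab+zb^2$. Then $F$ is minus its discriminant, so the linear action of $\mathrm{SL}_2(\mathbb{R})$ on forms preserves $F$, and it acts transitively on the nonzero cone directions. Since $Q-Q$ is finite, it contains finitely many directions. A generic $A\in \mathrm{SL}_2(\mathbb{R})$ therefore maps none of them onto the ray of $(1,0,0)$.

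Replacing $Q$ by $AQ$ gives an isomorphic graph, since $F(Au-Av)=F(u-v)$. For $AQ$ the family $\ell_{Au}$ has intersection graph exactly $G_Q$.
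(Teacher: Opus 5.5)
Your proposal is correct apart from one misstated inequality in Part 1. Part 1 follows the paper's construction, and Part 2 takes a genuinely different route.

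\textbf{Part 1.} This is the paper's construction up to rescaling: the paper uses center $(a-c,2b)$ and radius $R+a+c$, you use center $\bigl(y,\tfrac{x-z}{2}\bigr)$ and radius $\tfrac{x+z}{2}+K$. Your identity $|c_u-c_v|^2-(r_u-r_v)^2=-F(u-v)$ is right.

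The chain $|c_u-c_v|<2K\le r_u+r_v$ is not justified by your choice of $K$, because the second inequality needs $x_u+z_u+x_v+z_v\ge 0$. With $K$ only exceeding the diameter of the centers and keeping radii positive, external tangencies can occur. Take $Q=\{u,v\}$ with $u=(-N,0,-N)$, $v=(-N+10,0,-N-10)$, $N\ge 6$ and $K=N+5$. Both radii equal $5$ and the centers are at distance $10$, so the circles are externally tangent, yet $F(u-v)=-100\neq 0$.

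The fix is easy. Take $K>D+\max_{u\in Q}|x_u+z_u|$, where $D$ is the diameter of the centers, or first translate $Q$ into the positive octant, which does not change $G_Q$. The paper's choice $R>8\max_{v\in Q}\|v\|_\infty$ plays exactly this role. So this is a slip, not a gap.

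\textbf{Part 2.} The two routes differ in where the genericity is applied.

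\emph{The paper.} It builds genericity into the parametrization, $\ell_u=\{(x,b+(a-\lambda b)x,c+(b-\lambda c)x)\}$ with $\lambda$ generic, and checks the degenerate cases $\alpha-\lambda\beta=0$ and $\beta-\lambda\gamma=0$ by hand. In effect, $\lambda$ is a projective change of coordinates of the ambient $\mathbb{R}^3$. The map $(X,Y,Z)\mapsto (X,Y,Z)/(1+\lambda X)$ sends the $\lambda=0$ family to the $\lambda$-family. It turns the common point at infinity of a parallel pair into the finite point $x=1/\lambda$, and genericity of $\lambda$ keeps genuine intersection points from being sent to infinity.

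\emph{Your route.} You act on the parameter space instead. You keep the simplest family, isolate the single bad null direction $(1,0,0)$, and move it away with an isometry of $F$. This is valid. Since $F(Aw)=F(w)$, only null $w$ can land on $\mathbb{R}(1,0,0)$. For each such $w$, the set of bad $A$ is a coset of the two-dimensional stabilizer of that null line, so finitely many such cosets cannot cover $\mathrm{SL}_2(\mathbb{R})$. It is this properness, rather than transitivity itself, that your ``therefore'' needs. Also, ``ray'' should be ``line''; this is harmless because $Q-Q=-(Q-Q)$.

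An explicit choice also works. The substitution $a\mapsto a+\mu b$ acts as $(x,y,z)\mapsto(x,\,y+\mu x,\,z+2\mu y+\mu^2x)$. It fails only if some element of $Q-Q$ is a multiple of $(1,-\mu,\mu^2)$, which excludes finitely many $\mu$.

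\emph{Comparison.} Your route explains why a perturbation must exist: the obstruction is a single null direction, and $F$ has a large symmetry group. The paper's route is a self-contained computation that needs no group action.
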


\begin{proof}
1. Let $R>8\max_{v\in Q}||v||_{\infty}$. For $u=(a,b,c)\in \mathbb{R}^3$, define the  circle $D_{u}$ by the equation
$$(x-(a-c))^2+(y-2b)^2=(R+a+c)^2.$$
We show that $D_u$ and $D_v$ touch if and only if $F(u-v)=0$.

Write $u=(a,b,c)$ and $v=(A,B,C)$. Note that the centers of $D_{u},D_{v}$ are contained in $[-R/4,R/4]\times [-R/4,R/4]$, and their radii are at least $R/2$, so if they touch, then it is an inner tangency. The squared distance of their centers is $||p_{v}-p_{u}||^2=(A-C-(a-c))^2+(2B-2b)^2$, while the difference of the radii is $r_{v}-r_{u}=(A+C-a-c)$. Tangency occurs exactly if
$$||p_v-p_u||^2=(r_{v}-r_{u})^2,$$
which simplifies to
$$(A-a)(C-c)=(B-b)^2.$$

\medskip
2. Let $\lambda\neq 0$ be a generic real number, specified later. For $u=(a,b,c)\in \mathbb{R}^3$, define the  line 
$$\ell_{u}=\{(x,b+(a-\lambda b)x,c+(b-\lambda c)x):x\in \mathbb{R}\}.$$
For $u,v\in Q$, we show that $\ell_u$ and $\ell_v$ intersect if and only if $F(u-v)=0$. Write $u=(a,b,c)$ and $v=(A,B,C)$. The lines $\ell_u$ and $\ell_v$ intersect if and only if
$$(x,b+(a-\lambda b)x,c+(b-\lambda c)x)=(y,B+(A-\lambda B)y,C+(B-\lambda C)y)$$
has a solution. Comparing the coordinates, this is equivalent to $x=y$, 
$$(a-A-\lambda (b-B))x+b-B=0\quad\text{and}\quad (b-B-\lambda (c-C))x+c-C=0.$$
Writing $\alpha=a-A$, $\beta=b-B$ and $\gamma=c-C$, we can rewrite these two equalities as
\begin{equation}\label{equ:line}
(\alpha-\lambda \beta)x+\beta=0\quad\text{and}\quad (\beta-\lambda\gamma)x+\gamma=0.
\end{equation}

First, assume that $\alpha-\lambda \beta=0$. Then a generic choice of $\lambda$ implies  $\alpha=\beta=0$, and then $x=1/\lambda$ is a solution of (\ref{equ:line}). Therefore, $\ell_u$ and $\ell_v$ intersect, which is consistent with  $F(u-v)=F(0,0,\gamma)=0$. We can argue similarly if $\beta-\lambda \gamma=0$. 

Otherwise, there is a solution of (\ref{equ:line}) if and only if $(\alpha-\lambda \beta)\gamma=(\beta-\lambda\gamma)\beta$, which is equivalent to $F(u-v)=F(\alpha,\beta,\gamma)=0$.
\end{proof}

In what follows, our goal is to find a set $Q$ for which $G_Q$ has large girth and small independence number. In particular, we prove the following theorem, which then implies both \Cref{thm:main_line} and \Cref{thm:main_circle}.

\begin{theorem}\label{thm:mainG_Q}
For every $g$ and infinitely many $n$, there is a set $V\subset \mathbb{R}^3$ of size $n$ such that $G_V$ has girth at least $g$ and $\alpha(G_V)\leq O_g(n\frac{(\log \log n)^2}{\log n})$, and thus $\chi(G_V)\geq \Omega_g(\frac{\log n}{(\log\log n)^2})$. 
\end{theorem}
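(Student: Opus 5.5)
We follow the template of the proof of \Cref{thm:line_alter}: take a grid $Q\subset\mathbb{Z}^3$ adapted to the form $F(x,y,z)=xz-y^2$, prove a supersaturation lemma for $G_Q$, run the container algorithm (\Cref{alg:cont}) to control independent sets of a random $p$-sample, and finally delete a vertex from each short cycle. The target bound $(\log\log n)^2/\log n$ suggests using the density increment bound for the configuration $\{x,\,x+(d,de,de^2)\}$, i.e.\ differences lying on the cone $F=0$, as in Lyall--Magyar type results: a set $X\subset[M]^3$ with no nonzero difference of the form $(a,b,c)$ with $ac=b^2$ should have density $O((\log\log M)^2/\log M)$. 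Concretely, $F(u-v)=0$ exactly when $u-v=t(q^2,qr,r^2)$ up to rational scaling, and restricting to $t=1$, $r=1$ gives differences $(q^2,q,1)$; more usefully, fixing the second parameter gives the pattern $(d^2,d,1)\cdot\lambda$. I would take $Q=[m]\times[m^2]\times[m^3]$-type weighted grid (or simply $[m]^3$ with differences $(d,de,de^2)$), and derive from the cited polynomial Roth/Sárközy-type theorem (with $k=2$, giving exponent $1$) that $\alpha(G_{[N]^3})\le f(N)N^3$ with $f(N)=c(\log\log N)^2/\log N$, by embedding as in the proof for $\ell_{a,b}$: an independent set avoids differences $(d^2,d,1)$-multiples, which after lifting avoids $(d,d^2)$ in a suitable projection.

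Next, supersaturation in the spirit of \Cref{lemma:supersat}: the map $\phi(x,y,z)=(t_1+qx,t_2+qy,t_3+qz)$ (homotheties with translations) preserves $F(u-v)=0$, since $F$ is homogeneous. Averaging over translated dilated copies of $[N]^3$ with $N=m^{\eps}$, any $V\subset[m]^3$ with $|V|\ge Cf(m)m^3$ has many copies containing more than $f(N)N^3$ points, each producing an edge; an edge $u\sim v$ lies in at most $m^{o(1)}N^{3}$ copies since $q$ must divide $\gcd$ of the coordinates of $u-v$ (bounded by divisor counting). This gives $e(G_V)\ge m^{4-o(1)}$ roughly, so $\Delta(G_V)\ge m^{1-\delta}$. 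Then the container argument of \Cref{lemma:container} with $\Delta_0=m^{1-\delta}$, $L=Cf(m)m^3$, and $p\ge m^{-1+\eps}$ gives $\alpha(G_U)/|U|=O(f(m))$ with probability $\ge 2/3$.

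For girth, count $k$-cycles in $G_{[m]^3}$: a cycle is determined by a starting vertex ($m^3$ choices) and differences $w_1,\dots,w_k$ on the cone summing to zero. Cone points in $[-m,m]^3$ number $m^{2+o(1)}$ (parametrize $(\lambda q^2,\lambda qr,\lambda r^2)$), so freely choosing $w_1,\dots,w_{k-2}$ costs $m^{(2+o(1))(k-2)}$ and the last two are forced up to $m^{o(1)}$ choices by solving $w_{k-1}+w_k=-W$ with both on the cone (a conic-intersection/divisor argument, the main obstacle, needing the degenerate case $W$ itself on the cone or $W=0$ handled as in \Cref{lemma:girth}). Total $m^{2k-1+o(1)}$, versus $p^k m^{2k-1}$ expected; with $p=m^{-1+1/g}$ the vertex count is $pm^3=m^{2+1/g}$ while $\mathbb{E}X\le \sum_k m^{k-1+k/g+o(1)}$ — too large. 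So I would instead sample with $p=m^{-2+1/g}$... which breaks containers. Resolving this is the crux: I expect to need a finer cycle count $m^{k+2+o(1)}$, obtained by showing that given $w_3,\dots,w_k$ only, $(w_1,w_2)$ has $m^{o(1)}$ options, and noting each step's difference has degree $m^{1+o(1)}$ (degree of $G_{[m]^3}$ is $m^{1+o(1)}$ by the parametrization restricted to the box). Then $p=m^{-1+1/g}$ gives $\mathbb{E}X\ll m^{3}\le |U|/2$ fails marginally, so use $[m]\times[m^2]\times[m^3]$-style anisotropic grid to gain the needed slack, then delete one vertex per short cycle to conclude.
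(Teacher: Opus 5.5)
\textbf{The proposal has a genuine gap: the sampling density is wrong, and nothing else in the plan can compensate.}

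Since $F(t,0,0)=F(0,0,t)=0$, every line of $[m]^3$ parallel to the first or third axis is a clique of $G$; more generally, so is every line in a direction $(r^2,\pm rs,s^2)$. The $m^2$ lines parallel to the first axis already give $\Theta(m^5)$ triangles. They also force any triangle-free induced subgraph of $G_{[m]^3}$ to have at most $2m^2$ vertices.

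Hence no sample with $p\gg 1/m$ can be repaired by deletion. At $p=m^{-1+\eps}$ you expect $p^3m^5=m^{2+3\eps}$ triangles against only $|U|\approx m^{2+\eps}$ vertices. Your own computation detects this, but none of the suggested fixes works:
\begin{itemize}
\item $p=m^{-2+1/g}$ kills the containers.
\item At $p=m^{-1+1/g}$, the count $m^{k+2+o(1)}$ still overshoots $|U|=m^{2+1/g}$ by a factor $m^{(k-1)/g}$.
\item An anisotropic box such as $[m]\times[m^2]\times[m^3]$ is worse, because its lines parallel to the third axis are cliques of size $m^3$.
\end{itemize}
You are forced to take $p=O(1/m)$; the paper takes $p=\eps/m$ with a small constant $\eps$. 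At that density your container lemma breaks: with $\Delta_0=m^{1-\delta}$ there are about $\exp(m^{2+\delta})$ containers, while the Chernoff exponent is only $pL\asymp f(m)m^2$.

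The Lyall--Magyar step is also beside the point. Because $(1,0,0)$ lies on the cone, $\alpha(G_{[N]^3})\le N^2$ trivially. The factor $(\log\log n)^2/\log n$ does not come from a density theorem at all.

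\textbf{What the paper uses instead at $p=\eps/m$.} Three ingredients are needed.
\begin{itemize}
\item \emph{Supersaturation linear in $|V|$, with a logarithmic gain.} For $|V|\ge m^{5/2}$ one has $\Delta(G_V)\ge c_0|V|\log m/m^2$. Partition $[m]^3$ into the $O(r^2m^2)$ clique-lines of each primitive direction $(r^2,rs,s^2)$. Cauchy--Schwarz gives $\gtrsim |V|^2/(r^2m^2)$ edges per direction, and summing over coprime $s<r\le m^{1/4}/10$ yields the $\log m$ factor.
\item \emph{A refined fingerprint count.} Each vertex added to $S$ removes at least $c_0|T|\log m/m^2$ vertices of $T$, so halving $|T|$ costs $O(m^2/\log m)$ fingerprint vertices. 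Thus $|S|\lesssim \frac{m^2}{\log m}\log(1/\mu)$ while $|T_S|\le\mu m^3$. Summing $p^{|S|}$ over fingerprints, the entropy is about $m^2(\log\log m)^2/\log m$. This is beaten by the Chernoff exponent $\Theta(\mu m^2)$ once $\mu$ is a large multiple of $(\log\log m)^2/\log m$, which is exactly where the bound in the theorem comes from.
\item \emph{Counting only induced cycles.} This suffices because a shortest cycle is induced. Write $v-u=(a,b,c)$, $w-u=(x,y,z)$ and $D=ac-b^2$. For $a\neq 0$, the conditions $u\sim w\sim v$ imply $D(2x-a)^2+4(ay-bx)^2=Da^2$. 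Hence non-adjacent $u,v$ have $m^{o(1)}$ common neighbours, while adjacent $u,v$ have all their common neighbours on the line $uv$.
\end{itemize}

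The last point gives exactly $O(m^5)$ triangles, all collinear, so $\mathbb{E}X_3=O(\eps^3m^2)$ is a small fraction of $|U|\approx\eps m^2$. It also gives $m^{\ell+1+o(1)}$ induced $\ell$-cycles for $\ell\ge 4$, so $\mathbb{E}X_\ell\le m^{1+o(1)}$. By contrast, your bound $m^{k+2+o(1)}$ on all $k$-cycles gives $\eps^k m^{2+o(1)}$ at this density, which is not small compared with $|U|$.
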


As in the proof of \Cref{thm:line_alter}, we start with the full grid $Q=[m]^3$, and apply random sampling. In order to bound the independence number of the random sample, we again apply the graph container method. Thus, we start with a supersaturation result.

\begin{lemma}\label{lemma:circle_saturation}
Let $V\subset [m]^3$ of size at least $m^{5/2}$. Then
$$\Delta(G_V)\geq \Omega\left(\frac{|V|\log m}{m^2}\right).$$
\end{lemma}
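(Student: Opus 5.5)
We follow the strategy of \Cref{lemma:integer_sat}: we exhibit many families of parallel lines in $\mathbb{Z}^3$ along which every pair of lattice points is adjacent in $G$, and use Cauchy--Schwarz on each family. The key observation is that $F$ is a quadratic form, so $F(tw)=t^2F(w)$. Hence, if $w\in\mathbb{Z}^3$ is a nonzero \emph{isotropic} vector, meaning $F(w)=0$, then for every $x\in\mathbb{Z}^3$ the points of $[m]^3\cap\{x+tw:t\in\mathbb{Z}\}$ form a clique in $G_{[m]^3}$. The primitive isotropic vectors are parametrized by coprime pairs $(u,v)$ via
$$w_{u,v}=(u^2,uv,v^2),$$
since $u^2v^2-(uv)^2=0$. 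Distinct coprime pairs $(u,v)$ with $u,v\geq 1$ give distinct directions, and lines of different directions share at most one pair of points. So edges counted in different directions are distinct.

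Fix a coprime pair with $1\leq u,v\leq K$, where $K=m^{1/4}$ (any small power will do). The lines of direction $w=w_{u,v}$ meeting $[m]^3$ each have a point in the slab set $\{z\in[m]^3: z_i\leq w_i \text{ for some } i\}$. Thus there are at most $k_w\leq m^2(u^2+uv+v^2)\leq 3m^2\max(u,v)^2$ such lines. Cauchy--Schwarz, exactly as in \Cref{lemma:integer_sat}, shows that the number of edges of $G_V$ inside these lines is at least
$$\frac{|V|^2}{2k_w}-\frac{|V|}{2}\geq \frac{|V|^2}{12 m^2\max(u,v)^2}.$$
The last inequality uses $|V|\geq m^{5/2}\geq 12m^2K^2$; this is where the hypothesis $|V|\geq m^{5/2}$ and the choice $K=m^{1/4}$ enter.

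Next we sum over all coprime pairs $1\leq u,v\leq K$. Split dyadically by $\max(u,v)\in[t,2t)$. There are $\Omega(t^2)$ coprime pairs in each range, and each contributes at least $|V|^2/(48m^2t^2)$. So each dyadic scale contributes $\Omega(|V|^2/m^2)$, and there are $\Omega(\log K)=\Omega(\log m)$ scales. This gives
$$e(G_V)\geq \Omega\left(\frac{|V|^2\log m}{m^2}\right).$$
Averaging, some vertex has degree at least $2e(G_V)/|V|=\Omega(|V|\log m/m^2)$, which proves the lemma.

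The only delicate points are checking that distinct $(u,v)$ give edge-disjoint line families, which holds because two points determine the direction of the line through them, and bounding the number of lines per direction. Both are routine. The main obstacle is really recognizing the isotropic parametrization $w_{u,v}=(u^2,uv,v^2)$, which plays the role here that Pythagorean vectors played in \Cref{lemma:integer_sat}.
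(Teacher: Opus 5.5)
Your proof is correct and is essentially the paper's proof. The paper uses the same isotropic directions $(r^2,rs,s^2)$ over coprime pairs, the same bound of $O(r^2m^2)$ lines per direction, Cauchy--Schwarz on each direction, and a dyadic sum giving the $\log m$ factor. One small slip: with $K=m^{1/4}$ the claimed inequality $m^{5/2}\geq 12m^2K^2$ is false. You only need $|V|\geq 6m^2K^2$, so taking $K=m^{1/4}/10$, as the paper does, fixes this without affecting the $\Omega(\log m)$ count of dyadic scales.
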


\begin{proof}
    Let $r>s$ be coprime positive integers such that $r<m^{1/4}/10$ and let $y=(r^2,rs,s^2)$. Observe that $F(ty)=0$ for every $t\in \mathbb{Z}$. The parallel lines $\{u+ty:t\in \mathbb{Z}\}$ partition $V\subset [m]^3$ into at most $3r^2m^2$ non-empty parts, each inducing a clique in $G_{V}$. Indeed, the bound on the number of lines follows from the observation that each line $\{u+ty:t\in \mathbb{Z}\}$ intersecting $[m]^3$ contains a point of $[m]^3$ whose minimal coordinate is at most $r^2$. Let $\mathcal{L}_{r,s}$ be the collection of such lines intersecting $[m]^3$, then these lines induce at least
    $$\sum_{L\in \mathcal{L}_{r,s}} \binom{|L\cap V|}{2}=\frac{1}{2}\sum_{L\in \mathcal{L}_{r,s}} (|L\cap V|^2-|L\cap V|)\geq \frac{1}{2}\left(\frac{|V|^2}{|\mathcal{L}_{r,s}|}-|V|\right)\geq \frac{|V|^2}{6r^2m^2}-\frac{|V|}{2}\geq \frac{|V|^2}{12r^2m^2}.$$
    edges in $V$, where the last inequality follows from the upper bound $r<m^{1/4}/10$. 

    In case $(r,s)\neq (r',s')$, the lines $\mathcal{L}_{r,s}$ and $\mathcal{L}_{r',s'}$ induce distinct edges in $V$, thus we get
    $$e(G_V)\geq \sum_{\substack{1\leq s<r\leq m^{1/4}/10 \\ \text{gcd}(r,s)=1}} \frac{|V|^2}{12r^2m^2}=\Omega\left(\frac{|V|^2\log m}{m^2}\right).$$
    The last equality follows from the fact that for every positive integer $t$, the number of coprime pairs of integers $(r,s)$ with $t/2<s<r<t$ is $\Omega(t^2)$.

    As $\Delta(G_V)\geq e(G_V)/|V|$, we get the desired inequality.
\end{proof}

\begin{lemma}\label{lemma:circle_independent}
Let $\eps\in (0,1/2)$, $m$ sufficiently large, and let $p=\eps/m$. Let $U$ be a random $p$-sample of $[m]^3$, then with probability at least $2/3$, we have $|U|\geq pm^3/2$ and
$$\frac{\alpha(G_U)}{|U|}\leq O_{\eps}\left(\frac{(\log \log m)^2}{\log m}\right).$$
\end{lemma}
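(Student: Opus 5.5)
We follow the scheme of \Cref{lemma:int_ind} almost verbatim, with \Cref{lemma:circle_saturation} playing the role of \Cref{lemma:integer_sat}. First, by the multiplicative Chernoff bound, $|U|\geq pm^3/2=\eps m^2/2$ with probability at least $3/4$. Let $c_0$ be the constant in \Cref{lemma:circle_saturation}. Set
$$\Delta_0=m\quad\text{and}\quad L=\frac{m^3}{c_0\log m}.$$
We run the Container algorithm (\Cref{alg:cont}) on $G_{[m]^3}$ with parameter $\Delta_0$. Since $L\geq m^{5/2}$, \Cref{lemma:circle_saturation} applies to every intermediate set $T$ with $|T|\geq L$. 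It gives $\Delta(G[T])\geq c_0|T|\log m/m^2\geq m=\Delta_0$ there, so the algorithm cannot stop while $|T|\geq L$, and hence $|T_S|\leq L$.

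Next we bound the fingerprint by the dyadic argument. Each time a vertex enters $S$ while $|T|\geq L$, at least $c_0|T|\log m/m^2$ vertices are removed. So while $|T|$ halves from $t$ to $t/2$, at most $2m^2/(c_0\log m)$ vertices join $S$. Halving from $m^3$ down to $L$ takes $O(\log\log m)$ steps, and afterwards at most $L/\Delta_0=O(m^2/\log m)$ further vertices join. Hence
$$|S|\leq\beta:=\left\lfloor\frac{Km^2\log\log m}{\log m}\right\rfloor$$
for a suitable constant $K=K(c_0)$. We take $\alpha=4\beta$ as the target bound on the independence number. This gives $\alpha/|U|=O_\eps(\log\log m/\log m)$, which is even better than claimed; the extra $\log\log m$ factor in the statement gives slack if needed.

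Finally we take a union bound over fingerprints. If $\alpha(G_U)\geq\alpha$, then some fingerprint $S$ with $|S|=k\leq\beta$ has $S\subset U$ and $|T_S\cap U|\geq 3\beta$. These two events are independent because $S$ and $T_S$ are disjoint. Therefore
$$\mathbb{P}(\alpha(G_U)\geq\alpha)\leq\sum_{k\leq\beta}\binom{m^3}{k}p^k\left(\frac{eLp}{3\beta}\right)^{3\beta}\leq(\beta+1)\exp\left(\beta+\beta\log\left[\frac{m^3p}{\beta}\Bigl(\frac{Lp}{\beta}\Bigr)^3\right]\right),$$
using monotonicity of $x\mapsto x+x\log(m^3p/x)$ for $x\leq\beta<m^3p/e$. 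Here $m^3p/\beta=\Theta_\eps(\log m/\log\log m)$ and $Lp/\beta=\Theta_\eps(1/\log\log m)$. So the bracket is $\Theta_\eps(\log m/(\log\log m)^4)$, which is large, and the bound fails.

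This is the main obstacle: the one-shot union bound is too weak here, unlike in the integer case, where the extra $(\log m)^{1/2}$ in $p$ helped. To fix it, we take $\alpha=C\beta\log\log m$ with $C$ large. The exponent becomes
$$\beta\log\frac{m^3p}{\beta}+\alpha\log\frac{eLp}{\alpha}\approx\beta\log\log m-C\beta\log\log m\cdot\log\log\log m\to-\infty.$$
Indeed, $Lp/\alpha$ is now $\Theta(1/(C(\log\log m)^2))$, so its logarithm is at most $-2\log\log\log m$ for large $m$, which dominates $\beta\log\log m$. This gives
$$\alpha(G_U)=O_\eps\left(\frac{m^2(\log\log m)^2}{\log m}\right),$$
and dividing by $|U|\geq\eps m^2/2$ yields exactly the claimed ratio $O_\eps((\log\log m)^2/\log m)$. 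This choice of $\alpha$ likely explains the squared $\log\log$ in the statement.
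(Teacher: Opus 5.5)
Your argument is correct and is the paper's container argument: the same supersaturation input from the circle saturation lemma, the same dyadic bound $|S|=O(m^2\log\log m/\log m)$ on fingerprints, and the same union bound using independence of $\{S\subset U\}$ and $T_S\cap U$. The only difference is in the parameter choices. The paper shrinks the containers to $L=\mu m^3$ with $\mu\asymp(\log\log m)^2/\log m$ and applies Chernoff at twice the mean $pL$. You keep $L\asymp m^3/\log m$ and put the target a $(\log\log m)^2$ factor above $pL$, so that the factor $\log\bigl(a/(eLp)\bigr)\asymp\log\log\log m$ in the binomial tail $(eLp/a)^a$ beats the $e^{O(\beta\log\log m)}$ count of fingerprints. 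In a final write-up you should drop the abortive $\alpha=4\beta$ attempt.
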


\begin{proof}
By a simple application of the Chernoff bound, we have $\mathbb{P}(|U|\geq pm^3/2)\geq 0.9$.

Let $c_0$ be the constant hidden by the $\Omega(.)$ notation in \Cref{lemma:circle_saturation}. Let $Q=[m]^3$, $G=G_Q$, 
$$\mu=\frac{1000(\log \log m)^2}{c_0\log m},\quad L=\mu m^3\quad\text{and}\quad \Delta_0=\frac{c_0 L\log m}{m^2}=c_0\mu m\log m.$$ Let $<$ be an arbitrary ordering on the vertices of $G$. Run the Container Algorithm (\Cref{alg:cont}) for every independent set $I$ with parameter $\Delta_0$, thus producing a collection of fingerprints $S$ and accompanying sets $T_S$.

Next, we bound the size of $S$ and $T_S$. Instead of just simply using the bound $|S|\leq |Q|/(\Delta_0+1)$, we establish a stronger upper bound by more delicate counting. 

\begin{claim}
    We have
    $$|S|\leq \frac{8m^2}{c_0\log m}\log \frac{1}{\mu}=:\beta\quad\text{and}\quad |T_S|\leq L.$$
\end{claim}

\begin{proof}
 The Container algorithm starts with $T=V(G)$, and in each step, removes at least one vertex of $T$. In case $|T|\geq L$, we have $\Delta(G[T])\geq \Delta_0$, which ensures $|T_S|\leq L$. However, \Cref{lemma:circle_saturation} ensures the even stronger bound   $$\Delta(G[T])\geq \frac{c_0|T|\log m}{m^2}.$$ This means that every time the algorithm adds a vertex to the set $S$, it removes at least $\frac{c_0|T|\log m}{m^2}$ vertices from $T$. Hence, while the size of $T$ drops from $t$ to $t/2$ for some $t\geq 2L$, at most $\frac{2m^2}{c_0\log m}$ vertices have been added to $S$. This ensures that as the size of $T$ drops from $m^3$ to $L=\mu m^3$, at most  $\frac{4m^2}{c_0\log m}\log \frac{1}{\mu}$ vertices have been added to $S$. On the other hand, if $|T|\leq L$, the algorithm added at most $|T|/\Delta_0=O(m^2/\log m)$ vertices to $S$. Thus, we have
 $$|S|\leq \frac{4m^2}{c_0\log m}\log \frac{1}{\mu}+O\left(\frac{m^2}{\log m}\right)\leq \frac{8m^2}{c_0\log m}\log \frac{1}{\mu}.$$
\end{proof}

Let $\alpha=2\mu m^2$. If $\alpha(G_U)\geq \alpha$, then there exists an independent set $I\subset U$ of size at least $\alpha$, and  a fingerprint $S$ such that $S\subset I\subset T_S\cup S$. As $|S|< \mu m^2=\alpha/2$, the latter implies that 
$$|T_S\cap U|\geq \mu m^2\geq 2p L\geq 2\mathbb{E}|T_S\cap U|.$$ The probability of this event can be bounded by the Chernoff bound as
$$\mathbb{P}(|T_S\cap U|\geq \mu m^2)\leq \exp\left(-\frac{\mu m^2}{6}\right).$$
Hence, applying the union bound over all possible fingerprints, and noting that the events $(S\subset U)$ and $(|T_S\cap U|\geq \mu m^2)$ are independent by the disjointness of $S$ and $T_S$, we get
\begin{align*}
\mathbb{P}(\alpha(G_U)\geq \alpha)&\leq \mathbb{P}(\exists S\text{ fingerprint}:\ S\subset U\text{ and }|T_S\cap U|\geq \mu m^2)\\
&\leq \sum_{S\text{ fingerprint}} p^{|S|} \mathbb{P}(|T_S\cap U|\geq \mu m^2)\\
&\leq \sum_{k=0}^{\lfloor \beta\rfloor}\binom{m^3}{k}p^k \exp\left(-\frac{\mu m^2}{6}\right)\\
&\leq \sum_{k=0}^{\lfloor \beta\rfloor}\exp\left(k+k\log \frac{m^3p}{k}-\frac{\mu m^2}{6}\right).
\end{align*}
The function $x+x\log \frac{m^3p}{x}$ is monotone increasing for $x<m^3p/e=\Theta(m^2)$, so for $k\leq \beta$, the right-hand side can be further bounded as
\begin{align*}
(\beta+1) \exp\left(2\beta\log \frac{m^3p}{\beta}-\frac{\mu m^2}{6}\right)&=(\beta+1) \exp\left(\frac{16m^2}{c_0\log m}\cdot \log\frac{1}{\mu}\cdot \log \frac{c_0\eps \log m}{8\log (1/\mu)}-\frac{\mu m^2}{6}\right)\\
&\leq (\beta+1)\exp\left(\frac{16m^2(\log \log m)^2}{c_0\log m}-\frac{\mu m^2}{6}\right)
\end{align*}
The choice of $\mu$ ensures that the right-hand side is at most $0.1$ for $m$ sufficiently large. Thus, with probability at least $2/3$, we have $\alpha(G_U)\leq \alpha$ and $|U|\geq pm^3/2$, which implies that 
$$\frac{\alpha(G_U)}{|U|}\leq O_{\eps}\left(\frac{(\log \log m)^2}{\log m}\right).$$
\end{proof}

Finally, we show that sampling destroys small cycles. We highlight that if $L$ is a line of the form $\{ty+u:t\in\mathbb{R}\}$, where $y=(r^2,\pm rs,s^2)$ for some integers $(r,s)$, then $L\cap [m]^3$ induces a clique. Thus, if $G_U$ has high girth for some $U\subset [m]^3$, then we must have $|U\cap L|\leq 2$ for all such lines. We will show that this is the main bottleneck to avoid short cycles. In particular, we prove that any two points of $[m]^3$ have few common neighbors outside the line connecting them. We will make use of the following result from arithmetic number theory.

\begin{lemma}{\cite[Lemma 3.5]{VW}}\label{lemma:quadratic}
Let $A,B,C$ be non-zero integers, and let $N$ be a positive integer. Then the number of integer solutions of $$Ax^2+By^2=C$$
such that $|x|,|y|\leq N$ is at most $O_{\eps}((|ABC|N)^{\eps})$ for every $\eps>0$.
\end{lemma}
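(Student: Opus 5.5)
The plan is to reduce the equation to counting elements of fixed norm in a quadratic ring, and then bound that count with the divisor bound $\tau(n)=O_\eps(n^{\eps})$. Multiply $Ax^2+By^2=C$ by $A$ and set $X=Ax$, $Y=y$, $D=-AB\neq 0$, $M=AC\neq 0$. This gives
$$X^2-DY^2=M,\qquad |X|\leq |A|N,\ |Y|\leq N,$$
and distinct solutions $(x,y)$ give distinct pairs $(X,Y)$. So it suffices to count the pairs $(X,Y)$ with these size bounds. I will split into three cases according to the arithmetic of $D$.

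\textbf{Case 1: $D=k^2$ is a perfect square} (so $AB=-k^2$). The equation factors as $(X-kY)(X+kY)=M$. Since $M\neq 0$, the pair $(X-kY,X+kY)$ is an ordered pair of complementary integer divisors of $M$, possibly negative. There are at most $2\tau(|M|)$ such pairs, and each determines $(X,Y)$. Hence the count is $O_\eps(|AC|^{\eps})$.

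\textbf{Case 2: $D<0$.} Let $K=\mathbb{Q}(\sqrt{D})$, an imaginary quadratic field, with ring of integers $\mathcal{O}_K$. Each solution gives an element $\alpha=X+Y\sqrt{D}\in\mathcal{O}_K$ of norm $M$. The principal ideal $(\alpha)$ has norm $|M|$. The number of ideals of norm $|M|$ is at most $\sum_{d\mid |M|}\tau(d)\leq\tau(|M|)^2$, because each rational prime splits into at most two primes of $\mathcal{O}_K$. Each ideal has at most $|\mathcal{O}_K^\times|\leq 6$ generators. This again gives $O_\eps(|M|^{\eps})$ solutions, with no need for the size bound.

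\textbf{Case 3: $D>0$ not a square.} Now $K$ is real quadratic, and this is the step I expect to be the main obstacle, since the unit group is infinite. Fix one of the at most $\tau(|M|)^2$ principal ideals of norm $|M|$, with a generator $\alpha_0$. Every solution in that class has the form $\alpha=\pm\alpha_0\eta^j$, where $\eta>1$ is the fundamental unit. There is a universal lower bound $\eta\geq(1+\sqrt5)/2$.

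The size constraints bound both real embeddings:
$$|\alpha|,|\alpha'|\leq |A|N+\sqrt{D}N\leq 2|AB|N.$$
Since $|\alpha\alpha'|=|M|\geq 1$, we also get $|\alpha|\geq 1/(2|AB|N)$, so $\log|\alpha|$ lies in an interval of length $O(\log(|AB|N))$. Consecutive values of $j$ change $\log|\alpha|$ by $\log\eta\geq\log\frac{1+\sqrt5}{2}$. Therefore each ideal class contributes $O(\log(|AB|N))$ solutions. The total is $O\bigl(\tau(|M|)^2\log(|AB|N)\bigr)=O_\eps\bigl((|ABC|N)^{\eps}\bigr)$.

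Combining the three cases gives the lemma. The only delicate points are checking that distinct $(x,y)$ give distinct elements $\alpha$, which holds because $(X,Y)$ determines $\alpha$, and tracking the dependence on $N$, which enters only through the logarithmic unit count in Case 3.
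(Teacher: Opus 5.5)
Your proof is correct. The paper gives no argument for this statement: it imports it as Lemma~3.5 of \cite{VW} and uses it as a black box in Lemma~\ref{lemma:common_neighbor}. So your write-up is a self-contained replacement for the citation rather than a variant of anything in the text.

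Your route is the standard one for estimates of this kind:
\begin{itemize}
\item pass to the norm equation $X^2-DY^2=M$, with $D=-AB$ and $M=AC$;
\item count ideals of norm $|M|$ via the divisor bound;
\item control the generators of a fixed principal ideal through the unit group.
\end{itemize}
The details hold up:
\begin{itemize}
\item $\mathbb{Z}[\sqrt D]\subseteq\mathcal{O}_K$ holds even when $D$ is not squarefree.
\item In Cases 2 and 3, $(X,Y)$ is recovered from $\alpha$ because $\sqrt D$ is irrational.
\item In Case 3, the bounds $|\alpha|,|\alpha'|\le 2|AB|N$ and $|\alpha\alpha'|=|M|\ge 1$, together with the uniform bound $\eta\ge(1+\sqrt5)/2$, give $O(\log(2|AB|N))$ admissible generators per ideal, with an absolute constant.
\end{itemize}

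What your argument buys over the citation is transparency about the role of $N$. It enters only in the indefinite non-square case, through a logarithmic factor coming from the infinite unit group. The split and definite cases give $O_\eps(|AC|^{\eps})$ independently of $N$.

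All three cases actually occur in the paper's application $D(2x-a)^2+4(ay-bx)^2=Da^2$, where $D=ac-b^2$ is the paper's $D$. There $-AB=-4D$ can be negative, a positive square, or a positive non-square, so your case split is genuinely needed.

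Two cosmetic points:
\begin{itemize}
\item The number of ideals of norm $n$ in a quadratic field is $\sum_{d\mid n}\chi(d)\le\tau(n)$, where $\chi$ is the field's quadratic character. So your $\tau(|M|)^2$ can be improved to $\tau(|M|)$.
\item In Case 3 your final bound $\tau(|AC|)^2\log(2|AB|N)$ is $O_\eps\bigl((|ABC|N)^{2\eps}\bigr)$, so $\eps$ should be rescaled at the end.
\end{itemize}
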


\begin{lemma}\label{lemma:common_neighbor}
Let $u,v\in [m]^3$ be distinct. 
\begin{enumerate}
    \item If $u\not\sim v$, then there are at most $m^{o(1)}$ common neighbors of $u$ and $v$.
    \item If $u\sim v$, then all common neighbors of $u$ and $v$ are on the line spanned by $u$ and $v$.
\end{enumerate} 
\end{lemma}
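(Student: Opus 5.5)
Translate so that $u=0$, and write $v=(a,b,c)\neq 0$. A common neighbor is then a point $w=(x,y,z)$ with $w\in[m]^3-u$ satisfying
$$xz=y^2\quad\text{and}\quad (x-a)(z-c)=(y-b)^2.$$
Subtracting the two equations leaves a single linear condition,
$$cx+az-2by=ac-b^2=F(v).$$
So the common neighbors are the integer points of the cone $xz=y^2$ lying on an affine plane $P$.

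\textbf{Part 2.} Suppose $u\sim v$, so $F(v)=0$ and the plane $P$ passes through the origin. The cone $xz=y^2$ is the set of real multiples of $(r^2,rs,s^2)$. On such a point the linear condition becomes
$$cr^2-2brs+as^2=0.$$
This quadratic form in $(r,s)$ has discriminant $4(b^2-ac)=0$, so it is a perfect square up to scaling and vanishes only along a single ratio $r:s$. That ratio is exactly the direction of $v$, because $v$ itself lies on the cone.

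Hence $P$ is tangent to the cone along the ruling through $v$, and every common neighbor lies on the line through $0$ and $v$. The degenerate cases, such as $a=0$ or $c=0$, need a short separate check, but they behave the same way. Translating back gives Part 2.

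\textbf{Part 1.} Now $F(v)=D\neq 0$ and $P$ misses the apex, so $P\cap$ cone is a conic. I would parametrize its integer points and reduce to \Cref{lemma:quadratic}. One route is to eliminate $z$: if $a\neq 0$, substitute $z=(D-cx+2by)/a$ into $axz=ay^2$ to get
$$-cx^2+2bxy-ay^2+Dx=0.$$
Completing the square then gives an equation of the form
$$AX^2+BY^2=C$$
in new integer variables $X,Y$, which are linear combinations of $x,y$ with coefficients $O(m^{2})$. Here $A,B$ are, up to sign, $c$ and $-D$ (the discriminant), and $C$ is a polynomial in $a,b,c,D$ of size $m^{O(1)}$.

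All of $A,B,C$ are nonzero because $D\neq 0$ and the conic is nondegenerate. \Cref{lemma:quadratic} with $N=m^{O(1)}$ and any $\eps>0$ then bounds the number of solutions by $m^{o(1)}$. Each solution determines $(x,y)$, and $z$ follows from the plane equation, so the common neighbors number at most $m^{o(1)}$.

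\textbf{Main obstacle.} The hardest step is the bookkeeping in the reduction. The diagonalization must be invertible over the integers up to bounded scaling (multiply through by $c$, say), and it must avoid zero coefficients. The cases $c=0$ or $a=0$ need a different pivot variable. If all of $a,c$ vanish then $D=-b^2$, and the equation becomes $xz=y^2$ with $2by=b^2$, which fixes $y$. In that case $xz=y^2$ has $m^{o(1)}$ solutions by the divisor bound.
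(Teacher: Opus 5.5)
Your proposal is correct. Part~1 follows the paper's own route. You subtract the two tangency equations to get the plane $cx+az-2by=D$, eliminate $z$ using $a\neq 0$, and complete squares to reach $AX^2+BY^2=C$. You then apply \Cref{lemma:quadratic}, swap the roles of $x$ and $z$ when $a=0\neq c$, and treat $a=c=0$ via $y=b/2$ and the divisor bound.

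The bookkeeping you flag is lighter if you pivot on $a$, the coefficient you already divided by. Multiplying $-cx^2+2bxy-ay^2+Dx=0$ by $-4a$ gives $D(2x-a)^2+4(ay-bx)^2=Da^2$. So $A=D$, $B=4$, $C=Da^2$ are visibly nonzero, and $(x,y)\mapsto(2x-a,\,ay-bx)$ is injective. No separate $c=0$ case arises; pivoting on $c$ instead gives $C=acD$, which is where that extra case comes from. Also, neither normalization makes $A,B$ equal to $c,-D$ up to sign, though this is immaterial.

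Part~2 is where you genuinely differ from the paper. The paper reads it off the same identity: $D=0$ forces $ay=bx$, the plane then gives $cx=az$, so $w-u=\frac{x}{a}(v-u)$, again with a case split on $a$. You instead parametrize the cone as $t(r^2,rs,s^2)$ and note that the restricted form $cr^2-2brs+as^2$ has discriminant $4(b^2-ac)=0$. That is, $P$ is the tangent plane at $v$ and meets the cone only along the ruling through $v$.

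This is valid, and it needs no case analysis at all, contrary to your remark. One only has to check that the form does not vanish identically, which holds since $v\neq 0$: if $c=0$ then $b=0$ and the form is $as^2$ with $a\neq 0$. The paper's version is more economical, since one computation serves both parts. Yours explains geometrically why common neighbors of adjacent vertices are collinear.
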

\begin{proof}
Let $w\in [m]^3$ be a common neighbor of $u$ and $v$. Write
$$v-u=(a,b,c),\quad w-u=(x,y,z)\quad\text{ and }\quad D=ac-b^2.$$
Note that $D=0$ is equivalent to $u\sim v$. Since $u\sim w$, we have $$xz=y^2.$$
Similarly, $v\sim w$ implies $(x-a)(z-c)=(y-b)^2$, which can be rewritten as
\begin{equation}\label{eq:7.2}
cx+az-2by=D.
\end{equation}
Multiply both sides by $x$ and substitute $xz=y^2$, then we get
$$cx^2+ay^2-2bxy=xD.$$
Assume that $a\neq 0$. Then multiplying by $a$ and completing the square, we get
$$(ac-b^2)x^2+(ay-bx)^2=axD,$$
which we can further write as $Dx^2+(ay-bx)^2=axD$. Moreover, multiplying by 4 and completing the other square, we get
\begin{equation}\label{equ:7.1}
    D(2x-a)^2+4(ay-bx)^2=Da^2.
\end{equation}
If $D\neq 0$, this is an equation of the form $AX^2+BY^2=C$, for which \Cref{lemma:quadratic} applies. Therefore, as the coefficients in this equation are bounded by $O(m^4)$, there are at most $m^{o(1)}$ solutions $(X,Y)$, which give $m^{o(1)}$ solutions $x,y$ of (\ref{equ:7.1}). Thus, $u$ and $v$ have $m^{o(1)}$ common neighbors.

If $D=0$, then (\ref{equ:7.1}) implies $ay=bx$, and (\ref{eq:7.2}) yields $cx=az$. From this,
$w-u=\frac{x}{a}(v-u),$ so the common neighbors of $u$ and $v$  lie on the line spanned by $u$ and $v$.

If $a=0$ and $c\neq 0$, then the same argument works by symmetry. Finally, if $a=c=0$, then $b\neq 0$ (as $u\neq v$). Then $D=-b^2\neq 0$ and (\ref{eq:7.2}) implies $y=b/2$.  But then $xz=y^2$ becomes $xz=b^2/4.$
The number of solutions $(x,z)$ of this equation is the number of divisors of $b^2/4$, which is also $m^{o(1)}$.
\end{proof}

\begin{lemma}\label{lemma:circle_cyclecount}
The number of triangles in $G_{[m]^3}$ is $O(m^5)$. Moreover, for $\ell\geq 4$, the number of induced cycles of length $\ell$ is at most $m^{1+\ell+o(1)}$.
\end{lemma}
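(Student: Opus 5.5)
Both bounds come from \Cref{lemma:common_neighbor}: we fix a few vertices of the cycle and bound the number of ways to complete it using common neighbourhoods.

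\emph{Triangles.} A triangle $u,v,w$ has $u\sim v$, so by part 2 of \Cref{lemma:common_neighbor} all three vertices lie on the line through $u$ and $v$. Fix $u$, which gives $m^3$ choices, and write $v-u=(a,b,c)$ with $ac=b^2$. The whole segment $\{u+t(v-u)\}$ stays in $[m]^3$, so the number of lattice points on this line inside $[m]^3$ is at most about $m/\max(|a|,|b|,|c|)$, up to the factor from the primitive direction. We therefore count pairs $(v-u,w-u)$ of vectors that are parallel, both satisfy $F=0$, and both lie in $[-m,m]^3$.

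Every primitive integer solution of $xz=y^2$ has the form $\pm(r^2,rs,s^2)$ with $\gcd(r,s)=1$, up to signs. Hence the number of pairs is at most
$$\sum_{(r,s)} \left(\frac{m}{\max(r^2,s^2)}\right)^2.$$
The number of primitive directions with $\max(r,s)\sim R$ is $O(R^2)$, and the sum over dyadic ranges of $R^2\cdot m^2/R^4$ converges to $O(m^2)$. Multiplying by the $m^3$ choices of $u$ gives $O(m^5)$ triangles.

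\emph{Induced cycles of length $\ell\ge4$.} Let $v_1,\dots,v_\ell$ be an induced cycle. Then $v_1\not\sim v_3$, since $\ell\ge 4$ and the cycle is induced. We proceed as follows:
\begin{itemize}
\item choose $v_1$ in $m^3$ ways;
\item choose $v_3,v_4,\dots,v_\ell$ one after another, each a neighbour of its predecessor;
\item recover $v_2$ as a common neighbour of the non-adjacent pair $v_1,v_3$, which by part 1 of \Cref{lemma:common_neighbor} leaves $m^{o(1)}$ choices.
\end{itemize}
For the chain to be consistent, $v_4,\dots,v_\ell$ must be neighbours of their predecessors and $v_\ell$ must be adjacent to $v_1$. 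For the degree estimate we need a bound on $\Delta(G_{[m]^3})$. This is the number of solutions of $xz=y^2$ in $[-m,m]^3$, which is $m^{1+o(1)}$: for each $y$ we count divisor pairs, and $y=0$ contributes $O(m)$.

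Counting naively (free $v_3$, then a neighbour at each step) would give $m^{3}\cdot m^3\cdot m^{\ell-3}$, which is too many. So we must bound $v_3$ more carefully. Choose $v_3$ as a vertex at distance two from $v_1$ through the cycle, but do not enumerate $v_2$ first; instead pick $v_\ell,v_{\ell-1},\dots,v_3$ as a walk from $v_1$. That walk has $\ell-2$ steps, each with $m^{1+o(1)}$ choices. The final vertex $v_3$ is determined by the walk, and $v_2$ is then recovered as a common neighbour of $v_1,v_3$ in $m^{o(1)}$ ways.

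The total is $m^3\cdot m^{(\ell-2)(1+o(1))}\cdot m^{o(1)}=m^{\ell+1+o(1)}$, matching the statement. The one step to check is the induced hypothesis: it guarantees $v_1\not\sim v_3$, which is exactly what allows the final vertex to be closed using part 1 of the lemma. The degree bound $m^{1+o(1)}$ is routine via divisor counting.
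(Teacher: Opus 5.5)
Your proposal is correct and follows essentially the same route as the paper. Triangles are confined to lines in directions $(r^2,\pm rs,s^2)$ by part 2 of \Cref{lemma:common_neighbor}, and each direction with $\max(r,s)=q$ contributes $O(m^2/q^4)$ pairs, which sums to $O(m^2)$. Induced $\ell$-cycles are counted as $m^3\cdot\Delta^{\ell-2}\cdot m^{o(1)}$: walk $\ell-2$ steps from $v_1$, then close up with a common neighbour of a non-adjacent pair via part 1. The paper walks $v_1\to\dots\to v_{\ell-1}$ and closes at $v_\ell$, whereas you walk the other way and close at $v_2$.

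The remaining differences are cosmetic. You anchor the triangle count at a fixed vertex rather than counting lines. You bound $\Delta$ by $m^{1+o(1)}$ via divisor counting instead of the paper's $O(m\log m)$, which is equally sufficient. Your initial bullet-list ordering, which chooses $v_3$ freely, is superseded by the walk from $v_1$ and can simply be dropped.
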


\begin{proof}
Let $G=G_{[m]^3}$. First, we bound the number of triangles. By \Cref{lemma:common_neighbor}, the vertices of every triangle $u,v,w\in [m]^3$ lie on a line 
$$\{x+t(r^2,\pm rs,s^2):t\in \mathbb{Z}\},$$ where $x\in [m]^3$ and $0\leq r,s\leq \sqrt{m}$ are coprime. For fixed $(r,s)$ with $\max(r,s)=q$, there are $O(q^2m^2)$ such lines intersecting $[m]^3$, and each such line contains $O(m/q^2)$ elements. Thus, the number of triangles contributed by the union of such lines is $O(m^5/q^4)$. Thus, the total number of triangles in $G$ is at most 
$$\sum_{0\leq s\leq r\leq \sqrt{m}, r\neq 0}O\left(\frac{m^5}{r^4}\right)=O(m^5).$$

Next, we bound the number of induced $\ell$-cycles for $\ell\geq 4$. Write $\Delta$ for the maximum degree of $G$. Given $u\in [m]^{3}$ of degree $\Delta$, a vertex $v$ is a neighbor of $u$ if  $v-u=t(r^2, \pm rs,s^2)$ for some integers $r,s,t$, where $r$ and $s$ are coprime, $t\neq 0$, and $|tr^2|,|ts^2|\leq m$. Given $(r,s)$, the number of choices for $t$ is at most $\frac{2m}{\max(r^2,s^2)}$. Thus, we have
$$\Delta\leq 8\sum_{0\leq s\leq r\leq \sqrt{m}, r\neq 0}\frac{2m}{r^2}=O(m\log m).$$
Count the number of induced cycles  $v_1,\dots,v_{\ell}$ of length $\ell$ as follows. There are $m^3$ choices for $v_1$, and then if $v_{i}$ is already defined for $i=1,\dots,\ell-2$, then there are at most $\Delta$ choices for $v_{i+1}$. However, $v_{\ell}$ is a common neighbor of $v_1$ and $v_{\ell-1}$, and $v_1\not\sim v_{\ell-1}$, thus there are at most $m^{o(1)}$ choices for $v_{\ell}$. Thus, there are at most 
$$m^3\Delta^{\ell-2}m^{o(1)}=m^{\ell+1+o(1)}$$
induced cycles of length $\ell$.
\end{proof}

Now everything is set to prove \Cref{thm:mainG_Q}. Let $m$ be sufficiently large with respect to $g$, let $p=\eps/m$, and let $U$ be a random $p$-sample of $[m]^3$. By \Cref{lemma:circle_independent}, with probability at least $2/3$, we have $|U|\geq \eps m^2/2$ and 
$$\frac{\alpha(G_U)}{|U|}\leq O_{\eps}\left(\frac{(\log \log m)^2}{\log m}\right).$$
For $\ell=3,\dots,g-1$, let $X_{\ell}$ be the number of induced cycles of length $\ell$ in $G_U$. By \Cref{lemma:circle_cyclecount}, we have
$$\mathbb{E}X_{3}\leq O(m^5p^3)=O(\eps^3 m^2),$$
and for $\ell\geq 4$,
$$\mathbb{E}X_{\ell}\leq m^{\ell+1+o(1)}p^{\ell}\leq \eps^{\ell}m^{1+o(1)}.$$
Thus, choosing $\eps$ sufficiently small, we have
$$\mathbb{E}\sum_{\ell=3}^{g-1}X_{\ell}\leq O(\eps^3 m^2)<\frac{\eps m^2}{12}.$$
By Markov's inequality, with probability at least $2/3$, we thus have
$$\sum_{\ell=3}^{g-1}X_{\ell}\leq \frac{\eps m^2}{4}.$$
In conclusion, there is a choice for $U$ which simultaneously satisfies
$$|U|\geq \eps m^2/2\quad\text{and}\quad\sum_{\ell=3}^{g-1}X_{\ell}\leq \frac{\eps m^2}{4}\quad\text{and}\quad\frac{\alpha(G_U)}{|U|}\leq O\left(\frac{(\log \log m)^2}{\log m}\right).$$
But then we can remove a vertex from every induced cycle of length at most $g-1$ to get a set $V\subset U$ of size at least $|U|/2\geq \eps m^2/4$ such that $G_V$ has girth at least $g$. Moreover, we have
$$\frac{\alpha(G_V)}{|V|}\leq \frac{2\alpha(G_U)}{|U|}=O\left(\frac{(\log \log m)^2}{\log m}\right).$$
Setting $n=|V|$, $G_V$ has girth at least $g$ and satisfies $\alpha(G_V)\leq O\left(\frac{(\log \log n)^2}{\log n}n\right)$, finishing the proof of \Cref{thm:mainG_Q}.

\section{Forbidden induced ordered pattern}\label{sect:ordered}

In this section, we prove \Cref{thm:main_ordered}. Let $K$ and $L$ be two triangle- and cross-path-free ordered graphs. We prove that by a certain randomized graph operation, we can create a new triangle- and cross-path-free ordered graph by combining $K$ and $L$. 

Given an integer $b$, let $K^{(b)}$ denote the $b$ blow-up of $K$. Formally, we replace every vertex $v\in V(K)$ by an independent interval $I_v$ of size $b$ such that if $v<w$ are two vertices of $K$, then $I_v<I_w$. Moreover, if $vw\in E(K)$, then we put a complete bipartite graph between $I_v$ and $I_w$, otherwise, there is no edge between the two intervals. 

\begin{lemma}
 $K^{(b)}$ is triangle- and cross-path-free.
 \end{lemma}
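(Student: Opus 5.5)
The plan is to use the projection $\pi:V(K^{(b)})\to V(K)$ sending each vertex of $I_v$ to $v$. Two facts drive everything. First, $\pi$ is weakly order-preserving: if $x<y$ in $K^{(b)}$ then $\pi(x)\le\pi(y)$, because the intervals $I_v$ are placed in the order of $K$. Second, $xy$ is an edge of $K^{(b)}$ if and only if $\pi(x)\neq\pi(y)$ and $\pi(x)\pi(y)\in E(K)$; this holds since each $I_v$ is independent and blocks are joined completely or not at all.

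For triangle-freeness, suppose $x,y,z$ span a triangle in $K^{(b)}$. Each pair of them is adjacent, so their images lie in pairwise distinct blocks. Hence $\pi(x),\pi(y),\pi(z)$ are three distinct vertices of $K$, pairwise adjacent by the second fact. That is a triangle in $K$, a contradiction.

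For cross-path-freeness, suppose $a<b'<c<d$ in $K^{(b)}$ with $ac,ad,b'd\in E$ and $b'c\notin E$. Set $\alpha=\pi(a)$, $\beta=\pi(b')$, $\gamma=\pi(c)$, $\delta=\pi(d)$, so $\alpha\le\beta\le\gamma\le\delta$.
\begin{itemize}
\item The edges $ac$ and $ad$ give $\alpha\neq\gamma$ and $\alpha\neq\delta$, and the edge $b'd$ gives $\beta\neq\delta$.
\item If $\alpha=\beta$, then $b'$ lies in the same block as $a$, so it has the same neighbourhood. Since $ac$ is an edge, $b'c$ would be an edge, a contradiction. So $\alpha<\beta$.
\item If $\gamma=\delta$, then $c$ lies in the same block as $d$. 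Since $b'd$ is an edge, $b'c$ would be an edge, a contradiction. So $\gamma<\delta$.
\item If $\beta=\gamma$, then $\alpha\beta,\alpha\delta,\beta\delta\in E(K)$ with $\alpha,\beta,\delta$ distinct, which is a triangle in $K$, impossible.
\end{itemize}
Therefore $\alpha<\beta<\gamma<\delta$. Now $\alpha\gamma,\alpha\delta,\beta\delta\in E(K)$, while $\beta\gamma\notin E(K)$, because otherwise $b'c$ would be an edge. This is a cross-path in $K$, a contradiction.

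The only delicate point is the case analysis for coinciding blocks in the cross-path argument. It is handled exactly as above: two cases by equal neighbourhoods within a block, and the case $\beta=\gamma$ by the triangle-freeness of $K$.
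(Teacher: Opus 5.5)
Your proof is correct and follows the paper's argument essentially step by step: twin vertices within a block force $\pi(a)<\pi(b')$ and $\pi(c)<\pi(d)$, the case $\pi(b')=\pi(c)$ produces a triangle in $K$, and otherwise the projection is a cross-path in $K$. The only addition is that you write out the triangle-freeness argument, which the paper leaves as easy to see.
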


 \begin{proof}
 It is easy to see that $K^{(b)}$ is triangle-free. Therefore, assume to the contrary that it is not cross-path-free, then there exists $u_1<u_2<u_3<u_4\in V(K^{(b)})$ such that $u_1u_3,u_1u_4,u_2u_4\in E(K^{(b)})$, but $u_2u_3\not\in E(K^{(b)})$. Assume that $u_i$ is in the blow-up interval of the vertex $v_i\in V(K)$. As the neighborhoods of $u_1$ and $u_2$ are different, we have $v_1<v_2$. Similarly, $v_3<v_4$. Also, we have $v_1v_3,v_1v_4,v_2v_4\in E(K)$. If $v_2=v_3$, then $K$ contains a triangle, contradiction. If $v_2<v_3$, then $u_2u_3\not\in E(K^{(b)})$ implies that $v_2v_3\not\in E(K)$, so $v_1,v_2,v_3,v_4$ contradict that $K$ is cross-path-free.
 \end{proof}

Let $k=|V(K)|$ and $\ell=|V(L)|$, write
$$\rho_K=\frac{k}{\alpha(K)}\quad\text{and}\quad\rho_L=\frac{\ell}{\alpha(L)},$$
and fix some positive integers $a$ and $b$.

Define the random graph $G=G(K,L,a,b)$ as follows. The vertex set of $G$ is divided into $\ell$ intervals $J_w$ of size $abk$, indexed by the vertices $w$ of $L$. Moreover, each $J_w$ induces $a$ disjoint copies of $K^{(b)}$ in $G$, denoted by $K_{w,1},\dots,K_{w,a}$, such that each copy occupies an interval. Let $\tau:V(G)\rightarrow [kb]$ be a labeling of the vertices of $G$ such that for every $w\in V(L)$ and $p\in [a]$, $\tau|_{V(K_{w,p})}$ is a random permutation of $[kb]$, chosen independently for every $(w,p)$. Given two vertices $u,v\in V(G)$ such that $u\in J_w$ and $v\in J_z$, connect $u$ and $v$ by an edge if $\tau(u)=\tau(v)$ and $wz$ is an edge of $L$. The resulting graph is $G$. Write 
$$n=abk\ell$$
for the number of vertices of $G$. See \Cref{fig:2} for an illustration of $G$.

\begin{figure}[ht]
\begin{center}
\begin{tikzpicture}[
    x=3.2mm,
    y=3.2mm,
    vertex/.style={circle,fill=black,inner sep=1.4pt},
    downedge/.style={thin,opacity=.7},
    every label/.style={font=\scriptsize,inner sep=1pt}
]


\foreach \i/\x/\lab in {
1/0/4,2/1/1,3/2/6,4/3/2,5/4/5,6/5/3,
7/7/2,8/8/5,9/9/1,10/10/6,11/11/3,12/12/4,
13/16/6,14/17/3,15/18/5,16/19/1,17/20/4,18/21/2,
19/23/1,20/24/4,21/25/2,22/26/5,23/27/6,24/28/3,
25/32/5,26/33/2,27/34/4,28/35/3,29/36/1,30/37/6,
31/39/3,32/40/6,33/41/1,34/42/4,35/43/2,36/44/5
}{
\node[vertex,label=below:{\lab}] (v\i) at (\x,0) {};
}

\draw (0,-1.5) -- (0,-2) -- (12,-2) -- (12,-1.5);
\node at (6,-3) {$J_1$};

\draw (16,-1.5) -- (16,-2) -- (28,-2) -- (28,-1.5);
\node at (22,-3) {$J_2$};

\draw (32,-1.5) -- (32,-2) -- (44,-2) -- (44,-1.5);
\node at (38,-3) {$J_3$};

\node at (2.5,2) {$K_{1,1}$};
\node at (9.5,2) {$K_{1,2}$};

\node at (18.5,2) {$K_{2,1}$};
\node at (25.5,2) {$K_{2,2}$};

\node at (34.5,2) {$K_{3,1}$};
\node at (41.5,2) {$K_{3,2}$};

\draw (v1) to[bend left=40] (v3);
\draw (v1) to[bend left=40] (v4);
\draw (v2) to[bend left=10] (v3);
\draw (v2) to[bend left=40] (v4);

\draw (v3) to[bend left=40] (v5);
\draw (v3) to[bend left=40] (v6);
\draw (v4) to[bend left=10] (v5);
\draw (v4) to[bend left=40] (v6);

\draw (v7) to[bend left=40] (v9);
\draw (v7) to[bend left=40] (v10);
\draw (v8) to[bend left=10] (v9);
\draw (v8) to[bend left=40] (v10);

\draw (v9) to[bend left=40] (v11);
\draw (v9) to[bend left=40] (v12);
\draw (v10) to[bend left=10] (v11);
\draw (v10) to[bend left=40] (v12);

\draw (v13) to[bend left=40] (v15);
\draw (v13) to[bend left=40] (v16);
\draw (v14) to[bend left=10] (v15);
\draw (v14) to[bend left=40] (v16);

\draw (v15) to[bend left=40] (v17);
\draw (v15) to[bend left=40] (v18);
\draw (v16) to[bend left=10] (v17);
\draw (v16) to[bend left=40] (v18);

\draw (v19) to[bend left=40] (v21);
\draw (v19) to[bend left=40] (v22);
\draw (v20) to[bend left=10] (v21);
\draw (v20) to[bend left=40] (v22);

\draw (v21) to[bend left=40] (v23);
\draw (v21) to[bend left=40] (v24);
\draw (v22) to[bend left=10] (v23);
\draw (v22) to[bend left=40] (v24);

\draw (v25) to[bend left=40] (v27);
\draw (v25) to[bend left=40] (v28);
\draw (v26) to[bend left=10] (v27);
\draw (v26) to[bend left=40] (v28);

\draw (v27) to[bend left=40] (v29);
\draw (v27) to[bend left=40] (v30);
\draw (v28) to[bend left=10] (v29);
\draw (v28) to[bend left=40] (v30);

\draw (v31) to[bend left=40] (v33);
\draw (v31) to[bend left=40] (v34);
\draw (v32) to[bend left=10] (v33);
\draw (v32) to[bend left=40] (v34);

\draw (v33) to[bend left=40] (v35);
\draw (v33) to[bend left=40] (v36);
\draw (v34) to[bend left=10] (v35);
\draw (v34) to[bend left=40] (v36);

\end{tikzpicture}

\end{center}
\vspace{-10pt}
\caption{An illustration of $G$, where $a=b=2$, and $K=L$ are the 2-edge ordered path. Each interval of length six is a copy of $K^{(2)}$, and the labels denote the random assignment $\tau$. There are further edges connecting vertices of the same label between the pairs $(J_1,J_2)$ and $(J_2,J_3)$, which are not drawn for clarity.}
\label{fig:2}
\end{figure}

\begin{lemma}
$G$ is triangle- and cross-path-free.
\end{lemma}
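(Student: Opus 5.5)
The plan is to read $G$ through the projection $\pi:V(G)\to V(L)$, where $\pi(u)=w$ for $u\in J_w$. This map is weakly order-preserving, since $J_w<J_z$ whenever $w<z$. Every edge $uv$ of $G$ is of one of two kinds.

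\emph{Internal edges} have $\pi(u)=\pi(v)$. Then $u,v$ lie in the same copy $K_{w,p}$, because distinct copies inside $J_w$ span no edges. Moreover $\tau(u)\neq\tau(v)$, since $\tau$ is a bijection on each copy.

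\emph{External edges} have $\pi(u)\pi(v)\in E(L)$ and $\tau(u)=\tau(v)$.

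Two facts will be used repeatedly. (i) Two distinct vertices of the same $J_w$ with equal labels lie in different copies. (ii) Since the copies occupy intervals, any vertex lying between two vertices of $K_{w,p}$ in the ordering also belongs to $K_{w,p}$.

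\textbf{Triangle-freeness.} Let $u,v,x$ span a triangle and split by how many distinct values $\pi$ takes on them.
\begin{itemize}
\item If $\pi$ takes one value, all three edges are internal, so the triangle lies in one copy of $K^{(b)}$. This contradicts the lemma that $K^{(b)}$ is triangle-free.
\item If $\pi$ takes three values, all three edges are external, and their projections form a triangle in $L$, a contradiction.
\item If $\pi$ takes two values, say $\pi(u)=\pi(v)\neq\pi(x)$, then $uv$ is internal, so $\tau(u)\neq\tau(v)$. But $ux$ and $vx$ are external, so $\tau(u)=\tau(x)=\tau(v)$, a contradiction.
\end{itemize}

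\textbf{Cross-path-freeness.} Let $a<b<c<d$ with $ac,ad,bd\in E(G)$ and $bc\notin E(G)$, so that $\pi(a)\le\pi(b)\le\pi(c)\le\pi(d)$. We split on which of the three edges are internal.

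\emph{Case $ad$ internal.} Then all four vertices lie in $J_{\pi(a)}$, so all three edges are internal. Hence $a,c,d$ lie in one copy, and $b$ lies in the same copy by (ii), via the edge $bd$. This gives a cross-path in $K^{(b)}$, contradicting the blow-up lemma.

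So from now on $ad$ is external and $\tau(a)=\tau(d)$.

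\emph{Case $ac$ internal.} Then $\pi(a)=\pi(b)=\pi(c)$ and $c\in K_{\pi(a),p}$ together with $a$. By (ii), $b$ is in that copy too. On the other hand $bd$ must be external (as $\pi(b)<\pi(d)$), so $\tau(b)=\tau(d)=\tau(a)$. Then $a,b$ are distinct vertices of one copy with equal labels, contradicting (i).

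\emph{Case $bd$ internal.} This is symmetric: $c$ is squeezed between $b$ and $d$ in a single copy of $J_{\pi(d)}$. Since $ac$ is external, $\tau(c)=\tau(a)=\tau(d)$, which is again impossible by (i).

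\emph{Case all three edges external.} Then $\tau(a)=\tau(b)=\tau(c)=\tau(d)$, and $\pi(a)\pi(c),\pi(a)\pi(d),\pi(b)\pi(d)\in E(L)$.
\begin{itemize}
\item If $\pi(b)=\pi(c)$, these three edges form a triangle on $\pi(a),\pi(b),\pi(d)$ in $L$, a contradiction.
\item Otherwise $\pi(b)<\pi(c)$. Since the labels of $b$ and $c$ agree, $bc\notin E(G)$ forces $\pi(b)\pi(c)\notin E(L)$.
\begin{itemize}
\item If $\pi(a)=\pi(b)$, then $\pi(b)\pi(c)=\pi(a)\pi(c)\in E(L)$, a contradiction.
\item If $\pi(c)=\pi(d)$, then $\pi(b)\pi(c)=\pi(b)\pi(d)\in E(L)$, also a contradiction.
\item Hence $\pi(a)<\pi(b)<\pi(c)<\pi(d)$, and these four vertices form a cross-path in $L$, a contradiction.
\end{itemize}
\end{itemize}

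The only delicate points are the mixed cases, where one of $ac$, $bd$ is internal. There the interval structure of the copies, fact (ii), is what rules out the configuration.
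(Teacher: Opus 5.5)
Your proof is correct and takes essentially the same approach as the paper. The triangle case uses the same three-way split. For cross-paths, you make the same reductions: to the triangle- and cross-path-freeness of $K^{(b)}$ and $L$, to the bijectivity of $\tau$ on each copy, and to the interval structure of the copies. Splitting by which of $ac,ad,bd$ are internal simply regroups the paper's eight equality patterns of $w_1\le w_2\le w_3\le w_4$; for example, ``$bd$ internal'' is exactly the case $w_1<w_2=w_3=w_4$. One small point: naming a vertex $b$ clashes with the blow-up parameter in $K^{(b)}$.
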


\begin{proof}
First, we show that $G$ is triangle-free. Assume to the contrary that $u_1<u_2<u_3$ are vertices of a triangle. Then $u_1,u_2,u_3$ cannot be contained in the same interval $J_w$ as $K^{(b)}$ is triangle-free. Also, if two of them are contained in the same interval, but not the third, then $\tau(u_1)=\tau(u_2)=\tau(u_3)$. However, two vertices in the same interval and assigned the same number are not connected by an edge. Finally, it cannot be that all three vertices are contained in different intervals as $L$ is triangle-free.

Next, we prove that $G$ is cross-path-free. Assume to the contrary that $u_1<u_2<u_3<u_4$ are vertices of $G$ such that $u_1u_3,u_1u_4,u_2u_4$ are edges, but $u_2u_3$ is not an edge. Assume that $u_i$ is contained in the interval $J_{w_i}$ for $i\in [4]$, then $w_1\leq w_2\leq w_3\leq w_4$. We analyze 8 different cases depending on the equalities between $w_1,w_2,w_3,w_4$.
\begin{description}
    \item[$w_1=w_2=w_3=w_4$:] we get a contradiction as $K^{(b)}$ is cross-path-free.
    \item[$w_1<w_2=w_3=w_4$:] $u_1u_3$ and $u_1u_4$ are edges across intervals, which forces $\tau(u_1)=\tau(u_3)=\tau(u_4)$. However, as $u_2<u_3<u_4$ are contained in $J_{w_2}$, this forces $u_3$ and $u_4$ to be in different copies  of $K^{(b)}$, and thus $u_2$ and $u_4$ are also in different copies. But then there is no edge between $u_2$ and $u_4$, contradiction.
    \item[$w_1=w_2<w_3=w_4$:]  We have $\tau(u_1)=\tau(u_2)=\tau(u_3)=\tau(u_4)$, and $w_1w_3\in E(L)$, which implies that $u_2u_3$ is also an edge of $G$, contradiction. 
    \item[$w_1=w_2=w_3<w_4$:] analogous to the case $w_1<w_2=w_3=w_4$.
    \item[$w_1<w_2<w_3=w_4$:] as $u_1u_4$, $u_2u_4$, $u_1u_3$ are edges across intervals, we must have $\tau(u_1)=\tau(u_2)=\tau(u_3)=\tau(u_4)$. Moreover $w_1w_3\in E(L)$ and $w_2w_3=w_2w_4\in E(L)$, which forces $u_2u_3$ to be an edge, contradiction.
    \item[$w_1<w_2=w_3<w_4$:] we get a contradiction as $L$ is triangle-free.
    \item[$w_1=w_2<w_3<w_4$:] analogous to the case $w_1<w_2<w_3=w_4$.
    \item[$w_1<w_2<w_3<w_4$:] we get a contradiction as $L$ is cross-path-free.
\end{description}
\end{proof}

\begin{lemma}
Assume that 
$$a\geq \rho_K\rho_L^2\quad\text{and}\quad b\geq 2\rho_K^2\rho_L^2.$$
Then with positive probability,
$$\alpha(G)\leq \frac{4n}{\rho_K\rho_L}.$$
\end{lemma}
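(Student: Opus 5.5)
The idea is to see $G$ in two ways and combine the two resulting bounds. For a fixed label $t\in[kb]$, the vertices with $\tau$-value $t$ form a copy of $L^{(a)}$. Each $J_w$ contributes $a$ such vertices, one per copy $K_{w,p}$, and the only edges among them are the complete bipartite graphs between $J_w$ and $J_z$ for $wz\in E(L)$. Inside a single copy $K_{w,p}$, by contrast, we see $K^{(b)}$.

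Fix an independent set $I$ and split it into its parts inside each copy.

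\emph{Step 1: the part inside one copy.} The set $I\cap V(K_{w,p})$ is independent in $K^{(b)}$. Hence the set $A_{w,p}$ of vertices $v\in V(K)$ whose blow-up interval $I_v$ meets it is independent in $K$. So $I\cap V(K_{w,p})\subseteq \bigcup_{v\in A_{w,p}} I_v$, and this union has size at most $b\alpha(K)=bk/\rho_K$. Let $T_{w,p}\subseteq [kb]$ be the set of $\tau$-labels of this union. Since $\tau|_{V(K_{w,p})}$ is a uniform random permutation and $A_{w,p}$ ranges over at most $2^k$ sets, $T_{w,p}$ is, for each fixed $A_{w,p}$, a uniformly random subset of $[kb]$ of size $b|A_{w,p}|\le bk/\rho_K$.

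\emph{Step 2: the part with one label.} Put $c_{w,t}=\#\{p\in[a]: t\in T_{w,p}\}\le a$. For each $t$, the set $W_t$ of those $w$ for which $I$ contains a vertex of $J_w$ with label $t$ is independent in $L$, so $|W_t|\le \ell/\rho_L$. This gives
$$|I|\le \sum_{t\in[kb]}\sum_{w\in W_t} c_{w,t}.$$
Call a pair $(w,t)$ \emph{heavy} if $c_{w,t}\ge 2a/\rho_K$. The light pairs contribute at most $kb\cdot(\ell/\rho_L)\cdot 2a/\rho_K=2n/(\rho_K\rho_L)$. The heavy pairs contribute at most $a$ times their number. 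It therefore suffices to show that, with positive probability, simultaneously for every $w$ and every choice of the sets $A_{w,1},\dots,A_{w,a}$, the number of heavy $t$ is less than
$$h:=\frac{2kb}{\rho_K\rho_L}.$$
Then the heavy pairs contribute at most $\ell\cdot h\cdot a=2n/(\rho_K\rho_L)$, and altogether $|I|\le 4n/(\rho_K\rho_L)$.

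\emph{Step 3: the probabilistic estimate (the main obstacle).} Fix $w$, the sets $A_{w,1},\dots,A_{w,a}$, and a set $H\subseteq[kb]$ with $|H|=h$. If every $t\in H$ is heavy, then $\sum_{t\in H}c_{w,t}\ge 2ah/\rho_K$. The left-hand side is a sum over $p$ of independent hypergeometric counts $|T_{w,p}\cap H|$, each with mean at most $h/\rho_K$. It is therefore a sum of $ah$ negatively correlated indicators with mean at most $ah/\rho_K$. A Chernoff bound, whose multiplicative form remains valid under negative correlation / for hypergeometric variables, gives probability at most $\exp(-ah/(3\rho_K))$. We then take a union bound over at most $\binom{kb}{h}\le (e\rho_K\rho_L)^{h}$ choices of $H$, over $2^{ka}$ choices of the $A_{w,p}$, and over the $\ell$ choices of $w$. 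We need
$$\frac{ah}{3\rho_K}> h\log(e\rho_K\rho_L)+ka\log 2+\log \ell .$$
The term $ka$ is absorbed because $h/\rho_K=2kb/(\rho_K^2\rho_L)\ge 4k\rho_L$, which uses $b\ge 2\rho_K^2\rho_L^2$. The term $h\log(e\rho_K\rho_L)$ is absorbed because $a\ge\rho_K\rho_L^2$ makes $a/\rho_K$ large compared with $\log(\rho_K\rho_L)$. I expect the delicate point to be exactly this bookkeeping of constants, possibly requiring a slightly different threshold for ``heavy'' or a slightly different $h$. The structure of the argument, with the union bound over the $2^{ka}$ containers per $w$ rather than over all independent sets, is what makes the given hypotheses on $a$ and $b$ suffice.
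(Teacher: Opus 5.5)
Your Steps 1 and 2 are correct. The bound $|I|\le\sum_{t}\sum_{w\in W_t}c_{w,t}$ with $|W_t|\le\alpha(L)$ holds, and the light pairs do contribute at most $2n/(\rho_K\rho_L)$. The gap is in Step 3, and it is not a matter of constants.

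The hypothesis $a\ge\rho_K\rho_L^2$ only gives $a/\rho_K\ge\rho_L^2$. This does not grow with $\rho_K$, so it does not make $a/\rho_K$ large compared with $\log(\rho_K\rho_L)$. In fact the statement you reduce to (fewer than $h$ heavy labels for every $w$) is false in general. Let $L$ be a single edge, so $\rho_L=2$, take $a=\lceil 4\rho_K\rceil$ with $\rho_K$ large, and let every $A_{w,p}$ be a maximum independent set of $K$. Then each $c_{w,t}$ is $\mathrm{Bin}(a,1/\rho_K)$, essentially Poisson with mean $4$. So a constant fraction (about $5\%$) of the labels are heavy with high probability, whereas $h=kb/\rho_K$.

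Moving the threshold to $\lambda a/\rho_K$ with $\lambda<4$, and $h$ to $(4-\lambda)kb/(\rho_K\rho_L)$, does not help. As long as each heavy pair is charged the worst case $a$, you need the heavy fraction to be $O(1/(\rho_K\rho_L))$. That requires $a/\rho_K=\Omega(\log(\rho_K\rho_L))$, which the hypotheses do not give. Your bookkeeping does close when $\rho_K=\rho_L=\rho>8$, since then $\rho^2/3$ dominates $\log(e\rho^2)$. That is the only case used later in the paper, but the lemma is stated for arbitrary $K$ and $L$.

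The missing idea is to control the excess above the mean on average rather than through a threshold. Write $\mu=a/\rho_K$ and $N_{w,i}$ for your $c_{w,i}$ (the paper's $t$ is this $\mu$). The paper uses
$$\max_{W\subset V(L)\text{ independent}}\ \sum_{w\in W}N_{w,i}\le\alpha(L)\mu+\sum_{w\in V(L)}|N_{w,i}-\mathbb{E}N_{w,i}|.$$
By the variance bound, the expectation of the right-hand side is at most $\alpha(L)\mu+\ell\sqrt{\mu}$. The inequality $\ell\sqrt{\mu}\le\alpha(L)\mu$ is exactly the condition $\mu\ge\rho_L^2$, which is what $a\ge\rho_K\rho_L^2$ is designed for.

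Concentration is then proved for the global sum $Z$ over all labels, not label by label or vertex by vertex. Azuma's inequality, with differences at most $2$ over $n$ exposure steps, gives failure probability $\exp(-n/(2\rho_K^2\rho_L^2))$. This is played against a single union bound over all $2^{ak\ell}=2^{n/b}$ container tuples, and that is exactly what $b\ge 2\rho_K^2\rho_L^2$ pays for.
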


\begin{proof}
Let $\mathcal{I}=(I_{w,p})_{w\in V(L),p\in [a]}$ be a tuple, where $I_{w,p}$ is an independent set in $K_{w,p}$ such that if some vertex is contained in $I_{w,p}$, then all its blow-up twins are also contained in $I_{w,p}$. Define  $S(\mathcal{I})=\bigcup_{w\in V(L), p\in [a]} I_{w,p}$. Note that there are at most $2^{k}$ choices for the set $I_{w,p}$ for every fixed $(w,p)$, so the number of tuples $\mathcal{I}$ can be upper bounded as 
$$2^{ak\ell}=2^{n/b}.$$ Clearly, every independent set of $G$ is contained in some $S(\mathcal{I})$.

Next, we reveal our random assignment $\tau$, and thus the random edges across the intervals $(J_w)_{w\in V(L)}$. Fix some tuple $\mathcal{I}$ and set $S=S(\mathcal{I})$. For $w\in V(L)$ and $i\in [kb]$, let $N_{w,i}$ be the number of vertices of $S(\mathcal{I})\cap J_w$ labeled $i$. Define
$$Z_i=\max_{\substack{I\subset V(L)\\I\text{ is independent}}} \sum_{w\in I} N_{w,i}$$
and let $Z=Z_1+\dots+Z_{kb}$. Then $Z$ is an upper bound on the maximum size of an independent set of $G[S]$. Next, we bound the expectation of $Z$. We have
\begin{equation}\label{equ:order1}
    \mathbb{E}N_{w,i}\leq \frac{\alpha(K)}{k}a=:t.
\end{equation}
Indeed,  $|S\cap V(K_{w,p})|\leq b\alpha(K)$, and each vertex of $S\cap V(K_{w,p})$ receives the label $i$ with probability $1/(bk)$.  Moreover, as $N_{w,i}$ is a sum of independent indicator random variables, we also have
$$\Var(N_{w,i})\leq \mathbb{E}N_{w,i}\leq  t.$$
For every independent set $I$ of $L$, 
$$\sum_{w\in I}N_{w,i}\leq \sum_{w\in I}(t+|N_{w,i}-\mathbb{E}N_{w,i}|)\leq t\alpha(L) + \sum_{w\in I} |N_{w,i}-\mathbb{E}N_{w,i}|\leq t\alpha(L) + \sum_{w\in V(L)} |N_{w,i}-\mathbb{E}N_{w,i}|.$$
Thus, we also have
\begin{equation}\label{equ:order2}
Z_i\leq t\alpha(L)+\sum_{w\in V(L)} |N_{w,i}-\mathbb{E}N_{w,i}|.
\end{equation}
Here,
$$\mathbb{E}|N_{w,i}-\mathbb{E}N_{w,i}|\leq \sqrt{\Var(N_{w,i})}\le \sqrt{t}.$$
Taking the expectation of both sides of (\ref{equ:order2}), we get
$$\mathbb{E}Z_i\leq \alpha(L)t+\ell\sqrt{t}.$$
By our lower bound $a\geq (k/\alpha(K))\cdot(\ell/\alpha(L))^2$, the right-hand side is at most $2\alpha(L)t$. Thus, we conclude
$$\mathbb{E}Z\leq 2kbt\alpha(L)=2ab\cdot\alpha(K)\alpha(L)=\frac{2n}{\rho_K\rho_L}.$$

Next, we argue that $Z$ is concentrated around its mean. For $j=0,\dots,n$, define the martingale
$$M_j=\mathbb{E}(Z|\tau\text{ is revealed on the first }j\text{ vertices of }G).$$
The main observation is that $|M_j-M_{j+1}|\leq 2$, because for every $(w,p)$, the assignment $\tau|_{V(K_{w,p})}$ is an independent random permutation of $[kb]$, and swapping any two elements of the permutation changes the value of $Z$ by at most $2$. Hence, we can apply the Azuma-Hoeffding inequality to get that
$$\mathbb{P}\left(Z-\mathbb{E}Z\geq \frac{2n}{\rho_K\rho_L}\right)\leq \exp\left(-\frac{n}{2\rho_K^2\rho_L^2}\right).$$
Thus, the probability that $S$ contains an independent set of size at least $4n/(\rho_K\rho_L)$ is at most $\exp(-n/(2\rho_K^2\rho_L^2))$. However, recall that the number of choices for $S$ is at most $2^{n/b}$, thus, by the union bound,
$$\mathbb{P}\left(\alpha(G)\geq \frac{4n}{\rho_K\rho_L}\right)\leq 2^{n/b}\exp\left(-\frac{n}{2\rho_K^2\rho_L^2}\right)$$
Our lower bound $b\geq 2\rho_K^2\rho_L^2$ ensures that the right-hand side is strictly less than 1, finishing the proof.
\end{proof}

Let $G_0$ be a triangle- and cross-path-free ordered graph such that $\rho_0=|V(G_0)|/\alpha(G_0)>8$. Such a graph exists: as discussed in the introduction, if $P$ is a planar point set, then the Hasse-diagram of the poset induced by $P$ is triangle- and cross-path-free, assuming the points are ordered by their first coordinates. As proved in \cite{CPST,JKL,TomDel}, there are such graphs $G_0$ with $|V(G_0)|/\alpha(G_0)$ arbitrarily large.

Assuming $G_i$ is already defined with $n_i$ vertices and $\rho_i=\frac{n_i}{\alpha(G_i)}$, let $a_i=\lceil \rho_i^3\rceil $ and $b_i=\lceil 2\rho_i^4\rceil$. Let $G_{i+1}$ be an outcome of the random graph $G(G_i,G_i,a_i,b_i)$ for which $$\alpha(G_{i+1})\leq \frac{4|V(G_{i+1})|}{\rho_i^2}.$$
Then we have the recursion
$$n_{i+1}\leq 6\rho_i^7 n_i^2\quad\text{and}\quad\rho_{i+1}\geq \frac{\rho_i^2}{4}.$$
Moreover, by induction, $G_{i+1}$ is a triangle- and cross-path-free ordered graph.
By our assumption $\rho_0>8$, we get an infinite sequence of graphs $G_0,G_1,\dots$ with $\rho_0<\rho_1<\dots$. 
Let 
$$A_i=\log_2 n_i\quad\text{and}\quad B_i=\log_2 (\rho_i/4).$$
Since  $\rho_0>8$, we have $B_0>0$ and 
$$B_{i+1}\geq 2B_i\quad\text{and}\quad A_{i+1}\leq 2A_i+7B_i+17.$$
Iterating these inequalities, for $j<i$, we get
$$B_{j}\leq 2^{j-i}B_i\quad\text{and}\quad A_i\leq 2^i (A_0+17)+7\sum_{j=0}^{i-1}2^{i-j-1}B_{j}.$$
Thus,
$$A_i\leq \left(\frac{A_0+17}{B_0}+\frac{7i}{2}\right)B_i.$$
Finally, as $2^iB_0\leq B_i\leq A_i$, we have $i\leq \log_2 A_i=\log_2\log_2 n_i$. Thus, the previously highlighted inequality shows that 
$$\rho_i=4\cdot 2^{B_i}\geq 2^{\Omega(\log n_i/\log\log n_i)},$$ finishing the proof of \Cref{thm:main_ordered}.



\section*{Statements and Declarations}
\vspace{-10pt}
No financial or non-financial interests are directly or indirectly related to the work submitted for publication.
\vspace{-10pt}
\section*{Data availability}
\vspace{-10pt}
There is no data associated with the work.
\vspace{-10pt}
\section*{Acknowledgments}
\vspace{-10pt}
IT acknowledges the support of the Swedish Research Council grant VR 2023-03375. We are grateful to James Davies, who pointed out the problem of Erd\H{o}s \cite{Erdos97} on integer distance graphs, and to Bartosz Walczak for valuable discussions.

\end{document}